\documentclass{article}
\usepackage[utf8]{inputenc}
\usepackage{fullpage}
\usepackage{amsmath}
\usepackage{amsfonts}
\usepackage{amssymb}
\usepackage{tikz-cd}
\usepackage{color,soul}
\usepackage{array}
\usepackage{zref-savepos}
\usepackage{amsthm}
\usepackage{multirow, bigstrut}
\usepackage[font={small,it}]{caption}
\usepackage{tikz}
\usepackage{amsmath}
\usetikzlibrary{arrows}

\usepackage{hyperref}
\hypersetup{
    colorlinks=true,
    linkcolor=blue,
    filecolor=magenta,      
    urlcolor=cyan,
}

\usetikzlibrary{calc}
\usepackage{comment}
\usepackage{enumerate}
\usepackage{tikz}
\usetikzlibrary{shapes.geometric}

\newtheorem{theorem}{Theorem}[section]
\newtheorem*{theorem*}{Theorem}
\newtheorem{lemma}[theorem]{Lemma}
\newtheorem{proposition}[theorem]{Proposition}
\newtheorem{corollary}[theorem]{Corollary}
\newtheorem{conjecture}[theorem]{Conjecture}
\newtheorem{counterexample}[theorem]{Counterexample}

\theoremstyle{definition}
\newtheorem{definition}[theorem]{Definition}

\theoremstyle{plain}

\newcommand{\R}{\mathbb{R}}
\newcommand{\N}{\mathbb{N}}

\DeclareMathOperator{\Symb}{Symb}

\DeclareMathOperator{\maxroot}{maxroot}

\DeclareMathOperator{\diag}{diag}
\DeclareMathOperator{\Ab}{Ab}

\title{On the Further Structure of the Finite Free Convolutions}
\author{Jonathan Leake \\ Nick Ryder \\}
%\date{}

\begin{document}

\maketitle

\begin{abstract}
Since the celebrated resolution of Kadison-Singer (via the Paving Conjecture) by Marcus, Spielman, and Srivastava, much study has been devoted to further understanding and generalizing the techniques of their proof. Specifically, their barrier method was crucial to achieving the required polynomial root bounds on the finite free convolution. But unfortunately this method required individual analysis for each usage, and the existence of a larger encapsulating framework is an important open question. In this paper, we make steps toward such a framework by generalizing their root bound to all differential operators. We further conjecture a large class of root bounds, the resolution of which would require for more robust techniques. We further give an important counterexample to a very natural multivariate version of their bound, which if true would have implied tight bounds for the Paving Conjecture.
\end{abstract}

\section*{Introduction}

There is a long history of studying differential operators that preserve the set of univariate polynomials with only real roots. A classic result in this direction is: given a real-rooted polynomial $p(t)$, it is easy to see that $p(D)$ (where $D = \frac{\partial}{\partial t}$) preserves the set of real-rooted polynomials. When one bounds the degree of the input polynomial though, the set of all differential operators preserving real-rootedness is actually larger than just those of the above form. The set of such differential operators in this case has connections to the classical Walsh \cite{walsh} additive convolution. Namely, if $\boxplus^n$ denotes the Walsh additive convolution for polynomials of degree at most $n$ (also recently known as the finite free convolution; e.g. see \cite{marcus2016polynomial}) then $p(D)$ preserves the set of real-rooted polynomials of degree at most $n$ if and only if there is some real-rooted $q$ such that $p(D)r(t) = (r \boxplus^n q)(t)$ for all $r$.

Recently, there has been interest in understanding how certain differential operators preserving real-rootedness affect the roots of the input polynomial. Much of this interest derives from the notion of interlacing families, heavily studied by Marcus, Spielman, and Srivastava in their collection of papers (\cite{ramanujan},\cite{marcus2015interlacingii},\cite{marcus2015interlacingiv}) containing their celebrated resolution of Kadison-Singer. Most uses of interlacing families share the same loose goal: to study spectral properties of random combinatorial objects. To do this, one equates random combinatorial operations on the objects to differential operators on associated characteristic polynomials. Then, understanding the spectrum of the random objects is reduced to understanding how the roots of certain polynomials are affected by differential operators preserving real-rootedness.

The most robust way to study the effects of a differential operator on roots comes from framework of Marcus, Spielman, and Srivastava. They associate an $S$-transformation to polynomials, inspired from free probability theory, which gives tight bounds on the movement of the largest root via the additive convolution mentioned above. This framework was used in particular in \cite{ramanujan} to prove the existence of Ramanujan bipartite graphs. The strength of their framework is that it gives tight largest root bounds for a general class of differential operator preserving real-rootedness, replacing many of the ad hoc methods used before to study specific desired operators.

That said, some combinatorial objects require the use of multivariate methods to analyze. Here the associated polynomials are real stable (a multivariate generalization of real-rootedness). For these methods, there is no general framework in place to study the effects of the linear operators on the roots. The authors of this paper consider the following to be one of the large unanswered questions around the recent resurgence of interest in finite free convolutions: How does the multivariate additive convolution affect root information?

In an attempt to better understand the multivariate case, we expand upon the previous results of Marcus-Spielman-Srivastava and provide more general results about how \emph{all} roots are of a given polynomial are affected by finite free convolutions. To do this, we first expand their bound on the movement of the largest root to all differential operators preserving real-rootedness. Further, we utilize the theory of hyperbolic polynomials to give more interesting root bounds on interior roots (other roots besides the largest).

With these result in hand, we state a number of conjectures (and some counterexamples) in the direction of stronger univariate results on interior roots and of analogous multivariate results. Proving similar multivariate results seems to be a hard problem in general. But it is the authors' hope that by better fleshing out the details of the additive convolution in the univariate case, one can better abstract to the multivariate case to handle problems such as Kadison-Singer, the Paving conjecture, and Heilman-Lieb root bounds.

\section{The Additive Convolution}
The main focus of this paper is the additive convolution, also referred to as the Walsh convolution and the finite free additive convolution. The convolution can be defined in a coordinate-free way as follows for $p,q \in \R^n[t]$, the space of all univariate polynomials of degree at most $n$. (Again, $D := \frac{\partial}{\partial t}$.)
\[
    p \boxplus^n q = \frac{1}{n!} \sum_{k=0}^n D^k p(t) D^{n-k} q (0)
\]
Notice we get a differential operator if we fix a polynomial $q$ and view the additive convolution as a linear operator $p \mapsto p \boxplus^n q$, and we can obtain all differential operators in this fashion. Some well known properties of the additive convolution are given as follows, where we let $\lambda(p)$ denote the non-increasing vector of roots of $p$ counting multiplicities.

\begin{proposition} \label{prop:add_props}
    Let $p,q \in \R^n[t]$ be real-rooted polynomials of degree at most $n$. We have the following:
    \begin{enumerate}
        \item (Symmetry) $p \boxplus^n q = q \boxplus^n p$
        \item (Shift-invariance) $(p(t+a) \boxplus^n q)(t) = (p \boxplus^n q)(t+a) = (p \boxplus^n q(t+a))(t)$ for $a \in \R^n$
        \item (Scale-invariance) $(p(at) \boxplus^n q(at)) = a^n \cdot (p \boxplus^n q)(at)$ for $a \in \R$
        \item (Derivative-invariance) $(D p) \boxplus^n q = D (p \boxplus^n q) = p \boxplus^n (D q)$ for all $k \in [n]$
        \item (Stability-preserving) $p \boxplus^n q$ is real rooted
        \item (Triangle inequality) $\lambda_1(p \boxplus^n q) \leq \lambda_1(p) + \lambda_1(q)$
    \end{enumerate}
\end{proposition}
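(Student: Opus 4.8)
The plan is to treat the six items in roughly increasing order of difficulty, drawing everything from the defining formula and from its coefficientwise reformulation. First I would record the explicit expansion: if $p=\sum_i a_i t^i$ and $q=\sum_j b_j t^j$, then differentiating term by term and using that $D^{n-k}q(0)=(n-k)!\,b_{n-k}$, one finds that the coefficient of $t^m$ in $p\boxplus^n q$ is $\frac{1}{n!}\sum_{i+j=n+m} a_i b_j \frac{i!\,j!}{m!}$. This expression is manifestly symmetric under swapping the sequences $(a_i)$ and $(b_j)$, which gives (1). Item (3) is then immediate: substituting $t\mapsto at$ replaces $a_i$ by $a^i a_i$ and $b_j$ by $a^j b_j$, so the coefficient of $t^m$ acquires a factor $a^{i+j}=a^{n+m}$, which is exactly the claimed scaling. (The same formula also shows $p\boxplus^n q\equiv 0$ precisely when $\deg p+\deg q<n$, a degenerate case in which (5) and (6) are vacuous; elsewhere I assume $\deg p+\deg q\ge n$.)

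For (4) I would argue directly from the operator form: applying $D$ to $\frac{1}{n!}\sum_k D^k p(t)\,D^{n-k}q(0)$ differentiates only the $p$-slot, so $(Dp)\boxplus^n q=\frac{1}{n!}\sum_k D^{k+1}p(t)\,D^{n-k}q(0)=D(p\boxplus^n q)$ verbatim, and $D(p\boxplus^n q)=p\boxplus^n(Dq)$ follows from the symmetry (1); iterating handles $D^k$. Item (2) then drops out by expanding the translation as the finite sum $p(t+a)=\sum_{k=0}^n\frac{a^k}{k!}D^k p(t)$, applying bilinearity of $\boxplus^n$ together with (4) to each term, and reassembling; the third equality in (2) follows from (1).

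The substantive item is (5). The cleanest route is to invoke the realization of $\boxplus^n$ as an expected characteristic polynomial from \cite{marcus2016polynomial}: when $p=\det(tI-A)$ and $q=\det(tI-B)$ for real symmetric $A,B$, one has $p\boxplus^n q=\mathbb{E}_Q \det(tI-A-QBQ^{\mathsf T})$ with $Q$ Haar-distributed on the orthogonal group. Each polynomial in the family $\{\det(tI-A-QBQ^{\mathsf T})\}_Q$ is real-rooted, and Marcus--Spielman--Srivastava's analysis shows this family is an interlacing family (every convex combination remains real-rooted), so the average is real-rooted; the general degree-$\le n$ case follows by a Hurwitz limiting argument. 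I expect this to be the main obstacle if one insists on a self-contained proof: the classical alternative is to homogenize and recognize $p\mapsto p\boxplus^n q$, for a fixed real-rooted $q=\prod_i(t-\rho_i)$, as a composition of polar-derivative-type operators, so that the Grace--Walsh--Szeg\H{o} coincidence theorem forces the roots to stay real --- but identifying the homogenized operation correctly and tracking the normalizations is the delicate bookkeeping.

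Finally (6). Using shift-invariance (2), translate $p$ by $-\lambda_1(p)$ and $q$ by $-\lambda_1(q)$, reducing the claim to: if $p,q$ are real-rooted with all roots $\le 0$ (and, without loss of generality, monic), then $p\boxplus^n q$ has no positive root. For $\epsilon>0$ set $\tilde p=p(\cdot+\epsilon)$, $\tilde q=q(\cdot+\epsilon)$, whose roots all lie in $(-\infty,-\epsilon]$; by Gauss--Lucas every $D^k\tilde p$ and $D^{n-k}\tilde q$ again has all roots $\le-\epsilon$, hence is strictly positive at every point $\ge 0$ whenever it is nonzero, and in particular the term $k=\deg p$ in the defining sum is strictly positive while no term is negative. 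Therefore $(\tilde p\boxplus^n\tilde q)(t)>0$ for all $t\ge 0$; by (2) the left side equals $(p\boxplus^n q)(t+2\epsilon)$, so $(p\boxplus^n q)(s)>0$ for all $s\ge 2\epsilon$. Letting $\epsilon\to 0^+$ gives $(p\boxplus^n q)(s)>0$ for all $s>0$, i.e. $\lambda_1(p\boxplus^n q)\le 0$, which after undoing the two translations is exactly $\lambda_1(p\boxplus^n q)\le\lambda_1(p)+\lambda_1(q)$.
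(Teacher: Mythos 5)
The paper states Proposition \ref{prop:add_props} as a collection of well-known facts and supplies no proof, so there is no in-text argument to compare against. Your proof is correct, and the organizing coefficient formula
\[
[t^m]\,(p\boxplus^n q)=\frac{1}{n!\,m!}\sum_{i+j=n+m} i!\,j!\,a_i b_j
\]
(with the convention $a_i=b_j=0$ outside $[0,n]$) is the right normal form: it makes (1) and (3) transparent, and your observation that $p\boxplus^n q\equiv 0$ exactly when $\deg p+\deg q<n$ is both correct and quietly load-bearing in the positivity step of (6). Items (4) and (2) are handled cleanly (derivative commutes with the sum; Taylor expansion plus bilinearity), and the reduction of the third equality in (2) to (1) is fine.

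For (5) you are right that this is the only genuinely nontrivial item, and both routes you sketch are valid: the expected-characteristic-polynomial realization of \cite{marcus2016polynomial} (with Haar $Q$ and a common-interlacing argument, then a Hurwitz limit for $\deg p+\deg q<2n$), or the classical Walsh/Grace--Walsh--Szeg\H{o} argument via polar derivatives. Either way this item is an import, not a self-contained derivation, and you flag that honestly. For (6) your argument is correct: after centering so $\lambda_1(p)=\lambda_1(q)=0$, Gauss--Lucas keeps all roots of every $D^k\tilde p$ and $D^{n-k}\tilde q$ at most $-\epsilon$, so every term in the defining sum is nonnegative on $t\ge 0$ and the $k=\deg p$ term is strictly positive (using $\deg p+\deg q\ge n$), hence $p\boxplus^n q>0$ on $(0,\infty)$ after the $\epsilon\to 0^+$ limit. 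It is worth noting that the paper \emph{does} later prove the multivariate analogue of item (6) (its Proposition on $\boxplus^\gamma$), and that proof uses essentially the same shift-then-positivity idea, phrased in terms of sign-definiteness of coefficients rather than Gauss--Lucas; in the univariate case these are two faces of the same observation (all roots $\le 0$ iff all coefficients have the same sign, given real-rootedness). Your route and the paper's multivariate route are thus conceptually parallel, and the coefficient formulation is slightly more economical since it avoids the $\epsilon$-regularization.
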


Finally, the additive convolution can be used to characterize differential operators which preserve real-rootedness.

\begin{proposition}
    A linear operator $T: \R^n[t] \to \R^n[t]$ is a differential operator which preserves real-rootedness if and only if it can be written in the form $T(p) = p \boxplus^n q$ for some real-rooted $q \in \R^n[t]$.
\end{proposition}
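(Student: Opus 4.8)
The plan is to upgrade the parenthetical remark following the definition of $\boxplus^n$ into a precise bijection between constant-coefficient differential operators on $\R^n[t]$ and polynomials $q\in\R^n[t]$, and then to read off the real-rootedness condition directly from Proposition~\ref{prop:add_props}.

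First I would make the correspondence explicit. Fixing $q(t)=\sum_{j=0}^n b_j t^j$ and using $D^{n-k}q(0)=(n-k)!\,b_{n-k}$ (all higher derivatives of degree-$\le n$ polynomials vanish), the definition of the convolution unwinds to
\[
    T_q(p) \;:=\; p \boxplus^n q \;=\; \sum_{k=0}^n \frac{(n-k)!}{n!}\,b_{n-k}\, D^k p .
\]
Thus $q\mapsto T_q$ is a linear map from the $(n+1)$-dimensional space $\R^n[t]$ into the $(n+1)$-dimensional space of operators $\sum_{k=0}^n c_k D^k$, and the relation $c_k=\tfrac{(n-k)!}{n!}b_{n-k}$ can be inverted coordinate by coordinate. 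Hence $q\mapsto T_q$ is a bijection, and every constant-coefficient differential operator $T$ on $\R^n[t]$ has the form $T=T_q$ for a unique $q\in\R^n[t]$.

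The ``if'' direction is then immediate: for real-rooted $q$, the operator $T_q\colon p\mapsto p\boxplus^n q$ is a differential operator by the display above, and it preserves real-rootedness by the stability-preservation clause of Proposition~\ref{prop:add_props}. For the ``only if'' direction, let $T$ be a differential operator preserving real-rootedness and write $T=T_q$ via the bijection. The key point is that $t^n$ is the identity for $\boxplus^n$: substituting $p=t^n$ into the display gives $T_q(t^n)=\sum_k \tfrac{(n-k)!}{n!}b_{n-k}\cdot\tfrac{n!}{(n-k)!}\,t^{n-k}=\sum_j b_j t^j=q$. Since $t^n$ is real-rooted and $T=T_q$ maps it to $q$, we conclude that $q$ is real-rooted, which completes the proof.

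I do not expect a genuine obstacle here: the only substantive input is the real-rootedness preservation of $\boxplus^n$, which is quoted in Proposition~\ref{prop:add_props}, and everything else is the bookkeeping that matches the coefficients of $q$ with those of $T$ and recognizes $t^n$ as the convolutional unit. The one place that needs a little care is fixing the intended meaning of ``differential operator'' as a polynomial in $D$ with constant coefficients (equivalently $\sum_{k=0}^n c_k D^k$ acting on $\R^n[t]$); for operators with nonconstant polynomial coefficients the statement can fail, essentially because such operators may collapse degrees, so the claim should be understood as being about precisely this constant-coefficient class.
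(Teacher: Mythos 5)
The paper states this proposition without proof, so there is no in-text argument to compare against; your task was essentially to supply the missing proof, and what you wrote is correct. The key observations you make are exactly the right ones: the coefficient identity $p\boxplus^n q=\sum_k \tfrac{(n-k)!}{n!}b_{n-k}D^k p$ shows $q\mapsto T_q$ is a linear bijection from $\R^n[t]$ onto the space of constant-coefficient differential operators of order $\le n$; the ``if'' direction is then precisely clause (5) (stability-preservation) of Proposition~\ref{prop:add_props}; and the ``only if'' direction follows from the fact that $t^n$ is the $\boxplus^n$-identity, so $T(t^n)=q$ forces $q$ to be real-rooted whenever $T$ preserves real-rootedness. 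Your closing caveat is also correct and worth keeping: the statement should be read with ``differential operator'' meaning a polynomial in $D$ with constant coefficients, which is the usage the paper adopts (cf.\ the discussion of $p(D)$ in the introduction), and this restriction is necessary for the bijection to be onto the class of operators under consideration.
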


We now discuss a number of stronger properties one can achieve for the additive convolution, using hyperbolicity. Then in \S\ref{sect:result_statement}, we state and discuss our main result: a generalization of the main result from \cite{finiteconvolutions} with a simplified and more intuitive proof.

\subsection{Interior Roots} \label{sect:interior_roots}

The triangle inequality above gives the most basic bound on the largest root of the convolution to two polynomials. The first main collection of interior root bounds can be stated in terms of \emph{majorization}. The majorization order is a partial order on vectors in $\R^n$ which can be thought of morally as saying that the coordinates of one vector are more spread out than the coordinates of the other. Formally, majorization is defined as follows. We refer the reader to \cite{marshall1979inequalities} for more discussion on the following equivalent definitions.
    
\begin{definition} \label{def:maj}
    Given $x,y \in \R^n$, we say that $x$ \emph{majorizes} $y$ and write $y \prec x$ if one of the following equivalent conditions holds. We let $x^\downarrow = (x_1^\downarrow, ..., x_n^\downarrow)$ denote the ordering of the entries $x$ in non-increasing order.
    \begin{enumerate}
        \item $\sum_{i=1}^k y_i^\downarrow \leq \sum_{i=1}^k x_i^\downarrow$ for all $k$, with equality for $k=n$.
        \item $y$ is contained in the convex hull of $\{(x_{\sigma(1)},...,x_{\sigma(n)}) ~|~ \sigma \in S_n\} \subset \R^n$.
        \item There exists a doubly stochastic matrix $D$ (each row and column sum is 1) such that $Dx = y$.
        \item There is a sequence of pinches, of the form $x \mapsto (x_1,...,x_j + \alpha, ..., x_k - \alpha,...,x_n)$ such that the $j^\text{th}$ and $k^\text{th}$ coordinates are getting closer together (without crossing), which takes $x$ to $y$.
    \end{enumerate}
    This makes $\prec$ a partial order on $\R^n$ for all $n$.
\end{definition}

Note that condition (1) applied to the vectors of roots of two polynomials can be interpreted as root bounds involving interior roots. What we need then is some way to prove majorization results about the additive convolution. One way to do this is via hyperbolic polynomials, which enables us to convert inequalities regarding matrix eigenvalues into inequalities regarding roots of polynomials.

\begin{definition}
    Given a homogeneous polynomial $p \in \R[x_1,...,x_n]$ and a vector $e \in \R^n$, we say that $p$ is \emph{hyperbolic with respect to $e$} if $p(e) > 0$ and $p(et + x) \in \R[t]$ is real-rooted for all $x \in \R^n$. Whenever $p$ and $e$ are assumed, we let $\lambda(x)$ to denote the vector of roots of the polynomial $p(et + x)$ in nonincreasing order.
\end{definition}

Hyperbolic polynomials have been heavily studied over the past few decades, starting with \cite{gaarding1959inequality}. There are a number of standard results regarding certain convexity properties of such polynomials, but we omit these here (a good reference is \cite{renegar2006hyperbolic}).

The intuition that one should have when considering hyperbolic polynomials and $\lambda(x)$ is that of the determinant of a matrix and its eigenvalues. This is formalized in the fact that $\det(X)$ (where $X$ is a symmetric matrix of variables) is hyperbolic with respect to the identity matrix, and in this case $\lambda(X)$ is the vector of eigenvalues of $X$. This intuition is further justified by the fact that many properties of the eigenvalues of real symmetric matrices seamlessly transfer over to properties about $\lambda(x)$ for any hyperbolic polynomial $p$. In fact, by exploiting the Helton-Vinnikov theorem (which says that all 3-variable hyperbolic polynomials are determinants \cite{helton2007linear},\cite{helton2007linear}) one can obtain \emph{all} of Horn's inequalities (see \cite{knutson2001honeycombs}) for any hyperbolic polynomial. That is, any inequality that holds between $\lambda(X)$, $\lambda(Y)$, and $\lambda(X+Y)$ for any $X,Y$ real symmetric with $p(X) = \det(X)$ (see Definition \ref{def:horns}) will also hold for any hyperbolic $p(x)$ and any vectors $x,y$. We state this formally as follows. 

\begin{theorem}[\cite{bauschke2001hyperbolic}, \cite{gurvits2004combinatorics}] \label{thm:hyp_horns}
    Fix a hyperbolic polynomial $p$ with respect to $e$. For $v,w \in \R^n$, Horn's inequalities hold for $\lambda(v+w)$, $\lambda(v)$, and $\lambda(w)$. In particular, the following majorization relation holds:
    \[
        \lambda(v+w) \prec \lambda(v) + \lambda(w)
    \]
\end{theorem}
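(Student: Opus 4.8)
The plan is to reduce the statement, via restriction to a three-dimensional subspace and the Helton--Vinnikov theorem, to the classical Horn inequalities for eigenvalues of sums of real symmetric matrices. The first move is the standard observation that hyperbolicity passes to subspaces through the hyperbolicity direction: if $U \subseteq \R^n$ is a linear subspace with $e \in U$, then $p|_U$ is hyperbolic with respect to $e$, and for every $x \in U$ the root vector $\lambda(x)$ is the same whether computed from $p$ or from $p|_U$, since the defining univariate polynomial $p(et+x)$ is literally unchanged. I would apply this with $U = \operatorname{span}\{e,v,w\}$. If $\dim U \le 2$, then $p|_U$ is, after a linear change of coordinates, a homogeneous hyperbolic polynomial in at most two variables, hence a scalar times a product of real linear forms, and the claim is elementary; so assume $e,v,w$ are linearly independent and use coordinates $(a,b,c)\leftrightarrow ae+bv+cw$ on $U$. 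Then $\tilde p(a,b,c):=p(ae+bv+cw)$ is homogeneous of degree $d:=\deg p$ (it is $p$ composed with an injective linear map, and does not vanish identically since $\tilde p(1,0,0)=p(e)>0$) and is hyperbolic with respect to $(1,0,0)$.

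Next, by the Helton--Vinnikov theorem \cite{helton2007linear} (equivalently, the Lax conjecture), there exist real symmetric $d\times d$ matrices $B,C$ and a constant $c_0>0$ with
\[
    \tilde p(a,b,c)=c_0\det(aI+bB+cC).
\]
Indeed, Helton--Vinnikov gives $\tilde p=\det(aA_1+bA_2+cA_3)$ with $A_1\succ 0$ (the matrix in the hyperbolicity direction); conjugating the pencil by $A_1^{-1/2}$ puts it in the displayed form with $c_0=\det A_1>0$, and the positive scalar is irrelevant for roots. Unwinding the definition of $\lambda$: the vector $\lambda(v)$ is the root vector of $\tilde p(t,1,0)=c_0\det(tI+B)$, hence equals the nonincreasing eigenvalue vector of $-B$; likewise $\lambda(w)$ is that of $-C$; and $\lambda(v+w)$, the root vector of $\tilde p(t,1,1)=c_0\det(tI+B+C)$, is the nonincreasing eigenvalue vector of $(-B)+(-C)$.

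Finally, apply the Horn inequalities for eigenvalues of a sum of real symmetric (equivalently, Hermitian) matrices --- the resolution of Horn's conjecture \cite{knutson2001honeycombs} --- to the pair $-B,-C$; this yields all of Horn's inequalities for $\lambda(v+w),\lambda(v),\lambda(w)$. For the majorization relation specifically, use the subfamily given by Ky Fan's maximum principle: for every $k$,
\[
    \sum_{i=1}^k \lambda_i(v+w)\ \le\ \sum_{i=1}^k \lambda_i(v)+\sum_{i=1}^k \lambda_i(w),
\]
with equality at $k=d$ since $\Tr(-B-C)=\Tr(-B)+\Tr(-C)$. As $\lambda(v)$ and $\lambda(w)$ are each nonincreasing, their coordinatewise sum is already nonincreasing, so the right-hand side is $\sum_{i=1}^k(\lambda(v)+\lambda(w))_i^\downarrow$, while $\lambda(v+w)$ is nonincreasing by definition; this is exactly condition (1) of Definition \ref{def:maj}, giving $\lambda(v+w)\prec\lambda(v)+\lambda(w)$.

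The main obstacle is not any single hard step: the argument is essentially parasitic on two deep external results (Helton--Vinnikov and the Horn conjecture), so the only genuine work is the bookkeeping of the reduction --- checking that hyperbolicity and the root data survive restriction to $\operatorname{span}\{e,v,w\}$, dispatching the degenerate low-dimensional cases, and fixing the sign and ordering conventions relating roots of $\tilde p(t,\cdot,\cdot)$ to matrix eigenvalues. I would also record a softer, self-contained route to the majorization statement alone that avoids Helton--Vinnikov: each partial sum $x\mapsto\sum_{i=1}^k\lambda_i(x)$ is positively homogeneous and convex on $\R^n$ (a standard consequence of G\aa rding's convexity of hyperbolicity cones \cite{gaarding1959inequality}, \cite{bauschke2001hyperbolic}), while $x\mapsto\sum_{i=1}^d\lambda_i(x)$ is linear; combining homogeneity with convexity at the midpoint $\tfrac{v+w}{2}$ yields $\sum_{i=1}^k\lambda_i(v+w)\le\sum_{i=1}^k\lambda_i(v)+\sum_{i=1}^k\lambda_i(w)$ directly, and equality at $k=d$ is linearity.
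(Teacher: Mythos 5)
Your proof is correct and matches the argument the paper sketches in the paragraph preceding the theorem (the theorem itself is stated with citations to \cite{bauschke2001hyperbolic}, \cite{gurvits2004combinatorics} rather than proved in-text): restrict to $\operatorname{span}\{e,v,w\}$, apply Helton--Vinnikov to get a determinantal representation, then transfer the classical Horn inequalities; the bookkeeping of the sign convention ($\lambda(v)$ is the spectrum of $-B$, etc.) and the degenerate case $\dim U\le 2$ are handled correctly. Your closing remark that the majorization part alone follows directly from the convexity of $x\mapsto\sum_{i\le k}\lambda_i(x)$ on the hyperbolicity cone is a good one, and in fact reflects the route actually taken in the cited reference \cite{bauschke2001hyperbolic}, which does not invoke Helton--Vinnikov.
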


% a result of Borcea and Br{\"a}nd{\'e}n which characterizes the linear operators which preserve root majorization. We state here a corollary of the main result from [?], letting $\lambda(p)$ denote the vector of the roots of $p$ in nonincreasing order.

% \begin{theorem}[Borcea-Br{\"a}nd{\'e}n]
%     Let $T: \R^n[x] \to \R[x]$ be a linear operator which preserves real-rootedness, such that for all $p$ the degree of $T(p)$ equals that of $p$. Then $\lambda(p) \prec \lambda(q)$ implies $\lambda(T(p)) \prec \lambda(T(q))$.
% \end{theorem}

% Note that if $q$ is real-rooted and of degree exactly $n$, then the operator $p \mapsto p \boxplus^n q$ satisfies the premises of this theorem. Notice also that if $p$ is real-rooted and of degree exactly $n$, and if the sum of the roots of $p$ is 0, then $\lambda(x^n) \prec \lambda(p)$. We combine these results to achieve the following.

To apply this result, we need to view the additive convolution as a hyperbolic polynomial. We do this in the following.

\begin{proposition} \label{prop:add_hyp}
    Consider the following, where $\boxplus$ only acts on the $x$ variables.
    \[
        p(x,a_1,...,a_n,b_1,...,b_n) := \left(\prod_{k=1}^n (x-a_k)\right) \boxplus^n \left(\prod_{k=1}^n (x-b_k)\right)
    \]
    Then $p$ is hyperbolic with respect to $e = (1,0,...,0)$.
\end{proposition}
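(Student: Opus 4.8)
The plan is to verify the three conditions in the definition of hyperbolicity directly, treating $p$ as a polynomial in the $2n+1$ variables $x, a_1, \dots, a_n, b_1, \dots, b_n$ and taking $e = (1, 0, \dots, 0)$: that $p$ is homogeneous of degree $n$, that $p(e) > 0$, and that $p(et + y) \in \R[t]$ is real-rooted for every $y \in \R^{2n+1}$. Write $f(x) := \prod_{k=1}^n (x - a_k)$ and $g(x) := \prod_{k=1}^n (x - b_k)$, so that by definition $p = f \boxplus^n g = \frac{1}{n!}\sum_{k=0}^n D^k f(x)\cdot D^{n-k} g(0)$. Only homogeneity takes any work; the other two conditions then follow quickly from Proposition \ref{prop:add_props}.

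For homogeneity, observe that $D^{n-k} g(0)$ equals $(n-k)!$ times the coefficient of $x^{n-k}$ in $g$, i.e. $D^{n-k} g(0) = (-1)^k (n-k)!\, e_k(b_1, \dots, b_n)$ where $e_k$ is the $k$-th elementary symmetric polynomial, which is homogeneous of degree $k$ in $b = (b_1, \dots, b_n)$. Assigning total degree $1$ to each of $x, a_1, \dots, a_n, b_1, \dots, b_n$, the polynomial $f$ is homogeneous of degree $n$ in $(x, a)$, and since $D = \partial/\partial x$ lowers total degree by $1$ on every monomial, $D^k f(x)$ is homogeneous of degree $n - k$. Therefore each summand $D^k f(x)\cdot D^{n-k} g(0)$ is homogeneous of degree $(n-k) + k = n$ in $(x, a, b)$, and hence so is $p$. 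Finally, for $k \geq 1$ the factor $e_k(b)$ has positive degree in $b$, so only the $k = 0$ summand $f(x)\cdot D^n g(0) = n!\, f(x)$ contributes to the coefficient of $x^n$; thus that coefficient equals $\frac{1}{n!}\cdot n! = 1$, so $p$ is monic in $x$.

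Monicity gives $p(e) = p(1, 0, \dots, 0) = 1 > 0$, since every monomial of $p$ other than $x^n$ carries a positive power of some $a_k$ or $b_k$. For the last condition, fix $y = (y_0, \alpha_1, \dots, \alpha_n, \beta_1, \dots, \beta_n) \in \R^{2n+1}$; the point $et + y$ has $x$-coordinate $t + y_0$ and its remaining coordinates are $\alpha$ and $\beta$, so as a polynomial in $t$,
\[
    p(et + y) = \left[\left(\prod_{k=1}^n (x - \alpha_k)\right) \boxplus^n \left(\prod_{k=1}^n (x - \beta_k)\right)\right]_{x = t + y_0}.
\]
By shift-invariance (Proposition \ref{prop:add_props}(2)) this equals $\left(\prod_{k=1}^n (t + y_0 - \alpha_k)\right) \boxplus^n \left(\prod_{k=1}^n (t - \beta_k)\right)$; equivalently, it is a real translation of $\left(\prod_k (t - \alpha_k)\right) \boxplus^n \left(\prod_k (t - \beta_k)\right)$. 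In either form the two convolvands are real-rooted of degree $n$, so by Proposition \ref{prop:add_props}(5) their convolution is real-rooted, and translating the variable does not change this. Hence $p(et + y)$ is real-rooted, so $p$ is hyperbolic with respect to $e$.

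There is no real obstacle: the whole argument is elementary, using only the coefficientwise description of $\boxplus^n$ and items (2) and (5) of Proposition \ref{prop:add_props}. The only point that rewards a little care is the homogeneity accounting — specifically, recognizing that the evaluation ``$D^{n-k} g(0)$'' appearing in the definition of the convolution is (up to a constant) the coefficient of $x^{n-k}$ in $g$, and so contributes a factor homogeneous of degree exactly $k$ in the $b$-variables, which is precisely what makes the degrees in each summand sum to $n$.
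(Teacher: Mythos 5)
Your proof is correct and follows essentially the same approach as the paper: the key step in both is to observe that $p(et+y)$, as a polynomial in $t$, is a real translate of the univariate convolution $\bigl(\prod_k (t-\alpha_k)\bigr)\boxplus^n \bigl(\prod_k (t-\beta_k)\bigr)$ of two monic real-rooted degree-$n$ polynomials, which is real-rooted by the stability-preservation property, and that $p(e)=1>0$ by monicity. You additionally verify explicitly that $p$ is homogeneous of degree $n$ (a hypothesis built into the definition of hyperbolicity that the paper leaves implicit), which is a welcome bit of extra care.
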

\begin{proof}
    For $q(t) := \prod_k (t - a_k)$, $r(t) := \prod_k (t - b_k)$, and $y = (c,a_1,...,a_n,b_1,...,b_n)$ we compute:
    \[
        p(et + y) = p(t+c,a_1,...,a_n,b_1,...,b_n) = (q \boxplus^n r)(t+2c)
    \]
    So $p(et+y)$ is real-rooted since $q$ and $r$ are and $\boxplus$ preserves real-rootedness. Also, $p(e) = 1 > 0$.
\end{proof}

This fact allows us to immediately apply the previous theorem to the additive convolution. In what follows, we let $\lambda(p)$ denote the vector of roots of $p$ in non-increasing order.

\begin{corollary}
    Let $p,q \in \R^n[x]$ be real-rooted and of degree exactly $n$. Then:
    \[
        \lambda(p \boxplus^n q) \prec \lambda(p) + \lambda(q)
    \]
\end{corollary}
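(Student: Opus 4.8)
The plan is to obtain this corollary as an essentially immediate consequence of the two facts just assembled: Proposition~\ref{prop:add_hyp}, which exhibits the finite free convolution as a hyperbolic polynomial, and Theorem~\ref{thm:hyp_horns}, which supplies the majorization relation $\lambda(v+w) \prec \lambda(v) + \lambda(w)$ for any hyperbolic polynomial. So the entire argument is a matter of choosing the right vectors $v$ and $w$ and checking that the hyperbolic eigenvalues $\lambda(v)$, $\lambda(w)$, $\lambda(v+w)$ unwind to the root vectors $\lambda(p)$, $\lambda(q)$, $\lambda(p \boxplus^n q)$.

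Concretely, write $p(x) = \prod_{k=1}^n (x - p_k)$ and $q(x) = \prod_{k=1}^n (x - q_k)$, where $p_1 \geq \cdots \geq p_n$ and $q_1 \geq \cdots \geq q_n$ are the roots (this uses that $p,q$ have degree exactly $n$, so they genuinely have $n$ real roots each and the leading coefficients are $1$ after normalizing, which does not affect roots). Let $P(x, a_1, \dots, a_n, b_1, \dots, b_n)$ be the hyperbolic polynomial of Proposition~\ref{prop:add_hyp}, hyperbolic with respect to $e = (1, 0, \dots, 0)$ in $\R^{2n+1}$. I would then set $v := (0, p_1, \dots, p_n, 0, \dots, 0)$ and $w := (0, 0, \dots, 0, q_1, \dots, q_n)$, so that $v + w = (0, p_1, \dots, p_n, q_1, \dots, q_n)$. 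The key computation, which mirrors the one in the proof of Proposition~\ref{prop:add_hyp}, is that $P(et + v) = P(t, p_1, \dots, p_n, 0, \dots, 0) = \big(p \boxplus^n x^n\big)(t+c)$ for the appropriate shift, and similarly for $w$; one then identifies $p \boxplus^n x^n$ (or whatever the convolution against the ``all-roots-at-zero'' polynomial produces) with a scalar multiple of $p$ itself, so that $\lambda(v) = \lambda(p)$ up to the harmless shift, and likewise $\lambda(w) = \lambda(q)$ and $\lambda(v+w) = \lambda(p \boxplus^n q)$. Feeding these three identifications into Theorem~\ref{thm:hyp_horns} gives exactly $\lambda(p \boxplus^n q) \prec \lambda(p) + \lambda(q)$.

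The one place that needs genuine care — and what I expect to be the main (minor) obstacle — is pinning down exactly what $P(et+v)$ evaluates to and hence verifying that $\lambda(v)$ really equals $\lambda(p)$ rather than some rescaled or reindexed version. The subtlety is twofold: first, the convolution $q \boxplus^n r$ in Proposition~\ref{prop:add_hyp} is defined for monic degree-$n$ inputs, so one must check that setting all the $b_k = 0$ yields $\big(\prod_k(x - a_k)\big) \boxplus^n x^n = \prod_k (x - a_k)$ (equivalently, that $x^n$ is the identity for $\boxplus^n$, which follows directly from the defining formula $\frac{1}{n!}\sum_k D^k p(t)\, D^{n-k} q(0)$ since $D^{n-k}(x^n)(0)$ vanishes unless $k = 0$ and equals $n!$ when $k=0$); and second, one must track the affine shift ``$t + 2c$'' versus ``$t + c$'' from the proof of Proposition~\ref{prop:add_hyp} and confirm that a common shift of all of $v$, $w$, $v+w$ only translates all three hyperbolic-eigenvalue vectors by the same constant, which leaves the majorization relation $y \prec x$ invariant (translation by a constant vector preserves majorization). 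Once these bookkeeping points are dispatched, the corollary drops out with no further work; there is no deep content beyond Theorem~\ref{thm:hyp_horns} and the identity-element property of $x^n$.
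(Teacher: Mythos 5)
Your proposal is correct and follows essentially the same argument as the paper: choose $v = (0, p_1,\dots,p_n, 0,\dots,0)$ and $w = (0,0,\dots,0,q_1,\dots,q_n)$, identify $\lambda(v) = \lambda(p)$, $\lambda(w) = \lambda(q)$, $\lambda(v+w) = \lambda(p \boxplus^n q)$ via Proposition~\ref{prop:add_hyp}, and apply Theorem~\ref{thm:hyp_horns}. The extra step you verify --- that $x^n$ is the identity for $\boxplus^n$, so zeroing out the $b_k$-slots recovers $p$ itself --- is exactly the (unstated) computation the paper's one-line proof relies on; the shift bookkeeping you worry about is vacuous here since the first coordinate of $v$, $w$, and $v+w$ is zero.
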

\begin{proof}
    Let $v = (0,a_1,...,a_n,0,...,0)$ and $w = (0,0,...,0,b_1,...,b_n)$, where the $a_k$ and $b_k$ are the roots of $p$ and $q$, respectively. Then $\lambda(v) = \lambda(p)$ and $\lambda(w) = \lambda(q)$. Further, $\lambda(v+w) = \lambda(p \boxplus^n q)$. The result then follows from the previous theorem.
\end{proof}

Note that this immediately gives us interior root inequalities of the following form:
\[
    \sum_{i=1}^k \lambda_i(p \boxplus^n q) \leq \sum_{i=1}^k \lambda_i(p) + \sum_{i=1}^k \lambda_i(q)
\]
Using the same proof as in the corollary, we can similarly obtain all of Horn's inequalities for the roots of $p$, $q$, and $p \boxplus^n q$. For instance, we obtain the Weyl inequalities (for all $i,j$) which more directly bound the interior roots:
\[
    \lambda_{i+j-1}(p \boxplus^n q) \leq \lambda_i(p) + \lambda_j(q)
\]
Whenever $i=j=1$, this boils down to the triangle inequality:
\[
    \lambda_1(p \boxplus^n q) \leq \lambda_1(p) + \lambda_1(q)
\]

As it turns out, Theorem \ref{thm:hyp_horns} above also yields an important majorization preservation result regarding the additive convolution. In \cite{borcea2010hyperbolicity}, Borcea and Br{\"a}nd{\'e}n give a complete characterization of linear operators which preserve majorization of roots. Roughly speaking, the result says that a linear operator $T$ (with certain degree restrictions) which preserves real-rootedness has the following property:
\[
    \lambda(p) \prec \lambda(q) \implies \lambda(T(p)) \prec \lambda(T(q))
\]
Their result then immediately applies to the operator $T_q(p) := p \boxplus^n q$ for any fixed real-rooted $q$. This result also has a nice proof via hyperbolicity, and we demonstrate this now. As a note, the following proof immediately generalizes to any degree-preserving linear operator preserving real-rootedness. It is likely that one could generalize it further to the full Borcea-Br{\"a}nd{\'e}n result, using some of the results regarding polynomial degree from \cite{borcea2010hyperbolicity}.

\begin{corollary} \label{cor:maj_preservation}
    Let $p,q,r \in \R^n[x]$ be real-rooted polynomials of degree exactly $n$ such that $\lambda(p) \prec \lambda(q)$. Then:
    \[
        \lambda(p \boxplus^n r) \prec \lambda(q \boxplus^n r)
    \]
\end{corollary}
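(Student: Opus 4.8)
The plan is to realize both convolutions as values of the hyperbolic polynomial $P$ of Proposition \ref{prop:add_hyp} and then combine the convex-hull description of majorization (Definition \ref{def:maj}(2)) with Theorem \ref{thm:hyp_horns}. The one genuinely useful observation is that $\prod_k(x-a_k)$ is unchanged if we permute the $a_k$: this lets us write the point encoding $p\boxplus^n r$ as a convex combination of points that all encode the \emph{same} polynomial $q\boxplus^n r$, and for such a configuration the Horn/majorization inequality of Theorem \ref{thm:hyp_horns} collapses to exactly the desired conclusion.

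Concretely, write $a=\lambda(q)$ and $b=\lambda(r)$ in $\R^n$, and set $\mu:=\lambda(q\boxplus^n r)$. Since $\lambda(p)\prec\lambda(q)=a$, Definition \ref{def:maj}(2) yields coefficients $c_\sigma\ge 0$ with $\sum_{\sigma\in S_n}c_\sigma=1$ and $\lambda(p)=\sum_\sigma c_\sigma a_\sigma$, where $a_\sigma:=(a_{\sigma(1)},\dots,a_{\sigma(n)})$. For each $\sigma$ put $w_\sigma:=(0,a_\sigma,b)\in\R^{1+2n}$. Because $\prod_k(x-a_{\sigma(k)})=\prod_k(x-a_k)=q$, the computation in the proof of Proposition \ref{prop:add_hyp} (and the Corollary immediately following it) gives $\lambda(w_\sigma)=\lambda(q\boxplus^n r)=\mu$ for every $\sigma$, where $\lambda$ denotes the roots of $P(et+\cdot)$ with $e=(1,0,\dots,0)$; meanwhile $v:=\sum_\sigma c_\sigma w_\sigma=(0,\lambda(p),b)$ satisfies $\lambda(v)=\lambda(p\boxplus^n r)$. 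So it suffices to prove $\lambda(v)\prec\mu$.

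To do this, note that $P$ is homogeneous of degree $n$, so $\lambda(cw)=c\,\lambda(w)$ for $c>0$; hence Theorem \ref{thm:hyp_horns} applied to $c_1w_1+c_2w_2$ with $c_1+c_2=1$ gives $\lambda(c_1w_1+c_2w_2)\prec c_1\lambda(w_1)+c_2\lambda(w_2)$. We upgrade this to arbitrarily many terms by induction: writing $v=c_1w_1+(1-c_1)w'$ with $w'=\frac{1}{1-c_1}\sum_{\sigma\ne\sigma_1}c_\sigma w_\sigma$, the inductive hypothesis gives $\lambda(w')\prec\mu$, so $\lambda(v)\prec c_1\mu+(1-c_1)\lambda(w')$; and since $u\prec\mu$ forces $\alpha\mu+(1-\alpha)u=(\alpha I+(1-\alpha)D)\mu\prec\mu$ for any $\alpha\in[0,1]$ (with $D$ doubly stochastic and $D\mu=u$), transitivity of $\prec$ gives $\lambda(v)\prec\mu$, i.e. $\lambda(p\boxplus^n r)\prec\lambda(q\boxplus^n r)$.

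The substantive point — and the only place hyperbolicity is really used — is the passage from $\lambda(p)\prec\lambda(q)$ to $\lambda(p\boxplus^n r)\prec\lambda(q\boxplus^n r)$: majorization of inputs does not pass through a general real-rootedness-preserving operator for free (this is precisely the content of the Borcea--Br\"and\'en characterization), and it is Theorem \ref{thm:hyp_horns}, via the embedding of Proposition \ref{prop:add_hyp}, that supplies the needed sublinearity of $\lambda$ over convex combinations to a common value. Everything else (the Birkhoff-type decomposition, the homogeneity scaling, the induction) is routine; indeed, if one instead invokes the standard fact that $x\mapsto\sum_{i\le k}\lambda_i(x)$ is convex and $\sum_{i\le n}\lambda_i$ is linear for a hyperbolic polynomial, Step 3 becomes a one-line application of convexity to $v=\sum_\sigma c_\sigma w_\sigma$, bypassing the induction entirely.
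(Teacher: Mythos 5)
Your proof is correct and takes essentially the same route as the paper's: the hyperbolic embedding of Proposition \ref{prop:add_hyp}, the Birkhoff-type decomposition of $\lambda(p)$ as a convex combination of permutations of $\lambda(q)$, the majorization bound of Theorem \ref{thm:hyp_horns}, and homogeneity to pull out scalars. The only difference is bookkeeping — the paper collapses the multi-term step in one line via $\lambda\bigl(\sum_\sigma \beta_\sigma w_\sigma\bigr) \prec \sum_\sigma \beta_\sigma \lambda(w_\sigma) = \mu$, whereas you do a two-term-at-a-time induction plus a short doubly-stochastic argument, which reaches the same place.
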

\begin{proof}
    Let $a_k$, $b_k$, and $c_k$ be the roots of $p$, $q$, and $r$, respectively. By Definition \ref{def:maj} and the fact that $\lambda(p) \prec \lambda(q)$, we have that $(a_k)$ is in the convex hull of the permutations of $(b_k)$. That is,
    \[
        (a_1,...,a_n) = \sum_{\sigma \in S_n} \beta_\sigma \cdot (b_{\sigma(1)}, ..., b_{\sigma(n)})
    \]
    where $\beta_\sigma \geq 0$ and $\sum_\sigma \beta_\sigma = 1$. With this, we use the following notation:
    \[
        v := (0,a_1,...,a_n,c_1,...,c_n) \qquad w_\sigma := (0,b_{\sigma(1)},...,b_{\sigma(n)},c_1,...,c_n)
    \]
    And so we also have that $v = \sum_{\sigma \in S_n} \beta_\sigma w_\sigma$.
    
    Since $\prec$ is a partial order, we can induct on the majorization relation of Theorem \ref{thm:hyp_horns}, using the hyperbolic polynomial from Proposition \ref{prop:add_hyp}:
    \[
        \lambda(v) = \lambda\left(\sum_{\sigma \in S_n} \beta_\sigma w_\sigma\right) \prec \sum_{\sigma \in S_n} \lambda(\beta_\sigma w_\sigma)
    \]
    By the scale-invariance property of $\boxplus$ (see Proposition \ref{prop:add_props}), we have that $\lambda(\beta_\sigma w_\sigma) = \beta_\sigma \cdot \lambda(w_\sigma)$. This implies:
    \[
        \lambda(p \boxplus^n r) = \lambda(v) \prec \sum_{\sigma \in S_n} \beta_\sigma \cdot \lambda(w_\sigma) = \sum_{\sigma \in S_n} \beta_\sigma \cdot \lambda(q \boxplus^n r) = \lambda(q \boxplus^n r)
    \]
\end{proof}

\subsection{Submodularity and the Main Result} \label{sect:result_statement}

In \cite{finiteconvolutions}, the authors consider the effects of a certain class of differential operators on the largest root of a given real-rooted polynomial:
\[
    U_\alpha := 1 - \alpha D
\]
This differential operator is inspired by the Cauchy transform, via the following equivalence:
\[
    U_\alpha p (x) = 0 \iff  p(x) - \alpha p'(x) = 0
    \iff \frac{p'(x)}{p(x)} = \frac{1}{\alpha} =: \omega
\]
Restricting to points larger than the largest root of $p$, we have that $\frac{p'}{p}$ is a bijection between $(\lambda_1(p), \infty)$ and $(0, \infty)$. Let $\mathcal{K}_\omega(p)$ denote the inverse of $\omega$. Note that as $\omega \to 0$ our inverse tends to infinity, while as $\omega \to \infty$ our inverse tends to $\lambda_1(p)$. Furthermore, $\lambda_1 (U_\alpha p) = \mathcal{K}_\omega(p)$. This definition is inspired by similar objects from free probability, as discussed in \cite{finiteconvolutions}. The main result from \cite{finiteconvolutions} regarding these $U_\alpha$ is given as follows:

\begin{theorem}[\cite{finiteconvolutions}]\label{thm:main_result}
    Let $p,q \in \R^n[t]$ be real-rooted polynomials of degree $n$. For any $\alpha > 0$ we have:
    \[
        \lambda_1(U_\alpha(p \boxplus^n q)) + n \alpha \leq \lambda_1(U_\alpha(p)) + \lambda_1(U_\alpha(q))
    \]
\end{theorem}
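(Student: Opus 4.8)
\textbf{Step 1: pass to the $\mathcal{K}$-transform.} The plan is to translate the inequality through the $\mathcal K$-transform and read it off from a single submodularity-type estimate. Put $\omega:=1/\alpha$ and, for a real-rooted $s$ of degree $n$, write $\Phi_s(x):=s'(x)/s(x)=\sum_i (x-\lambda_i(s))^{-1}$; this is a strictly decreasing bijection $(\lambda_1(s),\infty)\to(0,\infty)$, and $\mathcal K_\omega(s)$ is the point where it equals $\omega$, so that $\lambda_1(U_\alpha s)=\mathcal K_\omega(s)$ as recalled above. The claimed bound is then exactly
\[
\mathcal K_\omega(p\boxplus^n q)+\tfrac{n}{\omega}\ \le\ \mathcal K_\omega(p)+\mathcal K_\omega(q).
\]
Since $\Phi_{x^n}(x)=n/x$ forces $\mathcal K_\omega(x^n)=n/\omega$, and $x^n$ is the $\boxplus^n$-identity, this says precisely that $\mathcal K_\omega$ is \emph{submodular} for $\boxplus^n$. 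Two boundary behaviours bracket the estimate and will be used. As $\alpha\to 0$ (i.e.\ $\omega\to\infty$) the inequality degenerates to the triangle inequality of Proposition~\ref{prop:add_props}. As $\alpha\to\infty$ (i.e.\ $\omega\to 0$), the elementary expansion $\mathcal K_\omega(s)=\tfrac{n}{\omega}+\tfrac1n\Tr(s)+O(\omega)$ together with the additivity $\Tr(p\boxplus^n q)=\Tr(p)+\Tr(q)$ (additivity of the subleading coefficient) shows that the deficit $\mathcal K_\omega(p)+\mathcal K_\omega(q)-\mathcal K_\omega(p\boxplus^n q)-\tfrac{n}{\omega}$ tends to $0$.

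\textbf{Step 2: the barrier/subordination estimate, and a convex repackaging.} The theorem follows from the subordination inequality: whenever $a>\lambda_1(p)$ and $b>\lambda_1(q)$ satisfy $\Phi_p(a)=\Phi_q(b)=\omega$, one has $\Phi_{p\boxplus^n q}\!\left(a+b-\tfrac{n}{\omega}\right)\le\omega$. Indeed $\Phi_{p\boxplus^n q}$ is strictly decreasing and equals $\omega$ exactly at $\mathcal K_\omega(p\boxplus^n q)$, so once one checks that $z:=a+b-n/\omega$ lies above $\lambda_1(p\boxplus^n q)$ (a short separate point; it can fail only in degenerate configurations, where the theorem is directly seen to be an equality) the displayed bound forces $\mathcal K_\omega(p\boxplus^n q)\le z$, which is Step~1's inequality. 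An equivalent, matrix-free packaging — closer to the spirit of ``submodularity'' — is to fix $p,q$ and study $g(\alpha):=\lambda_1(U_\alpha p)+\lambda_1(U_\alpha q)-\lambda_1(U_\alpha(p\boxplus^n q))-n\alpha$. Each $\alpha\mapsto\lambda_1(U_\alpha s)$ is convex, being the functional inverse of $x\mapsto s(x)/s'(x)$, which is $1/n$ times the harmonic mean of the affine functions $x-\lambda_i(s)$ and hence concave and increasing on $(\lambda_1(s),\infty)$. By Step~1 we have $g(0^+)\ge 0$ and $g(\alpha)\to 0$ as $\alpha\to\infty$; therefore it would suffice to show that $g$ itself is convex, since a convex function on $(0,\infty)$ with limit $0$ at $+\infty$ is non-increasing (otherwise it would be unbounded above), whence $g\ge\lim_\infty g=0$.

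\textbf{Step 3: proving the estimate, and the main obstacle.} To actually establish the subordination inequality of Step~2 — equivalently, the convexity of $g$ — I would invoke the Marcus--Spielman--Srivastava realization $p\boxplus^n q=\mathbb E_{Q}\det(xI-A-QBQ^{\top})$, with $Q$ Haar on $O(n)$ and $A,B$ symmetric having characteristic polynomials $p,q$. Since $\Phi$ of a characteristic polynomial is $\operatorname{tr}((xI-\cdot)^{-1})$, this reduces the estimate to a barrier-tracking computation controlling the weighted average $\Phi_{p\boxplus^n q}(x)=\mathbb E_Q[\det(xI-A-QBQ^{\top})\operatorname{tr}(xI-A-QBQ^{\top})^{-1}]/\mathbb E_Q\det(xI-A-QBQ^{\top})$ through the coordinate shift $x=a+b-n/\omega$. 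The hard point — and, per the abstract, the reason this kind of bound historically needed a tailored argument — is obtaining the barrier estimate with the \emph{sharp} additive constant $n/\omega=n\alpha$ rather than something lossier. Cheap tools do not reach it: the triangle inequality only gives $\lambda_1(U_\alpha(p\boxplus^n q))\le\lambda_1(U_\alpha p)+\lambda_1(q)$, and even the full Horn/Weyl inequalities for $\boxplus^n$ derived in \S\ref{sect:interior_roots} fall short here by a term of order $n\alpha$. I expect essentially all of the work to be concentrated in that estimate; the reductions of Steps~1--2 and the boundary analysis are bookkeeping.
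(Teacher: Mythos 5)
There is a genuine gap. Steps 1 and 2 are correct bookkeeping: setting $\omega=1/\alpha$, the stated bound is equivalent to the subordination estimate $\Phi_{p\boxplus^n q}\bigl(a+b-\tfrac{n}{\omega}\bigr)\le\omega$ whenever $\Phi_p(a)=\Phi_q(b)=\omega$, and your boundary checks at $\omega\to\infty$ (triangle inequality) and $\omega\to 0$ (additivity of the sum of roots under $\boxplus^n$) are right. But Step 3 does not prove that estimate; it only says you ``would invoke'' the MSS random-rotation realization $p\boxplus^n q=\mathbb{E}_Q\det(xI-A-QBQ^\top)$ and that ``essentially all of the work'' is a barrier-tracking computation. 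That computation \emph{is} the theorem; deferring it means the proposal is a reduction, not a proof. The alternative ``convex repackaging'' is also not carried out: you observe that each $\alpha\mapsto\lambda_1(U_\alpha s)$ is convex, but $g(\alpha)=\lambda_1(U_\alpha p)+\lambda_1(U_\alpha q)-\lambda_1(U_\alpha(p\boxplus^n q))-n\alpha$ is a difference of convex functions, so it has no a priori curvature, and no argument for its convexity is offered. Until one of these two routes is actually closed, the statement has not been proved.

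It is also worth flagging that the paper does not reprove this theorem by the MSS barrier method at all. In \S\ref{sect:proof} it establishes the strictly more general submodularity
\[
\lambda_1(p\boxplus^n q\boxplus^n r)+\lambda_1(r)\ \le\ \lambda_1(p\boxplus^n r)+\lambda_1(q\boxplus^n r)
\]
for all real-rooted $p,q,r$ of degree $n$, and then obtains the stated theorem as the special case $r=u_\alpha(t)=t^n-n\alpha t^{n-1}$, since $U_\alpha(\cdot)=\cdot\,\boxplus^n u_\alpha$ and $\lambda_1(u_\alpha)=n\alpha$. The general result is proved by interlacing together with a variational/perturbation argument: fix $q,r$, take a maximizer of $\beta(p)=\lambda_1(p\boxplus^n q\boxplus^n r)+\lambda_1(r)-\lambda_1(p\boxplus^n r)-\lambda_1(q\boxplus^n r)$ among degree-$k$ polynomials with roots in a compact interval, write $p=\tilde p_{\mu_*}+\hat p_{\mu_*}$ for a critical $\mu_*$ so that the three largest roots $\lambda_1(\tilde p\boxplus^n r)$, $\lambda_1(\hat p\boxplus^n r)$, $\lambda_1(p\boxplus^n r)$ coincide, and induct on $\deg(p)$, the degree-one base case following from majorization preservation (Corollary~\ref{cor:maj_preservation}). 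That route avoids the expected-characteristic-polynomial/barrier machinery entirely, and it is precisely the new content of the paper. Your proposal, by contrast, reconstructs the framing of the original MSS argument from \cite{finiteconvolutions} and stops at its hardest step.
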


As discussed above, every differential operator on polynomials in $\R^n[t]$ can be represented as $T(p) = p \boxplus^n q$ for some polynomial $q \in \R^n[t]$. In particular we can represent $U_\alpha$ via
\[
    U_\alpha(p) = p \boxplus u_\alpha
\]
where $u_\alpha(t) := t^n - n\alpha \cdot t^{n-1}$. Notice that here we have $\lambda_1(u_\alpha) = n\alpha$, which means that the above result can be restated as follows:
\[
    \lambda_1(p \boxplus^n q \boxplus^n u_\alpha) + \lambda_1(u_\alpha) \leq \lambda_1(p \boxplus^n u_\alpha) + \lambda_1(q \boxplus^n u_\alpha)
\]
This is a submodularity relation for the additive convolution. Further, by rearranging this result, it can also be seen as a diminishing returns property of the convolution:
\[
    \lambda_1(p \boxplus^n q \boxplus^n u_\alpha) - \lambda_1(q \boxplus^n u_\alpha) \leq \lambda_1(p \boxplus^n u_\alpha) - \lambda_1(u_\alpha)
\]
The operation $p \mapsto p \boxplus^n q$ can be interpreted as spreading out the roots of $p$ (see the discussion at the beginning of \S\ref{sect:interior_roots}). The above expression then says that, as the roots of a polynomial become more spread out, the operation of convolving by $p$ has less of an effect on the largest root.

The natural next question is: can $u_\alpha$ be replaced by a larger class of real-rooted polynomials in the above expression? The answer is encapsulated in our main result, which says that it can be replaced by any real-rooted polynomial.

\begin{theorem}
    Let $p,q,r \in \R^n[t]$ be real-rooted polynomials of degree $n$. We have:
    \[
        \lambda_1(p \boxplus^n q \boxplus^n r) + \lambda_1(r) \leq \lambda_1(p \boxplus^n r) + \lambda_1(q \boxplus^n r)
    \]
\end{theorem}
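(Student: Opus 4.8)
The plan is to lift the statement from largest roots to the $\mathcal{K}$-transform and then pass to a limit. Recall that for real-rooted $s \in \R^n[t]$ of degree $n$ and $\omega > 0$, the point $\mathcal{K}_\omega(s)$ --- the unique $z > \lambda_1(s)$ with $s'(z)/s(z) = \omega$ --- equals $\lambda_1(U_{1/\omega} s)$, is strictly decreasing in $\omega$, and decreases to $\lambda_1(s)$ as $\omega \to \infty$. Hence it suffices to prove the stronger pointwise inequality
\[
    \mathcal{K}_\omega(p \boxplus^n q \boxplus^n r) + \mathcal{K}_\omega(r) \;\leq\; \mathcal{K}_\omega(p \boxplus^n r) + \mathcal{K}_\omega(q \boxplus^n r)
\]
for every $\omega > 0$ and then send $\omega \to \infty$, each of the four terms converging to the corresponding $\lambda_1$. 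Taking $r = t^n$ (the identity for $\boxplus^n$) and using $\mathcal{K}_\omega(t^n) = n/\omega$ recovers exactly Theorem \ref{thm:main_result} in the restated form $\mathcal{K}_\omega(p\boxplus^n q) + n/\omega \le \mathcal{K}_\omega(p)+\mathcal{K}_\omega(q)$, so the displayed inequality is a genuine generalization, and I expect its proof to run parallel to --- but to subsume --- the argument of \cite{finiteconvolutions}.

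Before attacking the pointwise inequality I would reduce the setup. Shift-invariance (Proposition \ref{prop:add_props}) lets me normalize $\lambda_1(r) = 0$, so the target reads $\lambda_1(p\boxplus^n q\boxplus^n r) \le \lambda_1(p\boxplus^n r)+\lambda_1(q\boxplus^n r)$ with its diminishing-returns interpretation, and a routine perturbation lets me assume all polynomials in sight have simple roots. For the core step, following the barrier philosophy of Marcus-Spielman-Srivastava, I would set $z = \mathcal{K}_\omega(p\boxplus^n r)$, $w = \mathcal{K}_\omega(q\boxplus^n r)$, $c = \mathcal{K}_\omega(r)$ and try to show that $U_{1/\omega}(p\boxplus^n q\boxplus^n r)$ is strictly positive on the ray $[z+w-c,\infty)$; since its leading coefficient is positive, this forces $\mathcal{K}_\omega(p\boxplus^n q\boxplus^n r) \le z+w-c$, which is exactly the claim. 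Establishing this positivity is where the combinatorics of $\boxplus^n$ must enter: I would look either for a convolution/expectation representation of $p\boxplus^n q\boxplus^n r$ --- in the spirit of $\mathbb{E}_U \det(xI - A - UBU^* - C)$ with $A,B,C$ diagonal and $U$ Haar-random --- in which the sign is manifest, or for a telescoping identity coupled with convexity. As a guide, in degree $2$ the whole statement collapses to concavity of $c \mapsto \sqrt{c}$ (equivalently of $c \mapsto \mathcal{K}_\omega(t^2-c)$) together with additivity of the finite free cumulants under $\boxplus^2$; this suggests that in general one wants a concavity of $\mathcal{K}_\omega$, viewed as a function on cumulant space, along directions corresponding to honestly real-rooted polynomials.

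The main obstacle is precisely this core positivity (equivalently, concavity) inequality; the rest is bookkeeping and limits. It is worth recording what will \emph{not} work: one cannot simply feed the three-fold convolution polynomial of Proposition \ref{prop:add_hyp} into Theorem \ref{thm:hyp_horns} or Corollary \ref{cor:maj_preservation}, because $\lambda_1$ is not submodular for a general hyperbolic polynomial --- already for $\det$ on diagonal matrices one has $\lambda_1(V+W+U) + \lambda_1(U) > \lambda_1(V+U) + \lambda_1(W+U)$ for suitable symmetric $U,V,W$ --- so the proof must exploit something strictly finer than hyperbolicity of the associated form, namely the precise shape of its subordination functions. Should a closed-form subordination identity prove elusive, I would fall back on an induction on $n$, using derivative-invariance (Proposition \ref{prop:add_props}) to pass between the degree-$n$ and degree-$(n-1)$ convolutions.
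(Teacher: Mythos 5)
Your proposal does not contain a complete proof, and you say as much yourself: the ``core positivity (equivalently, concavity) inequality'' is flagged as the open step, and everything you offer for it (a convolution/expectation representation, a subordination identity, or a fallback induction ``on $n$'') is speculative. So the write-up is a sketch of a strategy, not a proof.

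More specifically, the $\mathcal{K}_\omega$ reformulation you lead with is not actually a reduction. Since $\mathcal{K}_\omega(s) = \lambda_1(U_{1/\omega}s) = \lambda_1(s \boxplus^n u_{1/\omega})$ with $u_{1/\omega}(t) = t^n - (n/\omega)t^{n-1}$, and $\boxplus^n$ is associative, your ``stronger pointwise inequality'' is, after grouping $r$ with $u_{1/\omega}$, precisely the theorem's statement applied to $p$, $q$, and $\tilde r := r \boxplus^n u_{1/\omega}$:
\[
    \lambda_1(p \boxplus^n q \boxplus^n \tilde r) + \lambda_1(\tilde r) \le \lambda_1(p \boxplus^n \tilde r) + \lambda_1(q \boxplus^n \tilde r).
\]
So you would be proving the same statement for a one-parameter family of degree-$n$ real-rooted $\tilde r$'s and then taking $\tilde r \to r$; nothing has been gained. (The degree-$2$ reduction to concavity of $\sqrt{\cdot}$ via cumulant additivity is correct, as is the observation that hyperbolicity alone cannot supply the inequality, but neither advances the general case.) The paper instead proceeds by a direct induction on $\deg(p)$ via a pinching argument: for $p$ with two distinct roots $\lambda_1 > \lambda_k$, one defines $\tilde p_\mu(x) = (x-\mu)^2 \prod_{i\ne 1,k}(x-\lambda_i)$ and $\hat p_\mu = p - \tilde p_\mu$ (which has degree $n-1$ with positive leading coefficient once $\mu > \mu_0 = (\lambda_1+\lambda_k)/2$), shows via interlacing that there is a maximal $\mu_* > \mu_0$ with $\lambda_1(\tilde p_{\mu_*}\boxplus^n r) = \lambda_1(p\boxplus^n r) = \lambda_1(\hat p_{\mu_*}\boxplus^n r)$, and then derives a contradiction with an extremal choice of $p$ by applying the inductive hypothesis to the lower-degree $\hat p_{\mu_*}$. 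If you want to complete a proof, that induction on $\deg(p)$ --- not on $n$ --- is the lever you are missing.
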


To prove this, we adapt and simplify the proof of the original MSS result above. We leave this proof to \S\ref{sect:proof}, where we actually prove slightly more general results.

It is important to note that we were unable to prove this result using the hyperbolicity properties of the additive convolution. This should not be surprising, as morally anything provable for hyperbolic polynomials should come from properties of the eigenvalues of a Hermitian matrix (and this submodularity relation does not hold for matrices in general). We discuss this further in the section when we discuss conjectures regarding interior roots which are analogous to the above theorem.

\section{Strengthening MSS and Associated Conjectures}
Our main result gives an inequality relating the largest (or smallest) roots of additive convolutions of three polynomials, as is done in the MSS paper. The root bound achieved by MSS is crucial to their proof of the paving conjecture, but it is not strong enough to obtain optimal bounds for the paving conjecture. That said, it is believed that root bounds for the interior roots will help to obtain optimal paving bounds. More generally, such root bounds would further clarify how differential operators affect the roots of polynomials.

As we saw in \S\ref{sect:result_statement}, their result can be extended to 3 polynomials in the form of our main result (Theorem \ref{thm:main_result}):
\[
    \lambda_1(p \boxplus^n q \boxplus^n r) + \lambda_1(r) \leq \lambda_1(p \boxplus^n r) + \lambda_1(q \boxplus^n r)
\]
A natural next question becomes: what other inequalities on roots can we achieve in the 3 polynomial case. With this, we arrive at our main collection of conjectures. To simplify the notation, we first make the following definitions.

\begin{definition} \label{def:horns}
    Fix $n \in \N$ and let $I,J,K \subset [n]$. We call $(I,J,K)$ a \emph{Horn's triple} if for all Hermitian $n \times n$ matrices $A,B$ we have:
    \[
        \sum_{i \in I} \lambda_i(A+B) \leq \sum_{j \in J} \lambda_j(A) + \sum_{k \in K} \lambda_k(B)
    \]
    That is, if $(I,J,K)$ give rise to one of Horn's inequalties.
\end{definition}

\begin{definition}
    Fix $n \in \N$ and let $I,L,J,K \subset [n]$. We call $(I,L,J,K)$ a \emph{valid 4-tuple} if for all real-rooted $p,q,r$ of degree $n$ we have:
\[
    \sum_{i \in I} \lambda_i(p \boxplus^n q \boxplus^n r) + \sum_{l \in L} \lambda_l(r) \leq \sum_{j \in J} \lambda_j(p \boxplus^n r) + \sum_{k \in K} \lambda_k(q \boxplus^n r)
\]
\end{definition}

We want to determine all of the valid 4-tuples. It is worth noting that the method of hyperbolic polynomials (which worked for inequalities relating the roots of 2 polynomials) does not work for determining valid 4-tuples. In fact we have the following, even for diagonal matrices:
\[
    \lambda_1(A+B+C) + \lambda_1(C) \not\leq \lambda_1(A+C) + \lambda_1(B+C)
\]
For example, let $A = B = \diag(2,0)$ and $C = \diag(0,2)$.

With these notions in hand, we can now succinctly state our main conjectures. The first is a natural generalization of Horn's inequalities for two polynomials.

\begin{conjecture}
    Let $p,q,r \in \R[x]$ be real-rooted and of degree exactly $n$, and let $(I,J,K)$ be a Horn's triple. Then $(I,I,J,K)$ is a valid 4-tuple.
\end{conjecture}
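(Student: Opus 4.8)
The plan is to run the barrier-function argument behind the $\lambda_1$ case of the conjecture (the case $I=J=K=\{1\}$, which is Theorem~\ref{thm:main_result} and its sharpening in \S\ref{sect:proof}), but keeping track of a whole block of roots rather than only the largest one. Recall that for a simple-rooted $p$ of degree $n$ the barrier $\Phi_p(x)=p'(x)/p(x)=\sum_i (x-\lambda_i(p))^{-1}$ restricts to a strictly decreasing bijection from $(\lambda_1(p),\infty)$ onto $(0,\infty)$ and from each gap $(\lambda_i(p),\lambda_{i-1}(p))$ onto $\R$; write $\mathcal K^{(i)}_w(p)$ for the solution of $\Phi_p(x)=w$ lying in $(\lambda_1(p),\infty)$ when $i=1$ and in $(\lambda_i(p),\lambda_{i-1}(p))$ when $i\ge 2$, so that $\mathcal K^{(1)}_w(p)$ is the transform $\mathcal K_\omega(p)$ of \S\ref{sect:result_statement} (with $\omega=w$) and, for every $i$, $\mathcal K^{(i)}_w(p)\to\lambda_i(p)$ as $w\to\infty$. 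The conjecture would then follow by letting $w\to\infty$ in the $w$-parametrized inequality
\[
\sum_{i\in I}\mathcal K^{(i)}_w(p\boxplus^n q\boxplus^n r)+\sum_{i\in I}\mathcal K^{(i)}_w(r)\le\sum_{j\in J}\mathcal K^{(j)}_w(p\boxplus^n r)+\sum_{k\in K}\mathcal K^{(k)}_w(q\boxplus^n r),
\]
to be proved for every $w>0$ and every Horn triple $(I,J,K)$ (degenerate roots and small $w$ handled by continuity). For $I=J=K=\{1\}$ the $w\to\infty$ limit is exactly the three-polynomial submodularity of \S\ref{sect:result_statement}, and the all-$w$ form should be the ``slightly more general'' statement proved in \S\ref{sect:proof}.

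To establish this $w$-version I would combine two ingredients. First, the combinatorics of the Horn triple $(I,J,K)$ is already available: by Proposition~\ref{prop:add_hyp} the convolution is a hyperbolic polynomial, so Theorem~\ref{thm:hyp_horns} furnishes every two-polynomial Horn relation among the root vectors of $p\boxplus^n r$, $q\boxplus^n r$ and $(p\boxplus^n r)\boxplus^n q$. What is \emph{not} available from hyperbolicity — as the diagonal-matrix counterexample preceding the conjecture shows — is the correction term $\sum_{i\in I}\mathcal K^{(i)}_w(r)$, and here the special nature of $\boxplus^n r$ must enter. Using the realization of $p\boxplus^n q$ as the expected characteristic polynomial $\mathbb E_U\det\!\big(xI-(A+UBU^\ast)\big)$ (with $A,B$ having characteristic polynomials $p,q$ and $U$ Haar-distributed), one would show that $\Phi$ of the convolution, evaluated at the candidate point, obeys the same one-step recursion as in the $\lambda_1$ argument but now branch by branch, with the shift by $\sum_{i\in I}\mathcal K^{(i)}_w(r)$ appearing as the contribution of the ``$r$-block.'' The scale- and shift-invariance of Proposition~\ref{prop:add_props}, together with the majorization-preservation of Corollary~\ref{cor:maj_preservation}, would be used to move between branches and to reduce to extremal root configurations.

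The main obstacle, I expect, is exactly this interior-root barrier lemma. The barrier method is intrinsically a largest-root technique: an interior branch of $\Phi_p$ is monotone only on a bounded interval, the endpoints of that interval (the roots of $p$) themselves move under $p\mapsto p\boxplus^n r$, and the ``diminishing returns'' step of the $\lambda_1$ argument rests on a global monotonicity of $\Phi_p$ on $(\lambda_1(p),\infty)$ that no interior branch enjoys. A secondary difficulty is structural: Horn's triples are generated by the Horn--Klyachko--Knutson--Tao recursion, which decreases $n$, whereas the only degree-lowering operation compatible with $\boxplus^n$ is differentiation (Proposition~\ref{prop:add_props}), and $\lambda_1(D^{i-1}p)\neq\lambda_i(p)$ in general, so the induction does not obviously close. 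A natural intermediate target is the Ky Fan case $I=J=K=\{1,\dots,k\}$, from which one would hope to recover the general case by a Lidskii--Wielandt-type majorization argument; but even this case seems to require a new idea, since replacing each polynomial by its $k$-th additive-compound polynomial destroys precisely the convolution structure the barrier argument relies on.
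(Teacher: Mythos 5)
This statement is recorded in the paper as an open conjecture --- no proof is offered --- so there is no paper proof to compare against. Your write-up is explicit that it is a research plan rather than an argument, and your diagnosis of the obstacles is essentially sound and consistent with the paper's stance: the barrier (Cauchy-transform) machinery rests on the global monotonicity of $\Phi_p$ on $(\lambda_1(p),\infty)$, which no interior branch enjoys; Horn's triples are generated by an $n$-decreasing recursion for which differentiation is not the right degree-lowering analogue (since $\lambda_1(D^{i-1}p)\neq\lambda_i(p)$); and, as the diagonal-matrix example preceding the conjecture shows, pure hyperbolicity cannot account for the $+\sum_{i\in I}\lambda_i(r)$ term.

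One genuine slip is worth flagging, because it overstates what is ``already available.'' You assert that Proposition~\ref{prop:add_hyp} together with Theorem~\ref{thm:hyp_horns} furnishes the Horn relations among $\lambda(p\boxplus^n r)$, $\lambda(q\boxplus^n r)$, and $\lambda\big((p\boxplus^n r)\boxplus^n q\big)$, with only the correction term $\sum_{i\in I}\lambda_i(r)$ missing. That is not what the hyperbolicity argument gives. To realize $\lambda(p\boxplus^n r)$ and $\lambda(q\boxplus^n r)$ as $\lambda(v)$ and $\lambda(w)$ under the hyperbolic polynomial of Proposition~\ref{prop:add_hyp} one would set $v=(0,\roots(p),\roots(r))$ and $w=(0,\roots(q),\roots(r))$; but then $\lambda(v+w)$ is the root vector of a convolution of two polynomials whose roots are the coordinatewise sums $\lambda_i(p)+\lambda_i(q)$ and $2\lambda_i(r)$, which is neither $p\boxplus^n q\boxplus^n r$ nor $(p\boxplus^n r)\boxplus^n(q\boxplus^n r)$. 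What one \emph{can} extract from hyperbolicity is the weaker family of Horn relations among $\lambda(p\boxplus^n r)$, $\lambda(q)$, and $\lambda(p\boxplus^n q\boxplus^n r)$ (take $v=(0,\roots(p\boxplus^n r),0)$ and $w=(0,0,\roots(q))$). Upgrading $\lambda_k(q)$ to $\lambda_k(q\boxplus^n r)$ on the right while inserting $\sum_{i\in I}\lambda_i(r)$ on the left is the full content of the conjecture; it is not a correction sitting atop an already-proved Horn inequality, which is part of why the problem falls outside the hyperbolicity toolkit entirely.
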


Note that the indices of the left-hand side of the inequality are the same for both polynomials. But perhaps this does not have to be the case here? That is, can we pick $L \neq I$ such that the inequality for $(I,L,J,K)$ is stronger than
the inequality for $(I,I,J,K)$, and yet it is still a valid 4-tuple?

This turns out to be a difficult question in general. However, we state a few conjectures in this direction. The first would be a negative result if true.

\begin{conjecture}
    Let $p,q,r \in \R[x]$ be real-rooted and of degree exactly $n$, and let $(I,L,J,K)$ be a valid 4-tuple. Then $(I,J,K)$ is a Horn's triple.
\end{conjecture}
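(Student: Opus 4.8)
The plan is to first strip off the third polynomial $r$ by a specialization, reducing the statement to a converse of the hyperbolicity estimate of Theorem~\ref{thm:hyp_horns}, and then to attack that converse by pushing a Hermitian counterexample through the unitary average that is built into $\boxplus^n$.

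\textbf{Step 1 (elimination of $r$).} Since $\lambda(t^n) = (0,\dots,0)$ and $p \boxplus^n t^n = p$ for all $p$ (both immediate from the defining formula for $\boxplus^n$), substituting $r = t^n$ into the definition of a valid $4$-tuple gives, for all real-rooted $p,q \in \R^n[t]$ of degree exactly $n$,
\[
    \sum_{i \in I} \lambda_i(p \boxplus^n q) \;\leq\; \sum_{j \in J} \lambda_j(p) + \sum_{k \in K} \lambda_k(q). \tag{$\star$}
\]
Feeding $p = (t-a)^n$ and $q = (t-b)^n$ into $(\star)$ and using shift-invariance (Proposition~\ref{prop:add_props}), so that $p\boxplus^n q = (t-a-b)^n$, yields $|I|(a+b) \le |J|a + |K|b$ for all $a,b\in\R$, hence $|I| = |J| = |K|$. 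So it suffices to prove: \emph{if $(\star)$ holds for all real-rooted degree-$n$ $p,q$, then $(I,J,K)$ is a Horn's triple.} This is precisely the converse of the implication furnished by Theorem~\ref{thm:hyp_horns} via Proposition~\ref{prop:add_hyp}, and since it discards the hypothesis for all $r \neq t^n$ it is a cleaner (and a priori stronger) target, with other choices of $r$ available as a fallback.

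\textbf{Step 2 (contrapositive).} Suppose $(I,J,K)$ is \emph{not} a Horn's triple, witnessed by diagonal $A = \diag(\alpha)$, $B = \diag(\beta)$ (after a small generic perturbation) with $\sum_{i\in I}\lambda_i(A+B) > \sum_{j\in J}\alpha_j + \sum_{k\in K}\beta_k$. The natural candidates are $p = \prod_i(t-\alpha_i)$ and $q = \prod_i(t-\beta_i)$, for which the right-hand side of $(\star)$ is exactly $\sum_J\alpha_j + \sum_K\beta_k$ and, by the Marcus--Spielman--Srivastava identity, $p \boxplus^n q = \mathbb{E}_U \det\!\big(tI - A - UBU^{\ast}\big)$. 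What one needs is that the partial root-sums of this averaged polynomial still clear the threshold, i.e.\ $\sum_{i\in I}\lambda_i(p\boxplus^n q) > \sum_J\alpha_j + \sum_K\beta_k$. In the cases I have checked this holds even though $\sum_{i\in I}\lambda_i(p\boxplus^n q)$ need \emph{not} dominate $\sum_{i\in I}\lambda_i(A+B)$: e.g.\ for $n=3$ the matrix counterexample $A=B=\diag(1,1,0)$ to $(\{1,3\},\{1,3\},\{1,3\})$ becomes $p=q=t(t-1)^2$, and $t(t-1)^2\boxplus^3 t(t-1)^2 = (t-2)\big(t^2-2t+\tfrac23\big)$ has roots $2,\,1\pm\tfrac1{\sqrt3}$, giving $\lambda_1+\lambda_3 = 3-\tfrac1{\sqrt3} > 2$. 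To make this systematic one would try to lower-bound $\sum_{i\in I}\lambda_i$ of the averaged polynomial --- for instance through the interlacing-family structure of $\{\det(tI - A - UBU^{\ast})\}_U$ coupled with the $I$-additive compound, or by exhibiting, for each non-Horn $(I,J,K)$, explicit degenerate witnesses $t^{\,n-m}(t-1)^m$ matched to the gap pattern of $I$ (the computation above, and the analogous $n=4$ one giving roots $2,2,\tfrac32,\tfrac12$, are instances).

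\textbf{Main obstacle.} Step~2 is the crux and is where a genuinely new idea seems necessary. The convolution $\boxplus^n$ averages $A + UBU^{\ast}$ over the unitary group, which smooths the spectrum --- in general it \emph{strictly decreases} $\lambda_1$, e.g.\ $\lambda_1(p\boxplus^2 q) = 1 + \tfrac1{\sqrt2} < 2 = \lambda_1(A+B)$ for $A=B=\diag(1,0)$ --- so a violated non-Horn inequality cannot be transported verbatim and must be shown to survive the averaging. Equivalently, one is asserting that the ``finite free eigenvalue body'' $\{(\lambda(p),\lambda(q),\lambda(p\boxplus^n q))\}$ is cut out, among inequalities of Horn type, by exactly the Hermitian Horn inequalities, despite being lower-dimensional than the Hermitian eigenvalue polytope. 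Proving this quantitatively looks to require either the recursive Horn/Klyachko/Knutson--Tao structure transported to real-rooted polynomials, or a root-bound technique more robust than the barrier method --- precisely the tool whose absence this paper highlights --- and I would regard a convincing treatment of Step~2 as the entire difficulty.
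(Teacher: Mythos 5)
This statement is a \emph{conjecture} in the paper, not a theorem: the authors state it without proof, explicitly framing its resolution as requiring ``more robust techniques'' beyond the barrier method. So there is no paper proof for you to be compared against, and the correct headline assessment is that your write-up is, accurately and by your own admission, not a proof but a reduction plus a statement of the open core.

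That said, your Step~1 is correct and is a genuinely useful observation. The specialization $r = t^n$ is legitimate (from the defining sum, $D^{n-k}t^n\big|_{t=0}$ vanishes unless $k=0$, so $p \boxplus^n t^n = p$), it reduces the conjecture to the clean two-polynomial statement $(\star)$, and the further specialization $p=(t-a)^n$, $q=(t-b)^n$ correctly forces $|I|=|J|=|K|$. Your $n=3$ computation checks out: $t(t-1)^2 \boxplus^3 t(t-1)^2 = (t-2)(t^2-2t+\tfrac23)$, giving $\lambda_1+\lambda_3 = 3-\tfrac1{\sqrt3} > 2$, which does violate $(\star)$ for the non-Horn triple $(\{1,3\},\{1,3\},\{1,3\})$. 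This is consistent with the conjecture and is nontrivial evidence: note it is not automatic, since as you correctly observe the unitary averaging inside $\boxplus^n$ can strictly \emph{decrease} partial root sums (your $n=2$ example $\lambda_1 = 1+\tfrac1{\sqrt2} < 2$ shows this), so a matrix witness to a non-Horn triple does not transport verbatim.

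The gap is exactly where you say it is: Step~2. You have no mechanism for lower-bounding $\sum_{i\in I}\lambda_i(p\boxplus^n q)$ uniformly over non-Horn $(I,J,K)$, and the two candidate toolkits you gesture at (interlacing families on $I$-compounds; explicit degenerate witnesses $t^{n-m}(t-1)^m$) are sketches rather than arguments. One further caution: your Step~1 deliberately discards the freedom in $r$, which makes the target statement ``a priori stronger'' as you note; it is conceivable that the conjecture as stated (quantified over all $r$) is true while the $r=t^n$ specialization alone does not suffice to force Hornness, so if a counterexample to your strengthened target surfaced you would need to reinstate the $r$-degree of freedom rather than conclude the paper's conjecture false. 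As it stands, your proposal is a faithful description of why this is open, not a resolution of it.
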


Of course you can make the set $L$ a ``weaker'' set of indices than $I$ (meaning that the inequality for $(I,L,J,K)$ is logically weaker than the inequality for $(I,I,J,K)$) to get a new valid 4-tuple. Since such inequalities follow from the conjecture given above, we will ignore these 4-tuples. That said, the only question left is just how much ``stronger'' the set $L$ can be. We give yet another conjecture regarding this question, albeit only in the case where $|I|=|L|=|J|=|K|=1$. To ease notation, we say that $(i,j,k)$ and $(i,l,j,k)$ are a Horn's triple and a valid 4-tuple, respectively (replace singleton sets with the single index).

\begin{conjecture}
    Let $p,q,r \in \R[x]$ be real-rooted and of degree exactly $n$, and let $(i, j, k)$ be a Horn's triple. Note that this is equivalent to $i \geq j+k-1$ (see the Weyl inequalities above, which are strongest Horn's triples of this form). Then $(i, \max(j,k), j, k)$ and $(i, n+1-\max(j,k), j, k)$ are valid 4-tuples.
\end{conjecture}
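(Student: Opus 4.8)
The first step is to make two routine reductions. Write $P := p\boxplus^n r$, $Q := q\boxplus^n r$, $S := p\boxplus^n q\boxplus^n r$ and $m := \max(j,k)$. Since $p\boxplus^n q = q\boxplus^n p$, the $4$-tuple $(i,l,j,k)$ is valid iff $(i,l,k,j)$ is, so we may assume $j\geq k$, whence $m=j$; and since $\lambda_i$ is non-increasing in $i$ and enters the inequality only through $\lambda_i(S)$, it suffices to take $i=j+k-1$ (which lies in $[n]$ because $(i,j,k)$ is a Horn triple). The two claims then read
\[
    \lambda_{j+k-1}(S)+\lambda_{j}(r)\leq\lambda_{j}(P)+\lambda_{k}(Q)
    \qquad\text{and}\qquad
    \lambda_{j+k-1}(S)+\lambda_{n+1-j}(r)\leq\lambda_{j}(P)+\lambda_{k}(Q).
\]
The first is the informative one when $j\leq\frac{n+1}{2}$ and the second when $j>\frac{n+1}{2}$ (together they assert that $\lambda_{\min(j,\,n+1-j)}(r)$ may be placed on the left). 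Moreover the reflection $f(t)\mapsto(-1)^n f(-t)$, which commutes with $\boxplus^n$ by scale-invariance, sends a valid $4$-tuple to a lower bound of a different shape, so it does not relate these two — both must be proved. I would get the first from a ``top-root'' barrier argument and the second from the mirror ``bottom-root'' one.

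The plan is to transplant the barrier / inverse-Cauchy-transform proof of Theorem~\ref{thm:main_result} and of our main result from the extreme root to the interior roots. Some such route seems necessary: the hyperbolic-polynomial tools of \S\ref{sect:interior_roots} cannot produce these inequalities, as they already fail for diagonal matrices when $j=k=1$. The setup I would record first is the family of interior branches of the finite Cauchy transform: for real-rooted $f$ of degree $n$ with roots $\lambda_1\geq\cdots\geq\lambda_n$ and $0\leq\ell\leq n-1$, the function $\Phi_f=f'/f=\sum_i(x-\lambda_i)^{-1}$ restricts to a strictly decreasing bijection from $(\lambda_{\ell+1}(f),\lambda_\ell(f))$ onto $\R$ (with $\lambda_0:=+\infty$), since $\Phi_f'=-\sum_i(x-\lambda_i)^{-2}<0$; write $\mathcal{K}^{(\ell)}_\omega(f)$ for its inverse. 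One checks that for small $\alpha>0$ the $\ell$-th largest root of $U_\alpha f=(1-\alpha D)f=f\boxplus^n u_\alpha$ is $\mathcal{K}^{(\ell-1)}_{1/\alpha}(f)$, and $\mathcal{K}^{(\ell-1)}_\omega(f)\to\lambda_\ell(f)$ as $\omega\to\infty$; in particular $\lambda_\ell(f)=\lim_{\alpha\to0^+}\lambda_\ell(U_\alpha f)$. This puts each root of $f$ on the footing that $\lambda_1$ enjoyed in \cite{finiteconvolutions}.

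The crux is an interior-branch analogue of the transform inequality behind our main result: informally, for all large $\omega$,
\[
    \mathcal{K}^{(j+k-2)}_\omega(S)+\mathcal{K}^{(j-1)}_\omega(r)\leq\mathcal{K}^{(j-1)}_\omega(P)+\mathcal{K}^{(k-1)}_\omega(Q),
\]
which on letting $\omega\to\infty$ gives the first inequality. The branch indices are dictated by how the roots of $P$, $Q$, $r$ and $S$ interlace after convolving with $u_\alpha$; I expect the Weyl identity $i=j+k-1$ to be exactly what makes the number of poles of $\Phi$ lying to the right of the evaluation point agree across all four polynomials, which is what a barrier argument needs in order to bootstrap. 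I would try either (a) to rerun the Marcus--Spielman--Srivastava barrier-shift estimate while carrying along these extra right-hand poles and checking that they only help, or (b) to induct on $n$: peel one root off each polynomial, using $D^a f=\frac{n!}{(n-a)!}(f\boxplus^n t^{n-a})$ and the interlacing $\lambda_\ell(f)\geq\lambda_\ell(Df)\geq\lambda_{\ell+1}(f)$, so as to reduce the degree-$n$ interior-root statement to a degree-$(n-1)$ interior-root statement together with the degree-$n$ extreme-root statement. The naive form of (b) — feeding $D^{j-1}p$, $D^{k-1}q$, $r$ into the main result — is too weak, because it replaces $\lambda_j(P)$ by the strictly larger $\lambda_1(D^{j-1}(p\boxplus^n r))$; the real content of (b) is cancelling exactly this slack.

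The mirror inequality $(j+k-1,\,n+1-j,\,j,\,k)$ is substantive only for $j>\frac{n+1}{2}$, where it is genuinely stronger than the first family rather than a consequence of it. I would obtain it by rerunning the same barrier argument on the ``low'' branches of $\Phi$ — on the intervals just above the smallest roots, where it is $\omega\to-\infty$ that selects the desired root — i.e.\ the earlier computation reflected through the point at infinity; equivalently, reflect a dualized form of the crux inequality by $f\mapsto(-1)^n f(-t)$ and re-index. The crux inequality is where I expect essentially all the difficulty to lie: adapting the single-pole barrier method to a prescribed interior branch of $\Phi$ while retaining the optimal $r$-term is precisely the kind of step this paper flags as currently requiring ad hoc analysis, and it may well need a new idea rather than a more careful version of the known argument. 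As evidence that the statement is correct and a plausible way in, for $n=2$ both inequalities collapse, once the root-sum terms cancel identically, to elementary inequalities among $\sqrt{x+z}$, $\sqrt{y+z}$, $\sqrt{x+y+z}$ and $\sqrt{z}$ for $x,y,z\geq0$, with equality exactly when one of $p,q$ has a repeated root; this suggests first proving the conjecture on the locus where some polynomial is degenerate and then propagating by continuity or convexity in the roots.
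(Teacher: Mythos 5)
This statement is a \emph{conjecture} in the paper, not a theorem: the authors explicitly present it as open, offer no proof, and indeed flag the entire family of ``valid 4-tuple'' questions as requiring tools beyond what the paper develops. So there is no paper proof to compare your argument against, and your proposal should be judged on its own terms as a candidate resolution.

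On those terms, it does not constitute a proof, and you say as much yourself. The reductions at the top are fine: using symmetry in $j,k$ to assume $j\geq k$, and monotonicity of $i\mapsto\lambda_i(S)$ to reduce to the extremal Weyl case $i=j+k-1$, are both correct, as is the observation that the reflection $f(t)\mapsto(-1)^n f(-t)$ commutes with $\boxplus^n$ but turns an upper-bound 4-tuple into a lower-bound statement of a different shape (so it cannot derive the $(i,\,n+1-\max(j,k),\,j,\,k)$ family from the $(i,\,\max(j,k),\,j,\,k)$ family). The definition of the interior branches $\mathcal{K}^{(\ell)}_\omega$ and the identification of $\lambda_\ell(U_\alpha f)$ with $\mathcal{K}^{(\ell-1)}_{1/\alpha}(f)$ for small $\alpha$ are also sound. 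But the entire weight of the argument rests on the displayed ``crux inequality''
\[
    \mathcal{K}^{(j+k-2)}_\omega(S)+\mathcal{K}^{(j-1)}_\omega(r)\leq\mathcal{K}^{(j-1)}_\omega(P)+\mathcal{K}^{(k-1)}_\omega(Q),
\]
and for that you give only two candidate strategies, each with a named obstruction: the naive induction (b) provably loses exactly the slack you need (you compute this yourself), and the barrier-shift route (a) is asserted to ``only help'' without any argument that the extra right-hand poles do not spoil the convexity or monotonicity estimates the MSS barrier method relies on. This is the step the paper itself singles out as the obstacle — the hyperbolicity toolkit fails here, and the MSS barrier has so far been run only at the top root — so asserting it without proof leaves the conjecture exactly where the paper leaves it.

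One concrete caution about the heuristic you lean on: the claim that $i=j+k-1$ ``makes the number of poles of $\Phi$ to the right of the evaluation point agree across all four polynomials'' is plausible for generic configurations but is not automatic under convolution with $u_\alpha$; the interlacing you cite, $\lambda_\ell(f)\geq\lambda_\ell(Df)\geq\lambda_{\ell+1}(f)$, controls $D$, not $\boxplus^n u_\alpha$ at finite $\alpha$, and the branch index selected by $\lambda_\ell(U_\alpha f)$ can jump as $\alpha$ grows. Any actual barrier argument would need to pin down the relevant branch uniformly over the range of $\omega$ used, not just in the $\omega\to\infty$ limit. The $n=2$ verification and the ``prove on the degenerate locus then propagate'' suggestion are reasonable sanity checks and possible footholds, but neither closes the gap.

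In short: there is no proof in the paper to compare against, and your proposal, while well-structured and mathematically careful in its reductions, is a research sketch with its central lemma explicitly unproven, not a proof of the conjecture.
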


Notice that for small $j,k$ the first 4-tuple given in the above conjecture is stronger, and for large $j,k$ the second 4-tuple given in above conjecture is stronger.

\section{The Multivariate Case}

All of the root bounds and conjectures discussed in this paper thus far have been for univariate polynomials. However, in their resolution of Kadison-Singer, Marcus-Spielman-Srivastava give bounds on how the \emph{points above the roots} of a given multivariate polynomial change under the action of differential operators. This prompts an obvious question: are there multivariate generalizations of the root bounds discussed in this paper?

To attempt to answer this, we will give the natural multivariate generalization of the additive convolution, along with some basic analogous results. But first we define the points above the roots of a polynomial, and state a few of its properties. This notion should be interpreted as a multivariate generalization of the largest root of a polynomial.

Recall that a polynomial $p \in \R[x_1,...,x_n]$ is \emph{real stable} if it does not vanish when all inputs are in the upper half-plane. We also use $\R^\gamma[x_1,...,x_n]$ with non-negative integer vector $\gamma$ to denote the set of polynomials of degree at most $\gamma_i$ in $x_i$ for all $i$.

\begin{definition}
    For real stable $p \in \R^\gamma[x_1,...,x_n]$, we say that $a \in \R^n$ is \emph{above the roots of $p$} if $p(a + y) \neq 0$ for all $y \in \R_{++}^n$ (strictly positive real vectors). We also let $\Ab(p)$ denote the set of all points above the roots of $p$. Note that this differs slightly from the usual definition, in that $a \in \Ab(p)$ does \emph{not} imply $p(a) \neq 0$. For the sake of simplicity, we say $\Ab(p) = \R^n$ for $p \equiv 0$.
\end{definition}

Note that in the univariate case, $\Ab(p)$ is the interval $[\maxroot(p), \infty)$.

\begin{proposition} \label{prop:ab_connected}
    Let $p \in \R[x_1,...,x_n]$ be real stable. Then $\Ab(p)$ is convex and is the closure of a connected component of the non-vanishing set of $p$.
\end{proposition}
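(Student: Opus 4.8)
The plan is to realize $\Ab(p)$ as an affine slice of the closed hyperbolicity cone of the homogenization of $p$, so that both convexity and the ``closure of a connected component'' assertion follow from G\aa rding's theorem. If $p\equiv 0$ there is nothing to prove, so assume $p$ is real stable of total degree $d$ with $p\not\equiv 0$, and record two preliminaries. (i) For every $v\in\R_{++}^n$ and every $a\in\R^n$, the univariate polynomial $t\mapsto p(a+tv)$ is real-rooted and not identically zero: if $\Im t\neq 0$ then every coordinate of $a+tv$ has imaginary part $v_j\Im t$ of one nonzero sign, so $a+tv$ lies where $p$ does not vanish (by real stability and reality of the coefficients), whence $p(a+tv)$ has only real zeros and is not the zero polynomial. (ii) $\Ab(p)$ is closed and $\Ab(p)+\R_{\geq 0}^n\subseteq\Ab(p)$; closedness holds because if $a_k\to a$ with $a_k\in\Ab(p)$ and $p(z_0)=0$ with $z_0-a\in\R_{++}^n$, then $z_0-a_k\in\R_{++}^n$ for large $k$, contradicting $a_k\in\Ab(p)$.

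Set $\tilde p(x_0,\dots,x_n):=x_0^d\,p(x_1/x_0,\dots,x_n/x_0)$, homogeneous of degree $d$; it is again real stable, and $\tilde p(0,x_1,\dots,x_n)=p^{\mathrm{top}}(x_1,\dots,x_n)$ where $p^{\mathrm{top}}$ is the top-degree part of $p$ (also real stable, and nonzero since $\deg p=d$). Using the standard facts that homogenization preserves real stability and that a nonzero homogeneous real stable polynomial does not vanish on $\R_{++}^n$, we may --- after rescaling the variables by a positive vector (which rescales $\Ab(p)$ the same way and preserves both conclusions) and possibly replacing $p$ by $-p$ --- assume $p^{\mathrm{top}}(\mathbf 1)>0$, where $\mathbf 1=(1,\dots,1)$. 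Then $\tilde p$ is hyperbolic with respect to $\mathbf e:=(0,1,\dots,1)$: indeed $\tilde p(\mathbf e)=p^{\mathrm{top}}(\mathbf 1)>0$, and for any $x\in\R^{n+1}$ the polynomial $t\mapsto\tilde p(x+t\mathbf e)$ is real-rooted, since specializing the $x_0$-variable of the real stable $\tilde p$ to a real number leaves a real stable polynomial in $x_1,\dots,x_n$, which does not vanish once $x_1+t,\dots,x_n+t$ all have nonzero imaginary part of one sign, and this polynomial in $t$ is not identically zero because its leading coefficient is $p^{\mathrm{top}}(\mathbf 1)\neq 0$. Let $\Gamma$ be the open hyperbolicity cone of $\tilde p$ at $\mathbf e$. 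By G\aa rding's theorem \cite{renegar2006hyperbolic}: $\Gamma$ is an open convex cone equal to the connected component of $\{\tilde p\neq 0\}$ containing $\mathbf e$; its closure $\overline\Gamma$ is a closed convex cone; and $x\in\Gamma$ if and only if every zero of the (real-rooted) polynomial $t\mapsto\tilde p(x+t\mathbf e)$ is negative.

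The heart of the proof is the identity $\{a\in\R^n:(1,a)\in\Gamma\}=\{a\in\Ab(p):p(a)\neq 0\}=:\Ab^{\circ}(p)$. Since $\tilde p((1,a)+t\mathbf e)=p(a+t\mathbf 1)$, the root characterization of $\Gamma$ says $(1,a)\in\Gamma$ iff $p(a+t\mathbf 1)\neq 0$ for all $t\geq 0$, and this is clearly implied by $a\in\Ab^{\circ}(p)$. For the reverse implication one must pass from the single diagonal direction $\mathbf 1$ to every direction in $\R_{++}^n$, and here the rigidity of real stability is essential: because $p^{\mathrm{top}}>0$ on $\R_{++}^n$, for each $y\in\R_{++}^n$ the polynomial $\tilde p((0,y)+t\mathbf e)=p^{\mathrm{top}}(y+t\mathbf 1)$ is nonzero for every $t\geq 0$, so $(0,y)\in\Gamma$; hence if $(1,a)\in\Gamma$ then $(1,a+y)=(1,a)+(0,y)\in\Gamma+\Gamma\subseteq\Gamma$ (a convex cone), so $p(a+y)\neq 0$, i.e. $a\in\Ab(p)$.

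Finally we assemble. The set $\Ab^{\circ}(p)=\Gamma\cap\{x_0=1\}$ is nonempty (rescale a point of $\Gamma$ near $\mathbf e$ to have first coordinate $1$), open, convex as an affine slice of the convex set $\Gamma$, and a connected component of the non-vanishing set $\{a:p(a)\neq 0\}$ --- it is open and closed there, being the slice of the full component $\Gamma$ of $\{\tilde p\neq 0\}$, and it is connected. Moreover $\Ab(p)=\overline{\Ab^{\circ}(p)}$: the inclusion $\supseteq$ holds since $\Ab(p)$ is closed and contains $\Ab^{\circ}(p)$, and $\subseteq$ since for $a\in\Ab(p)$ and $R>0$ one has $a+R\mathbf 1\in\Ab^{\circ}(p)$ (by monotonicity and $R\mathbf 1\in\R_{++}^n$) while $a+R\mathbf 1\to a$. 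Thus $\Ab(p)$ is convex and is the closure of a connected component of the non-vanishing set. The only non-formal step is the identity $\{(1,a)\in\Gamma\}=\Ab^{\circ}(p)$, i.e. that ``above the roots'' is detected by the diagonal direction alone; everything else is bookkeeping or a direct appeal to G\aa rding. A reader attempting a purely complex-analytic proof --- say by adjoining one variable for the convex-combination parameter --- is apt to get stuck precisely here, as the relevant tube domain degenerates when the two endpoints disagree in sign in some coordinate.
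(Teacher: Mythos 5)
Your proof fills in the details the paper leaves to the cited reference on hyperbolic polynomials and follows the same route that reference would suggest: homogenize $p$, apply G\aa rding's structure theorem to the hyperbolicity cone, and intersect with the affine slice $\{x_0=1\}$. The key identity $\{a : (1,a)\in\Gamma\}=\Ab^\circ(p)$, the use of $(0,y)\in\Gamma$ for $y\in\R_{++}^n$ to pass from the single diagonal direction $\mathbf 1$ to all of $\R_{++}^n$, and the closure step $\Ab(p)=\overline{\Ab^\circ(p)}$ are all correct, and together they give exactly the two assertions of the proposition. So the architecture is sound and matches the paper.

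There is, however, one false intermediate claim: you invoke as a ``standard fact'' that homogenization preserves real stability, and this is not true. For instance $p(x)=x-1$ is real stable, but its homogenization $\tilde p(x_0,x_1)=x_1-x_0$ vanishes at $(i,i)$, a point of the open upper-half-plane product. Fortunately the error is local, because the two consequences you draw from it are each true for other reasons. First, $p^{\mathrm{top}}$ is real stable (or zero) not because $\tilde p$ is, but because $p^{\mathrm{top}}(x)=\lim_{c\to\infty}c^{-d}p(cx)$ is a locally uniform limit of real stable polynomials, and it is nonzero since $\deg p=d$. Second, the specialization $\tilde p(c,\cdot)$ for real $c$ is real stable not by specializing a ``real stable $\tilde p$,'' but directly: for $c>0$ it equals $c^d p(\cdot/c)$, a positive diagonal rescaling of $p$; for $c<0$ it is a rescaling by a negative scalar, which still preserves real stability since $p$ has real coefficients (so $p$ is also nonvanishing on the lower half-plane product); and $c=0$ gives $p^{\mathrm{top}}$. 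With these replacements the hyperbolicity of $\tilde p$ with respect to $\mathbf e=(0,\mathbf 1)$ is established exactly as you intend, and the rest of your argument goes through unchanged. (What is true, and is the precise statement you need, is that the homogenization of a degree-$d$ real stable polynomial $p$ is hyperbolic in direction $(0,v)$ for every $v\in\R_{++}^n$ with $p^{\mathrm{top}}(v)\neq 0$.)
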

\begin{proof}
    Follows from the theory of hyperbolic polynomials. E.g., see \cite{renegar2006hyperbolic}.
\end{proof}

With the notion of $\Ab(p)$ in hand, we now define and discuss the multivariate version of the additive convolution including a multivariate generalization of the triangle inequality.

\begin{definition}
    For $p,q \in \R^\gamma[x_1,...,x_n]$ we define the bilinear function:
    \[
        (p \boxplus^\gamma q)(x) := \sum_{0 \leq \mu \leq \gamma} \partial_x^\mu p(x) \cdot \partial_x^{\gamma-\mu} q(0)
    \]
\end{definition}

\begin{proposition}
    Let $p,q \in \R^\gamma[x_1,...,x_n]$ be real stable polynomials. We have the following:
    \begin{enumerate}
        \item (Symmetry) $p \boxplus^\gamma q = q \boxplus^\gamma p$
        \item (Shift-invariance) $(p(x+a) \boxplus^\gamma q)(x) = (p \boxplus^\gamma q)(x+a) = (p \boxplus^\gamma q(x+a))(x)$ for $a \in \R^n$
        \item (Scale-invariance) $(p(ax) \boxplus^\gamma q(ax))(x) = a^\gamma \cdot (p \boxplus^\gamma q)(ax)$ for $a \in \R^n$
        \item (Derivative-invariance) $(\partial_{x_k}p) \boxplus^\gamma q = \partial_{x_k} (p \boxplus^\gamma q) = p \boxplus^\gamma (\partial_{x_k} q)$ for all $k \in [n]$
        \item (Stability-preserving) $p \boxplus^\gamma q$ is real stable
        \item (Triangle inequaltiy) $\Ab(p \boxplus^\gamma q) \supseteq \Ab(p) + \Ab(q)$, where $+$ is Minkowski sum
    \end{enumerate}
\end{proposition}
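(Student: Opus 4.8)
The plan is to prove all six statements in parallel with the univariate case (Proposition~\ref{prop:add_props}), since the first five are routine coordinate-wise generalizations, and then to focus the real work on the triangle inequality (6). For (1)--(4), I would simply unwind the definition $(p \boxplus^\gamma q)(x) = \sum_{0 \le \mu \le \gamma} \partial_x^\mu p(x)\,\partial_x^{\gamma - \mu}q(0)$: symmetry follows from reindexing $\mu \mapsto \gamma - \mu$ together with Taylor's theorem (which lets one rewrite $\partial^{\gamma-\mu}q(0)$ as a coefficient that pairs symmetrically), shift-invariance and scale-invariance follow by the chain rule applied to each variable separately (exactly as in the one-variable proof, done $n$ times), and derivative-invariance is immediate from $\partial_{x_k}$ commuting with the sum and shifting the multi-index. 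These I would state tersely, noting they mirror the univariate arguments.

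For (5), stability-preservation, the clean approach is to reduce to the univariate closure properties. One way: note that $p \boxplus^\gamma q$ can be obtained by iterating the one-variable finite free convolution variable-by-variable, i.e.\ $\boxplus^\gamma = \boxplus^{\gamma_1} \circ \cdots \circ \boxplus^{\gamma_n}$ where $\boxplus^{\gamma_i}$ acts only on $x_i$ (this should follow by expanding the multi-index sum as an iterated sum). Since the one-variable finite free convolution preserves real-rootedness in a single variable with the other variables as parameters — here one invokes the standard fact that real stability is equivalent to real-rootedness in each variable after restriction, or more robustly that $\boxplus^n$ in one variable is a composition of operators of the form $(1 - \alpha D)$ which are real-stability preservers in the Borcea--Br{\"a}nd{\'e}n sense — iterating preserves real stability. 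Alternatively one can cite the hyperbolicity viewpoint used in Proposition~\ref{prop:add_hyp} adapted to the multivariate setting. I expect to present the iteration argument as the main line.

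The heart of the proposition is (6): $\Ab(p \boxplus^\gamma q) \supseteq \Ab(p) + \Ab(q)$. Here I would first handle the one-variable case hidden inside, then lift. Fix $a \in \Ab(p)$ and $b \in \Ab(q)$; I want to show $a + b \in \Ab(p \boxplus^\gamma q)$, i.e.\ $(p \boxplus^\gamma q)(a + b + y) \neq 0$ for every $y \in \R^n_{++}$. By shift-invariance (property 2), $(p \boxplus^\gamma q)(a+b+y) = (p(x+a) \boxplus^\gamma q(x+b))(y)$, so replacing $p,q$ by their shifts it suffices to show: if $0 \in \Ab(p)$ and $0 \in \Ab(q)$, then $0 \in \Ab(p \boxplus^\gamma q)$, i.e.\ $(p \boxplus^\gamma q)(y) \neq 0$ for all $y \in \R^n_{++}$. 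Now $0 \in \Ab(p)$ means $p$ does not vanish on $\R^n_{++}$; combined with real stability this puts $p$ (up to sign) in a class of polynomials nonvanishing on a region large enough to apply a Hermite--Biehler / half-plane argument. The cleanest route is via the one-variable case of Theorem~\ref{thm:main_result}-style reasoning combined with the iterated structure: along each variable $x_i$, convolution by the corresponding factor of $q$ moves $\Ab$ by the triangle inequality $\lambda_1(\cdot \boxplus^{\gamma_i} \cdot) \le \lambda_1 + \lambda_1$ from Proposition~\ref{prop:add_props}(6), now applied with the other coordinates frozen at positive real values so that all relevant univariate polynomials are real-rooted (this uses real stability to guarantee real-rootedness of restrictions). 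Iterating over $i = 1, \dots, n$ and tracking that "above the roots in coordinate $i$" is preserved at each step gives $a + b \in \Ab(p \boxplus^\gamma q)$.

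The main obstacle will be step (6), and within it the passage from coordinatewise univariate root bounds to the genuinely multivariate statement about $\Ab$: one must be careful that $\Ab(p)$ is \emph{not} a product of intervals, so "freeze the other coordinates and apply the univariate triangle inequality" needs the convexity and connectedness of $\Ab(p)$ from Proposition~\ref{prop:ab_connected} to glue the slices consistently. Concretely, I would argue that for $y \in \R^n_{++}$, the point $a + b + y$ lies above the roots because one can write a path from deep inside $\Ab(p)+\Ab(q)+\R^n_{++}$ (where nonvanishing is clear by a degree/leading-term estimate) to $a+b+y$ staying in the region, using that each coordinate slice is an interval unbounded above and that $\boxplus^\gamma$ respects these slice structures by the iteration identity. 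Checking that this path never meets the zero set — equivalently, that the connected component of the complement of the zero set of $p \boxplus^\gamma q$ containing large positive vectors also contains $a+b+y$ — is the delicate point, and is exactly where the hyperbolic-polynomial machinery (Proposition~\ref{prop:ab_connected}) earns its keep.
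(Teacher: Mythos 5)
Your treatment of (1)--(4) matches the paper's (both are one-line verifications), so I will focus on (5) and (6).

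For (5), you propose a coordinate-wise iteration $\boxplus^\gamma = \boxplus^{\gamma_1} \circ \cdots \circ \boxplus^{\gamma_n}$, whereas the paper computes the Borcea--Br\"and\'en symbol of the bilinear operator $\boxplus^\gamma$ regarded as a linear map $\R^{(\gamma,\gamma)}[x,z] \to \R^\gamma[x]$, finds $\Symb_{BB}(\boxplus^\gamma) = (x+y+w)^\gamma$, and concludes immediately. Your iteration identity is morally correct but is not literally an ``iterated sum'' of bilinear convolutions: if you try to convolve $p$ and $q$ in $x_1$ alone you get a polynomial whose $x_2,\dots,x_n$-degrees double, so the intermediate objects do not live in $\R^\gamma$. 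To make your plan rigorous you would have to pass to the tensor picture on $\R[x_1,\dots,x_n,z_1,\dots,z_n]$ and apply one coordinate-pair operator $(x_i,z_i)\mapsto x_i$ at a time; at that point you are essentially re-deriving the symbol argument. The paper's route is shorter because the symbol $(x+y+w)^\gamma$ falls out from shift-invariance in one line. Your approach would work, but you should be explicit that the intermediate spaces are $2n$-variable polynomial spaces, not $\R^\gamma[x]$.

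For (6) there is a genuine gap. After shifting to $0 \in \Ab(p) \cap \Ab(q)$, you propose to freeze the coordinates other than $x_i$ at positive real values and apply the univariate triangle inequality $\lambda_1(\cdot \boxplus^{\gamma_i} \cdot) \le \lambda_1 + \lambda_1$. But the restriction of the multivariate convolution to a coordinate line is \emph{not} the univariate convolution of the restrictions: $(p \boxplus^\gamma q)(x_1,c_2,\dots,c_n)$ involves $\partial_x^\mu p$ with $\mu$ ranging over all multi-indices and $\partial_x^{\gamma-\mu}q(0)$ evaluated at the origin in all variables, not at $(\cdot, c_2,\dots,c_n)$, so it does not factor through $p(x_1,c)\boxplus^{\gamma_1}q(x_1,c)$. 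Because of this, the ``iterate the univariate triangle inequality along each axis, then glue by convexity'' step does not go through, and the path argument at the end has nothing to walk on. The paper instead uses a clean combinatorial fact: for real stable $p$, $0 \in \Ab(p)$ if and only if all coefficients of $p$ have the same sign (easy direction is clear; the other direction is by induction on degree using $\Ab(p) \subseteq \Ab(\partial_{x_i}p)$). Once you know $p$ and $q$ have all coefficients of one sign, the formula for $\boxplus^\gamma$ makes it immediate that $p\boxplus^\gamma q$ does too, hence $0 \in \Ab(p\boxplus^\gamma q)$; shifting back gives the Minkowski-sum inclusion. I would replace your argument for (6) with this characterization.
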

\begin{proof}
    $(1)$, $(2)$, $(3)$ and $(4)$ are straightforward. To prove $(5)$, one can consider the Borcea-Br{\"a}nd{\'e}n symbol (see \cite{bb1}) of the operator
    \[
        \boxplus^\gamma: \R^{(\gamma,\gamma)}[x_1,...,x_n,z_1,...,z_n] \to \R^\gamma[x_1,...,x_n]
    \]
    which is defined on products of polynomials (i.e., simple tensors) via
    \[
        \boxplus^\gamma(p(x)q(z)) := (p \boxplus^\gamma q)(x)
    \]
    and linearly extended. Note that if $\boxplus^\gamma$ preserves stability, then $(5)$ follows as a corollary. That said, the symbol of $\boxplus^\gamma$ takes on a very nice form, using property $(2)$:
    \[
        \Symb_{BB}(\boxplus^\gamma) = \boxplus^\gamma((x+y)^\gamma(z+w)^\gamma) = (x+y)^\gamma \boxplus^\gamma (x+w)^\gamma = (x+y+w)^\gamma
    \]
    This polynomial is obviously real stable, and $(5)$ follows.
    
    To prove $(6)$, we first assume $0 \in \Ab(p) \cap \Ab(q)$ by shifting, since $\Ab(p(x+a)) = \Ab(p) + \{-a\}$. Note also that $0 \in \Ab(p)$ if and only if $p$ has coefficients all of the same sign. (One direction is easy, the other follows by induction and the fact that $\Ab(p) \subseteq \Ab(\partial_{x_i}p)$ by a standard argument.) In this case, $p$ and $q$ have coefficients all of the same sign, and therefore so does $p \boxplus^\gamma q$. That is, in this case $0 \in \Ab(p \boxplus^\gamma q)$.
    
    To complete the proof, we utilize this case to show that $a \in \Ab(p)$ and $b \in \Ab(q)$ implies $a+b \in \Ab(p \boxplus^\gamma q)$. Note that by shifting we have that $0 \in \Ab(p(x+a)) \cap \Ab(q(x+b))$, which implies $0 \in \Ab((p \boxplus^\gamma q)(x+a+b))$ by the previous paragraph. This in turn implies $a+b \in \Ab(p \boxplus^\gamma q)$.
\end{proof}

Again, in the univariate case $\Ab(p)$ is literally the interval $[\maxroot(p), \infty)$. The triangle inequality stated above then is equivalent to the classical version: $\maxroot(p \boxplus^n q) \leq \maxroot(p) + \maxroot(q)$. This is what justifies our calling it ``the triangle inequality''.

The upshot of the previous proposition is that many of the nice classical properties of the univariate convolution are shared with the multivariate additive convolution. That said, it becomes natural to ask a similar question for the stronger results discussed in this paper; that is: what more can we say about how the multivariate additive convolution relates to points above the roots?

Our first conjecture in this direction is a combining of our main theorem (\ref{thm:main_result}) and the multivariate triangle inequality.

\begin{conjecture}
    Let $p,q,r \in \R^\gamma[x_1,...,x_n]$ be real stable. Then:
    \[
        \Ab(p \boxplus^\gamma q \boxplus^\gamma r) + \Ab(r) \supseteq \Ab(p \boxplus^\gamma r) + \Ab(q \boxplus^\gamma r)
    \]
\end{conjecture}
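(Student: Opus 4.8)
The plan is to reduce the multivariate statement to the univariate main theorem (Theorem~\ref{thm:main_result}) by restricting to lines, in the same spirit as one proves facts about $\Ab(p)$ and real stability by passing to univariate restrictions. First I would recast the desired Minkowski-sum containment in the following pointwise form: given $a \in \Ab(p \boxplus^\gamma r)$ and $b \in \Ab(q \boxplus^\gamma r)$, and given $c \in \Ab(r)$, I want to produce a point of $\Ab(p \boxplus^\gamma q \boxplus^\gamma r)$ that together with $c$ accounts for $a+b$; concretely, the cleanest target is to show $a + b - c \in \Ab(p \boxplus^\gamma q \boxplus^\gamma r)$, which is exactly the rearranged form $\Ab(p \boxplus^\gamma q \boxplus^\gamma r) \supseteq \Ab(p \boxplus^\gamma r) + \Ab(q \boxplus^\gamma r) - \Ab(r)$ of the claimed inequality (here using that $\Ab(r)$ is convex and a translate of a cone, so that this rearrangement is equivalent to the Minkowski form). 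By the shift-invariance property (Proposition on multivariate $\boxplus^\gamma$, part~(2)) I may translate so that $c = 0$, i.e.\ assume $0 \in \Ab(r)$; then the goal becomes $a + b \in \Ab(p \boxplus^\gamma q \boxplus^\gamma r)$ whenever $a \in \Ab(p \boxplus^\gamma r)$ and $b \in \Ab(q \boxplus^\gamma r)$ and $r$ has all coefficients of one sign.

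Next I would try to certify membership in $\Ab(\cdot)$ by a one-dimensional test: a point $v$ lies above the roots of a real stable polynomial $f$ iff for every $w \in \R_{++}^n$ the univariate polynomial $t \mapsto f(v + tw)$ is nonvanishing for $t \ge 0$, equivalently $\maxroot\bigl(f(v+tw)\bigr) \le 0$ (interpreting the restriction as a real-rooted univariate polynomial in $t$, which it is by stability, after clearing a possible leading-coefficient degeneration). So fix a direction $w \in \R_{++}^n$ and set $P(t) := (p \boxplus^\gamma r)(a + tw)$, $Q(t) := (q \boxplus^\gamma r)(b + tw)$, and I want to understand $(p \boxplus^\gamma q \boxplus^\gamma r)\bigl((a+b) + tw\bigr)$ as a univariate polynomial in $t$. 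The key computational step is a \emph{restriction-commutes-with-convolution} lemma: along the line $x = v + tw$, the multivariate $\boxplus^\gamma$ should collapse (up to the degree count and the scale factor $w^\gamma$ coming from Scale-invariance) to a univariate $\boxplus^N$ with $N = |\gamma| = \sum_i \gamma_i$, applied to the univariate restrictions. If that holds, then $(p\boxplus^\gamma q \boxplus^\gamma r)(x)$ restricted to the line through $a+b$ in direction $w$ equals (a scalar multiple of) $\tilde p \boxplus^N \tilde q \boxplus^N \tilde r$ for suitable univariate real-rooted $\tilde p, \tilde q, \tilde r$, and similarly $P, Q$ become $\tilde p \boxplus^N \tilde r$ and $\tilde q \boxplus^N \tilde r$ up to the same bookkeeping. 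Then Theorem~\ref{thm:main_result} gives
\[
    \lambda_1\bigl(\tilde p \boxplus^N \tilde q \boxplus^N \tilde r\bigr) + \lambda_1(\tilde r) \le \lambda_1\bigl(\tilde p \boxplus^N \tilde r\bigr) + \lambda_1\bigl(\tilde q \boxplus^N \tilde r\bigr) \le 0 + 0,
\]
using $\lambda_1(\tilde p \boxplus^N \tilde r) \le 0$ (since $a \in \Ab(p\boxplus^\gamma r)$), $\lambda_1(\tilde q \boxplus^N \tilde r) \le 0$ (since $b \in \Ab(q \boxplus^\gamma r)$), and $\lambda_1(\tilde r) \le 0$ (since $0 \in \Ab(r)$). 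Hence the line restriction of $p \boxplus^\gamma q \boxplus^\gamma r$ through $a+b$ in direction $w$ is nonvanishing on $t > 0$; ranging over all $w \in \R_{++}^n$ and using Proposition~\ref{prop:ab_connected} to upgrade from "nonvanishing on each positive ray" to "$a+b \in \Ab$", we conclude.

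The main obstacle I anticipate is precisely the restriction lemma and the bookkeeping around it: the naive expectation that $(p \boxplus^\gamma q)(v + tw)$ is proportional to a univariate $\boxplus^N$ of the restrictions $p(v+tw)$ and $q(v+tw)$ is \emph{false} in general, because $\boxplus^\gamma$ mixes the coordinates and the restriction to a line does not commute with taking partial derivatives in the individual $x_i$. A more robust route is to not restrict to a single line but to use the hyperbolicity/stability of the full multivariate polynomial together with the one-direction characterization only at the end: one should instead prove the analogue of Theorem~\ref{thm:main_result} directly at the level of the Cauchy/barrier transform in the multivariate setting, adapting the barrier-function argument of \cite{finiteconvolutions} with $\Ab(\cdot)$ and multivariate barrier functions $\phi_j^v(\cdot) = \partial_{x_j} \log f$ in place of the univariate $\mathcal{K}_\omega$. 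Concretely, one would track, for a point $v$ on the boundary of $\Ab$, the vector of partial logarithmic derivatives, show that $\boxplus^\gamma r$ with $0 \in \Ab(r)$ shifts $\Ab$ inward in a way controlled by these barriers, and that this shift is submodular in the three-polynomial sense. Making the multivariate barrier monotonicity and convexity estimates work — the multivariate analogues of the "$U_\alpha$ moves the largest root by a concave amount" statements — is where the real difficulty lies, and it is plausible this is genuinely hard (which is consistent with the paper flagging it only as a conjecture, and with the cautionary counterexample to the matrix analogue $\lambda_1(A+B+C)+\lambda_1(C) \le \lambda_1(A+C)+\lambda_1(B+C)$ given just before); so a complete proof may require new ideas beyond a direct transcription of either the hyperbolic-polynomial method or the univariate barrier method.
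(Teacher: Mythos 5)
This statement is stated in the paper as a \emph{conjecture}; the paper provides no proof of it, and in fact explicitly flags it as open (noting only that an unpublished multivariate MSS bound of Br\"and\'en--Marcus ``should follow from'' it, while it is unclear whether their methods prove the conjecture itself). So there is nothing in the paper to compare your attempt against. Your write-up is also, as you concede, not a proof but an analysis of why the obvious line of attack fails; on those terms it is largely sound and correctly pinpoints the central obstruction, namely that $\boxplus^\gamma$ does not commute with restriction to lines (the one-dimensional slice $t \mapsto (p \boxplus^\gamma q)(v+tw)$ is not, up to scaling, a univariate $\boxplus^N$ of the slices of $p$ and $q$). Identifying this is genuinely the crux, and your suggested fallback of building a multivariate barrier argument rather than restricting first is a reasonable diagnosis, consistent with the paper's own pessimism and with the counterexample to the matrix analogue $\lambda_1(A+B+C)+\lambda_1(C) \leq \lambda_1(A+C)+\lambda_1(B+C)$.

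One actual error in the setup is worth flagging. You assert that the Minkowski inclusion $\Ab(p \boxplus^\gamma q \boxplus^\gamma r) + \Ab(r) \supseteq \Ab(p\boxplus^\gamma r) + \Ab(q\boxplus^\gamma r)$ is \emph{equivalent} to the rearranged containment $\Ab(p\boxplus^\gamma q \boxplus^\gamma r) \supseteq \Ab(p\boxplus^\gamma r) + \Ab(q\boxplus^\gamma r) - \Ab(r)$, citing convexity and cone-translate structure. These are not equivalent: $X + C \supseteq A + B$ only asks that for each $a+b$ there exist \emph{some} decomposition $a+b = u+c$ with $u \in X$, $c \in C$, whereas $X \supseteq A+B-C$ asks that $a+b-c \in X$ for \emph{every} $c \in C$. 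The latter is strictly stronger and is in fact false already in one variable, where $\Ab(r) = [\maxroot(r), \infty)$ is unbounded above: taking $c \to +\infty$ drives $a+b-c$ out of any $\Ab(\cdot)$. The correct reduction would be to choose $c$ judiciously (in one variable, $c = \maxroot(r)$; in several variables one has to choose a boundary point of $\Ab(r)$ adapted to the direction in question, which is part of why this is delicate). After your subsequent normalization ``translate so $0 \in \Ab(r)$'' you are implicitly making such a choice, but the earlier equivalence claim should be removed or reformulated; as written it would propagate a false step into any attempted proof built on it.
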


In a (as of yet unpublished) paper of Br{\"a}nd{\'e}n and Marcus, a multivariate analogue of the Marcus-Spielman-Srivastava root bound is given. We believe that this result should follow from the previous conjecture, but it is currently unclear whether or not the methods of Br{\"a}nd{\'e}n-Marcus can be adapted to prove the conjecture itself.

\subsection{A Natural (But False) Conjecture}

It can be shown that the previous conjecture is not enough to prove optimal bounds for the paving conjecture. For this we need something a bit more refined, which we give in the following. This conjecture represents the most natural generalization of the univariate root bound, and the fact that it precisely implies optimal paving bounds only increases its importance. In addition it has been considered independently of the authors by Mohan Ravichandran (personal correspondence; also see \cite{ravichandran2016mixed}) in attempt to prove optimal paving bounds, and this even further suggests its centrality.

Unfortunately though, the conjecture is false in general. We will state it in two equivalent forms, and provide a counterexample.

To do this, we first must relate the notion of $\Ab(p)$ to the notion of \emph{potential} in the multiaffine case. Potential was used by Marcus-Spielman-Srivastava to delicately keep track of root bounds, and so this connection comes at no surprise. We will use the standard definition of potential in what follows:
\[
    \Phi_p^i(a) := \frac{\partial_{x_i}p}{p}(a)
\]

\begin{corollary}
    Let $p \in \R^{(1^n)}[x_1,...,x_n]$ be real stable and multiaffine with $p(0) > 0$ and $0 \in \Ab(p)$. Then:
    \[
        \Phi_p^i(0) \leq 1 \iff -e_i \in \Ab(p)
    \]
\end{corollary}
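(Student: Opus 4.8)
The plan is to exploit the fact that $p$ is multiaffine, so that $p$ is linear in $x_i$ for each $i$: write $p(x) = x_i \cdot \partial_{x_i} p + p|_{x_i = 0}$, where both $\partial_{x_i} p$ and $p|_{x_i=0}$ are real stable polynomials (in the remaining variables) that do not depend on $x_i$. The hypotheses $p(0) > 0$ and $0 \in \Ab(p)$ tell us, via the observation already used in the proof of the triangle inequality, that $p$ has all coefficients of the same sign, hence (since $p(0)>0$) all coefficients nonnegative; consequently $\partial_{x_i}p$ and $p|_{x_i=0}$ also have all nonnegative coefficients, so $0 \in \Ab(\partial_{x_i}p)$ and $0 \in \Ab(p|_{x_i=0})$, and both evaluate to something $\geq 0$ at $0$. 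Also $\Phi_p^i(0) = \frac{\partial_{x_i}p(0)}{p(0)}$, and since $p(0) > 0$ the inequality $\Phi_p^i(0) \leq 1$ is equivalent to $\partial_{x_i}p(0) \leq p(0) = \partial_{x_i}p(0) + p|_{x_i=0}(0)$, i.e.\ to $p|_{x_i=0}(0) \geq 0$ — which we already know — so more carefully $\Phi_p^i(0)\le 1 \iff p|_{x_i=0}(0) \ge \partial_{x_i}p(0)\cdot 0$... I need to be more precise: $\Phi_p^i(0) \le 1$ is equivalent to $\partial_{x_i}p(0) \le p(0)$.

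The key step is to characterize $-e_i \in \Ab(p)$ directly. By definition, $-e_i \in \Ab(p)$ means $p(-e_i + y) \neq 0$ for all $y \in \R_{++}^n$. Using multiaffineness in $x_i$, evaluate: $p(-e_i + y) = (y_i - 1)\,\partial_{x_i}p(y') + p|_{x_i=0}(y')$ where $y' = (y_j)_{j \neq i}$. As $y_i$ ranges over $(0,\infty)$ with the other coordinates fixed in $\R_{++}^{n-1}$, the quantity $y_i - 1$ ranges over $(-1, \infty)$. Since $\partial_{x_i}p$ and $p|_{x_i=0}$ both have nonnegative coefficients and $y' \in \R_{++}^{n-1}$, both $\partial_{x_i}p(y')$ and $p|_{x_i=0}(y')$ are $\geq 0$ (and one should check they are actually positive, or handle the degenerate case $\partial_{x_i}p \equiv 0$ separately — there $-e_i \in \Ab(p)$ iff $p \equiv p|_{x_i=0}$ has no roots above $0$, which holds, and $\Phi_p^i \equiv 0 \le 1$, consistent). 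Assuming $\partial_{x_i}p(y') > 0$, the expression $(y_i-1)\partial_{x_i}p(y') + p|_{x_i=0}(y')$ is an increasing affine function of $y_i$ on $(0,\infty)$; it is nonzero for all $y_i > 0$ and all $y' \in \R_{++}^{n-1}$ precisely when its value at the left endpoint $y_i \to 0$ is $\geq 0$, i.e.\ when $-\partial_{x_i}p(y') + p|_{x_i=0}(y') \geq 0$ for all $y' \in \R_{++}^{n-1}$. Taking $y' \to 0$ gives $p|_{x_i=0}(0) \geq \partial_{x_i}p(0)$; conversely, if $p|_{x_i=0}(0) \ge \partial_{x_i}p(0)$, one argues that $p|_{x_i=0}(y') - \partial_{x_i}p(y') \ge 0$ on all of $\R_{++}^{n-1}$ — this is the point where one invokes $0 \in \Ab(p)$ more substantively: $0 \in \Ab(p)$ says $p(y) \ne 0$ on $\R_{++}^n$, hence $p(y) > 0$ there (it's positive at $0$ by continuity/connectedness), which gives $(y_i - 0)\partial_{x_i}p(y') + p|_{x_i=0}(y') > 0$, and combined with monotonicity in $y_i$ and the endpoint condition, the result follows.

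So the chain is: $-e_i \in \Ab(p) \iff p|_{x_i=0}(y') \ge \partial_{x_i}p(y')$ for all $y' \in \R_{++}^{n-1} \iff p|_{x_i=0}(0) \ge \partial_{x_i}p(0) \iff p(0) \ge \partial_{x_i}p(0) \iff \Phi_p^i(0) \le 1$, where the middle equivalence uses nonnegativity of coefficients (so the minimum of $p|_{x_i=0} - \partial_{x_i}p$ over the closed orthant, when it is $\ge 0$ on the open orthant, is controlled by the value at the corner $0$ — actually the cleanest route is that $p|_{x_i=0} - \partial_{x_i}p$ evaluated at $0$ is a coefficient comparison, and monotonicity in each remaining coordinate, which holds because... hmm, that's not automatic). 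The main obstacle I anticipate is exactly this middle step: showing that the inequality $p|_{x_i=0}(y') \ge \partial_{x_i}p(y')$ on the open orthant is equivalent to the single inequality at $0$. I expect this is handled by noting that $q(x_i, y') := p(x_i, y')$ being real stable and nonvanishing (hence positive) on $\R_{++}^n$ forces a sign/monotonicity structure; alternatively, one shifts and reduces to the already-established fact "$0 \in \Ab(p) \iff$ all coefficients have the same sign" applied to the polynomial $p(x_i - 1, y' )$ restricted appropriately, i.e.\ recognizing $-e_i \in \Ab(p) \iff 0 \in \Ab(p(\cdot - e_i))$ and then reading off its coefficients — but the coefficients of $p(x - e_i)$ in terms of those of $p$ are an alternating-sign combination in the $x_i$-degree, and multiaffineness makes that combination just "coefficient of $x_i^1$ term gets subtracted from coefficient of $x_i^0$ term," which is again $p|_{x_i=0}(0) - \partial_{x_i}p(0) \ge 0$ together with the non-negativity of the mixed coefficients (which is preserved). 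This last reformulation via the shift $p \mapsto p(\cdot - e_i)$ is probably the slickest and I would write the proof that way.
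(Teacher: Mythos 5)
Your decomposition $p = x_i\,\partial_{x_i}p + p|_{x_i=0} =: x_i A(y') + B(y')$ is fine, the reduction of $-e_i \in \Ab(p)$ to the pointwise inequality $B(y') \ge A(y')$ on $\R_{++}^{n-1}$ is fine (with the degenerate case $A \equiv 0$ handled as you note), and the observation that $\Phi_p^i(0) \le 1$ is exactly $B(0) \ge A(0)$ is correct. But the step you flag as the obstacle really is a gap, and the ``slickest route'' you propose does not close it. Writing $p(x - e_i) = x_i A(y') + (B(y') - A(y'))$, the condition $0 \in \Ab(p(\cdot - e_i))$ means that \emph{every} coefficient of $B - A$ is nonnegative, not merely the constant one. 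Your remark that ``the non-negativity of the mixed coefficients \ldots is preserved'' is precisely the assertion that $b_S \ge a_S$ for all monomials $S$, and nothing you've written justifies going from the single inequality $b_\emptyset \ge a_\emptyset$ to all of these. For a concrete warning sign: in two variables with $p = a_1 x_1 x_2 + a_0 x_1 + b_1 x_2 + b_0$ (all coefficients $\ge 0$), the implication $b_0 \ge a_0 \Rightarrow b_1 \ge a_1$ is \emph{not} a formal consequence of sign conditions — it requires the strongly Rayleigh inequality $a_0 b_1 - a_1 b_0 \ge 0$. So real stability must be invoked substantively at this step, and you never do.

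The missing ingredient that would rescue your route is the monotonicity of the barrier function in the \emph{other} coordinate directions: for $j \ne i$, $\partial_{x_j}\Phi_p^i \le 0$ above the roots, which is exactly $\partial_{x_i}p\cdot\partial_{x_j}p - p\cdot\partial_{x_i}\partial_{x_j}p \ge 0$. With that, $\Phi_p^i(0) \le 1$ forces $\Phi_p^i(0,y') = A(y')/B(y') \le 1$ for all $y' \in \R_+^{n-1}$, which is your pointwise inequality. The paper takes a different path that avoids the $(n-1)$-dimensional reduction entirely: it restricts to the single line $\{t e_i\}$, uses multiaffineness to compute $p(ce_i) - \partial_{x_i}p(ce_i) = p((c-1)e_i)$, exploits monotonicity of $\Phi_p^i$ only in the $e_i$ direction, and then converts the resulting one-dimensional positivity statement into membership in $\Ab(p)$ via the convexity of $\Ab(p)$ (Proposition~\ref{prop:ab_connected}). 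Either way, some genuine consequence of real stability beyond coefficient positivity is needed, and that is what your writeup is missing.
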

\begin{proof}
    Since $p(x) > 0$ for $x \in \R_+^n$ and $p$ is multiaffine we have:
    \[
        \Phi_p^i(c \cdot e_i) < 1 \iff 0 < p(c \cdot e_i) - \partial_{x_i}p(c \cdot e_i) = p(0) + (c-1)\partial_{x_i}p(0) = p((c-1) \cdot e_i)
    \]
    It is straightforward that $\Phi_p^i(c \cdot e_i)$ is strictly decreasing in $c$ (or else identically zero) for $c \geq 0$, and therefore:
    \[
        \Phi_p^i(0) \leq 1 \iff \Phi_p^i(c \cdot e_i) < 1 \text{ for all } c > 0
    \]
    Combining these gives:
    \[
        \Phi_p^i(0) \leq 1 \iff p((c-1) \cdot e_i) > 0 \text{ for all } c > 0
    \]
    Note now that $p((c-1) \cdot e_i)$ is linear in $c$, and that $(c-1) \cdot e_i = 0 \in \Ab(p)$ for $c=1$. Therefore Proposition \ref{prop:ab_connected} implies $\Phi_p^i(0) \leq 1$ iff $-e_i \in \Ab(p)$.
\end{proof}

We now state the false conjecture, once in terms of potential and once in terms of points above the roots.

\begin{conjecture}[Strong conjecture, first form (see \cite{ravichandran2016mixed})]
    Let $p,q \in \R^{(1^n)}[x_1,...,x_n]$ be real stable multiaffine polynomials, and let $a$ and $b$ be above the roots of $p$ and $q$ respectively. Suppose for some $\varphi_1, \ldots, \varphi_n \in \R_{++}$, we have the following for all $i \in [n]$:
    \[
        \Phi_p^i(a) \leq \varphi_i, \qquad \Phi_q^i(b) \leq \varphi_i
    \]
    Then for all $i \in [n]$ we have:
    \[
        \Phi_{p \boxplus q}^i\left(a+b-\dfrac{1}{\varphi_i}\right) \leq \varphi_i
    \]
\end{conjecture}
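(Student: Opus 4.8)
The plan is to try to leverage the two structural facts about $\boxplus^\gamma$ that are already available in the multiaffine case: the triangle inequality $\Ab(p \boxplus^\gamma q) \supseteq \Ab(p) + \Ab(q)$, and the preceding corollary identifying the potential $\Phi^i$ with membership of a shifted point in $\Ab$. First I would promote that corollary to a shift- and scale-equivariant statement. Using shift-invariance ($\Ab(r(\cdot + c)) = \Ab(r) - c$) together with the fact that an appropriate positive rescaling of the $i$-th coordinate rescales $\Phi^i$ correspondingly while preserving multiaffinity, real stability and the sign hypotheses, the corollary becomes: for multiaffine real stable $r$ with $c \in \Ab(r)$ and $r(c) > 0$,
\[
    \Phi^i_r(c) \leq \psi \iff c - \tfrac{1}{\psi}\, e_i \in \Ab(r).
\]
(The requirement $r(c) > 0$ is the only genericity issue; it can be met by nudging $c$ into the interior of $\Ab(r)$ along $\mathbf{1}$, using the standard monotonicity of the barrier function $\Phi^i$ on $\Ab(r)$, and then taking a limit.)

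With this equivalence, the next step is immediate: from $\Phi^i_p(a) \leq \varphi_i$ and $\Phi^i_q(b) \leq \varphi_i$ I get $a - \tfrac{1}{\varphi_i}e_i \in \Ab(p)$ and $b - \tfrac{1}{\varphi_i}e_i \in \Ab(q)$; the triangle inequality then yields $a + b - \tfrac{2}{\varphi_i}e_i \in \Ab(p \boxplus q)$, and applying the equivalence in the reverse direction to $r = p \boxplus q$ gives
\[
    \Phi^i_{p \boxplus q}\!\left(a + b - \tfrac{1}{\varphi_i}\, e_i\right) \leq \varphi_i .
\]
So this approach proves the analogue of the conjectured bound, but with the whole shift placed on the $i$-th coordinate ($-\tfrac1{\varphi_i}e_i$) rather than spread across all of them ($-\tfrac1{\varphi_i}\mathbf{1}$).

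The hard part — and I expect it to be genuinely fatal — is bridging from the $e_i$-shifted conclusion to the $\mathbf{1}$-shifted conclusion that the conjecture demands. The point $a + b - \tfrac1{\varphi_i}(\mathbf{1} + e_i)$ that the claimed bound corresponds to (via the equivalence) lies strictly \emph{below} $a + b - \tfrac2{\varphi_i}e_i$ in the $n-1$ coordinates $j \neq i$, and $\Ab$ is only \emph{upward} closed, so triangle-inequality membership says nothing there. One could try to recover it from convexity of $\Ab(p)$ by averaging the memberships $a - \tfrac1{\varphi_j}e_j \in \Ab(p)$ over $j$, but a short bookkeeping check shows the required convex combination is forced to put all of its weight onto the $i$-th coordinate, contradicting the positive shifts needed in the other coordinates; one would genuinely need the far stronger hypotheses $\Phi^j_p(a) \leq \varphi_i/n$ for all $j$. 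This factor-of-$n$ gap is, I believe, the whole story: the $e_i$-version is everything the hyperbolicity / $\Ab$-machinery can see, whereas the $\mathbf{1}$-version is an eigenvalue-type inequality of the sort that fails for Hermitian matrices in general — compare the paper's earlier observation that $\lambda_1(A + B + C) + \lambda_1(C) \not\leq \lambda_1(A+C) + \lambda_1(B+C)$.

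Consequently, rather than a proof, I would expect this line of attack to terminate in a counterexample: choose $p, q$ (and points $a, b$) for which the off-coordinate potentials $\Phi^j$ with $j \neq i$ are large, so that the $e_i \to \mathbf{1}$ trade provably cannot be made. The other natural attack — running an MSS-style barrier argument directly on $\Phi^i_{p \boxplus q}$, tracking potentials under $\boxplus^\gamma$ as in the proof of Theorem~\ref{thm:main_result} — is worth trying, but since the statement is false it must fail as well, presumably because the elementary operators into which one would decompose $\boxplus^\gamma$ do not act on the multivariate barrier function with the submodularity the argument needs.
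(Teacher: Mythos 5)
Your verdict is correct and agrees with the paper: this ``conjecture'' is in fact false, and the paper disproves it rather than proving it. Your derivation of the weaker $e_i$-shifted bound
\[
\Phi^i_{p \boxplus q}\left(a + b - \tfrac{1}{\varphi_i}\, e_i\right) \leq \varphi_i
\]
from the shift/scale-equivariant form of the corollary, $\Phi^i_r(c) \leq \psi \iff c - \tfrac{1}{\psi}e_i \in \Ab(r)$, together with the Minkowski triangle inequality $\Ab(p)+\Ab(q)\subseteq\Ab(p\boxplus q)$, is sound and is a genuine (and true) result. Your diagnosis of the obstruction is also exactly right: $\Ab$ is only upward closed, so the triangle inequality gives membership at $a+b-\tfrac{2}{\varphi_i}e_i$ but is silent about the strictly lower point $a+b-\tfrac{1}{\varphi_i}(\mathbf{1}+e_i)$ that the conjecture would require, and the factor-of-$n$ loss in a convexity-averaging repair is fatal. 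Your comparison to the paper's diagonal-matrix example $\lambda_1(A+B+C)+\lambda_1(C)\not\leq\lambda_1(A+C)+\lambda_1(B+C)$ is the right moral for why one should expect a counterexample rather than a proof.

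Where the paper goes further is in actually producing that counterexample. It first reformulates the conjecture in the equivalent $\Ab$-form, $-e_i\in\Ab(p)\cap\Ab(q)$ for all $i$ implies $-\mathbf{1}-e_i\in\Ab(p\boxplus q)$ for all $i$ (the equivalence is precisely the shift-and-scale reduction you describe), and then exhibits the $n=3$ multiaffine polynomial
\[
p = q = \tfrac{8}{21} x_1 x_2 x_3 + \tfrac{80}{21} x_1 x_2 + \tfrac{27}{7} x_1 x_3 + x_2 x_3 + 4x_1 + 4x_2 + 4x_3 + 4,
\]
constructed so that each of the three strongly-Rayleigh differences $\partial_{x_i}p\,\partial_{x_j}p - p\,\partial_{x_i}\partial_{x_j}p$ is a perfect square, i.e.\ $p$ sits on the boundary of real stability. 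One checks $-e_i\in\Ab(p)$ for all $i$, yet $(p\boxplus q)(-\mathbf{1}-e_1)=-\tfrac{1450}{441}<0$, so $-\mathbf{1}-e_1\notin\Ab(p\boxplus q)$. This is exactly the kind of extremal example your final paragraph predicts --- off-coordinate behavior as tight as real stability permits --- so the missing step in your proposal is to carry the heuristic through to an explicit construction rather than stopping at the expectation of one.
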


\begin{conjecture}[Strong conjecture, second form]
    Let $p,q \in \R^{(1^n)}[x_1,...,x_n]$ be real stable multiaffine polynomials. Suppose for all $i \in [n]$ we have:
    \[
        -e_i \in \Ab(p), \qquad -e_i \in \Ab(q)
    \]
    The for all $i \in [n]$ we have:
    \[
        -1-e_i \in \Ab\left(p \boxplus^{(1^n)} q\right)
    \]
\end{conjecture}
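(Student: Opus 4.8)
The plan is to work with the second form; by the preceding corollary together with the shift- and scale-invariance of $\boxplus^{(1^n)}$, the two forms are equivalent, so it suffices to show: if $p,q$ are multiaffine real stable with $-e_i\in\Ab(p)\cap\Ab(q)$ for every $i$, then $-\mathbf{1}-e_i\in\Ab(p\boxplus^{(1^n)}q)$ for every $i$. (Concretely, translating the base points to the origin and rescaling each coordinate replaces the thresholds $\varphi_i$ by $1$, turning $\Phi^i_p(a)\le\varphi_i$ into $-e_i\in\Ab(p)$ and the desired inequality into $-\mathbf{1}-e_i\in\Ab(p\boxplus q)$.)

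The natural engine is the Marcus--Spielman--Srivastava barrier/potential method --- the same machinery underlying Theorem \ref{thm:main_result} --- run on the potentials $\Phi^i$. Using derivative-invariance one expresses $\Phi^i_{p\boxplus q}$ through convolutions of derivatives of $p$ and $q$, and multiaffineness makes each $\Phi^i$ a ratio of two polynomials affine in $x_i$, hence monotone along $e_i$ (exactly as in the corollary's proof). The univariate MSS bound (Theorem \ref{thm:main_result}) is the $n=1$ case; the hope would be to bootstrap to general $n$ by peeling off one coordinate at a time, freezing the others at their threshold values, using the triangle inequality $\Ab(p)+\Ab(q)\subseteq\Ab(p\boxplus q)$ for coarse control and refining in the active coordinate with a one-variable barrier estimate.

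The step I expect to block the argument --- and, given this subsection's heading, the step that genuinely fails --- is the demand that a \emph{single} witness point $-\mathbf{1}-e_i$ certify the bound in coordinate $i$ for every $i$ at once. One convolution perturbs all $n$ potentials simultaneously, so an estimate tuned to coordinate $i$ generically damages the estimate for some $j\neq i$, and the univariate diminishing-returns inequality refuses to assemble into a statement about one common point. This is the ``pointwise above the roots'' incarnation of precisely the obstruction that already defeats the hyperbolic/Horn route for three polynomials (the diagonal example $\lambda_1(A+B+C)+\lambda_1(C)\not\le\lambda_1(A+C)+\lambda_1(B+C)$ with $A=B=\diag(2,0)$, $C=\diag(0,2)$). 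Accordingly, rather than forcing the barrier argument through, I would search for an explicit counterexample in the smallest nontrivial case $n=2$: parametrize the multiaffine real stable polynomials $p=a_0+a_1x_1+a_2x_2+a_{12}x_1x_2$ and $q$ by their coefficients (subject to the standard bivariate multiaffine stability condition), impose the hypotheses $a_1\le a_0$, $a_2\le a_0$ (and likewise for $q$) that the corollary extracts from $-e_i\in\Ab$, compute $p\boxplus^{(1^2)}q$ in closed form, and seek coefficients forcing $p\boxplus q$ to change sign along the ray from $-\mathbf{1}-e_1$ into $\R^2_{++}$, i.e.\ $-\mathbf{1}-e_1\notin\Ab(p\boxplus q)$. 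By Proposition \ref{prop:ab_connected} and multiaffineness this last check reduces to verifying positivity of the relevant one-variable restrictions, which is a finite computation.
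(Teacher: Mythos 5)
The ``statement'' here is the conjecture itself, and the paper's proof attached to it is only a \emph{proof of equivalence} with the first form (via the preceding corollary characterizing $-e_i\in\Ab(p)$ through $\Phi^i_p(0)\le 1$, together with shift- and scale-invariance). Your opening paragraph sketches exactly this argument and it is correct; that part matches the paper. But the remainder of your proposal is not a proof of the statement at all --- it is a (sensible) meta-discussion of why one expects the conjecture to fail, followed by a plan to search for a counterexample. That material belongs to the paper's subsequent \emph{Counterexample}, not to this proof; you should keep the two goals separate, since the task in the paper is only to show the two formulations are interchangeable.

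On the counterexample hunt itself, there is a concrete gap: you propose to look at $n=2$, but this appears to be too small. The paper's counterexample lives in $n=3$ and is engineered so that all three pairwise strongly-Rayleigh expressions are perfect squares, i.e.\ $p$ is simultaneously extremal for every pair of coordinates. In the bivariate multiaffine case there is only one such pairwise condition, and the extremal example $p=q=(1+x_1)(1+x_2)$ gives $(p\boxplus q)(-2+s,-1+t)=s(1+t)$, which is strictly positive for $s,t>0$ --- so $-\mathbf{1}-e_1$ lies exactly on the boundary of $\Ab(p\boxplus q)$ rather than outside it. More generally, writing $p\boxplus q = c_0+c_1x_1+c_2x_2+c_{12}x_1x_2$, the membership $-\mathbf{1}-e_1\in\Ab(p\boxplus q)$ reduces to $c_1\ge c_{12}$, $c_2\ge 2c_{12}$, and $c_0-2c_1-c_2+2c_{12}\ge 0$; the first two follow immediately from $0\le a_{12}\le a_1a_2$, $0\le b_{12}\le b_1b_2$ and $a_i,b_i\le 1$, and the third holds with equality in the extremal cases I tried, strongly suggesting $n=2$ satisfies the conjecture. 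If you do want to reconstruct the counterexample, start at $n=3$ and, as the paper does, force every pairwise strongly-Rayleigh quantity to be a perfect square so that $p$ sits on the boundary of the real stable cone in all three ``directions'' at once.

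A final small point: your ``diminishing returns'' intuition for why a single witness $-\mathbf{1}-e_i$ should fail is reasonable, but the specific obstruction you cite --- the diagonal matrix example $\lambda_1(A+B+C)+\lambda_1(C)\not\le\lambda_1(A+C)+\lambda_1(B+C)$ --- is the reason \emph{hyperbolicity} methods cannot prove the three-polynomial submodularity. It does not by itself indicate that the multivariate barrier conjecture is false, since the univariate submodularity (Theorem~\ref{thm:main_result}) is true despite that same matrix obstruction. The actual failure is subtler, which is why an explicit extremal example is needed.
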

\begin{proof}[Proof of equivalence.]
    By the previous corollary $-e_i \in \Ab(p)$ is equivalent to $\Phi_p^i(0) \leq 1$. The conclusion of the above conjecture is that $\Phi_{p \boxplus q}^i(-1) \leq 1$. Again by the previous corollary, this is equivalent to $-e_i \in \Ab((p \boxplus q)(x-1))$. This in turn is equivalent to $-1-e_i \in \Ab(p \boxplus q)$.
    
    As a final note, we can restrict to this seemingly less general case (i.e., $\Phi_p^i(0) \leq 1$ instead of $\Phi_p^i(a) \leq \varphi_i$) via shifting and scaling, which completes the proof.
\end{proof}

To disprove this, we give a counterexample to the second formulation. The key idea is to use a polynomial which is extremal with respect to the strongly Rayleigh conditions. These conditions are nice convexity-type properties which are equivalent to real stability for multiaffine polynomials. We recall them now.

\begin{proposition}[\cite{strongrayleigh}]
    Fix multiaffine $p \in \R^{(1^n)}[x_1,...,x_n]$. We have that $p$ is real stable iff for all $x \in \R^n$ and all $i,j \in [n]$ we have:
    \[
        \partial_{x_i}p(x) \cdot \partial_{x_j}p(x) - p(x) \cdot \partial_{x_i}\partial_{x_j}p(x) \geq 0
    \]
\end{proposition}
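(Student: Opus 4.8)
The plan is to prove the two implications separately. The forward direction ($p$ real stable $\Rightarrow$ the inequalities) reduces to a two-variable computation, while the converse goes by induction on $n$.

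For the forward direction, first dispose of $i = j$: since $p$ is multiaffine, $\partial_{x_i}^2 p = 0$, so the inequality is just $(\partial_{x_i}p)^2 \ge 0$. Now fix $i \ne j$ and assign arbitrary real values to all variables other than $x_i$ and $x_j$; write $g(x_i, x_j)$ for the resulting bivariate multiaffine polynomial. Since setting a variable of a real stable polynomial to a real value yields a real stable polynomial (or the zero polynomial) --- by Hurwitz's theorem, sliding the variable down to the real axis from the upper half-plane --- $g$ is real stable. Writing $g = \alpha x_ix_j + \beta x_i + \gamma x_j + \delta$, a direct computation gives $\partial_{x_i}g\,\partial_{x_j}g - g\,\partial_{x_i}\partial_{x_j}g = \beta\gamma - \alpha\delta$, a constant, and this constant equals the value of $\partial_{x_i}p\,\partial_{x_j}p - p\,\partial_{x_i}\partial_{x_j}p$ at the chosen point (partial derivatives in $x_i, x_j$ commute with restriction of the other variables). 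So I must show that real stability of $g$ forces $\beta\gamma - \alpha\delta \ge 0$. If instead $\alpha\delta - \beta\gamma > 0$, then solving $g = 0$ for $x_j$ exhibits the zero set of $g$ as the graph of a real Möbius transformation of $x_i$ (an affine map when $\alpha = 0$) with positive ``determinant'' $\alpha\delta - \beta\gamma$; such a map carries the upper half-plane onto itself, producing a zero of $g$ with both coordinates in the upper half-plane --- contradicting real stability.

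For the converse, induct on $n$; the case $n = 1$ holds because every real polynomial of degree at most one is real stable. For the inductive step write $p = x_n A + B$ with $A = \partial_{x_n}p$ and $B = p|_{x_n = 0}$, both multiaffine in $x_1, \dots, x_{n-1}$, and abbreviate $R_{ij}(p) := \partial_{x_i}p\,\partial_{x_j}p - p\,\partial_{x_i}\partial_{x_j}p$. For $i, j < n$ one expands $R_{ij}(p) = x_n^2\,R_{ij}(A) + x_n\,S_{ij} + R_{ij}(B)$ for a suitable $S_{ij}$; a quadratic in $x_n$ that is nonnegative for all real $x_n$ must have nonnegative leading coefficient and nonnegative constant term, so $R_{ij}(A) \ge 0$ and $R_{ij}(B) \ge 0$ everywhere. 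By the inductive hypothesis, $A$ and $B$ are real stable. For $i < n$ one computes directly that $R_{in}(p) = A\,\partial_{x_i}B - B\,\partial_{x_i}A$, so the hypothesis furnishes $A\,\partial_{x_i}B - B\,\partial_{x_i}A \ge 0$ on $\R^{n-1}$ for each $i < n$. It then remains to deduce that $p = x_n A + B$ is real stable from the fact that the multiaffine real stable polynomials $A$ and $B$ satisfy these Wronskian sign conditions.

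This last deduction is where the real work lies, and it must use the multiaffineness of $A$ and $B$: the superficially plausible statement ``$A, B$ real stable and $A\,\partial_{x_i}B - B\,\partial_{x_i}A \ge 0$ for all $i$ implies $x_nA + B$ real stable'' is \emph{false} for general real stable $A, B$ --- already $A = 1$, $B = x^3$ satisfies $AB' - BA' = 3x^2 \ge 0$ while $x_n + x^3$ vanishes at $(i, i)$. The approach I would take is to check real stability of $p$ by restricting to lines $a + tv$ with $v \in \R^n_{>0}$: the restrictions $\tilde A(t) = A(a' + tv')$ and $\tilde B(t) = B(a' + tv')$ are real rooted, and their Wronskian $\tilde A\tilde B' - \tilde A'\tilde B = \sum_{i<n} v_i(A\,\partial_{x_i}B - B\,\partial_{x_i}A)(a' + tv') \ge 0$, so $\tilde B/\tilde A$ is nondecreasing where defined; one then argues that $\tilde A, \tilde B$ interlace and counts roots of $p(a+tv) = (a_n + tv_n)\tilde A + \tilde B$ by comparing the increasing affine function $t \mapsto a_n + tv_n$ with the decreasing rational function $-\tilde B/\tilde A$ on each interval between consecutive roots of $\tilde A$ (and on the two unbounded intervals), obtaining a full set of real roots. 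The care needed --- and the only genuinely delicate part of the argument --- is the handling of common factors of $A$ and $B$, higher-multiplicity roots of $\tilde A$, and drops in degree; it is precisely here that the Wronskian conditions, together with multiaffineness, rule out the configurations that would otherwise break the count. (Alternatively, one may quote the structure theory of ``proper position'' for multiaffine real stable polynomials, which essentially packages the same fact.)
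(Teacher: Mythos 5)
The paper does not prove this proposition at all---it is quoted directly from Br\"and\'en via the \cite{strongrayleigh} reference---so there is no ``paper proof'' to compare against. I will therefore assess your argument on its own terms.

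Your forward direction is correct and complete: restricting to two variables preserves real stability, the bilinear form $\partial_{x_i}p\,\partial_{x_j}p - p\,\partial_{x_i}\partial_{x_j}p$ descends to the restriction, and the M\"obius/determinant argument finishes it. The converse correctly mirrors Br\"and\'en's inductive strategy, and your algebra is right: the quadratic-in-$x_n$ argument giving $R_{ij}(A), R_{ij}(B) \geq 0$, and the identity $R_{in}(p) = A\,\partial_{x_i}B - B\,\partial_{x_i}A$, are both sound, so the inductive hypothesis does deliver $A, B$ real stable plus the Wronskian sign conditions.

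The genuine gap is the last step. The assertion that ``one then argues that $\tilde A, \tilde B$ interlace'' does \emph{not} follow from the aggregated inequality $\tilde A\tilde B' - \tilde A'\tilde B \geq 0$ together with real-rootedness. Your own example already shows this after restriction: with $A = 1$, $B = x_1 x_2 x_3$, the line $a' + t(1,1,1)$ through the origin gives $\tilde A = 1$, $\tilde B = t^3$, so $\tilde A\tilde B' - \tilde A'\tilde B = 3t^2 \geq 0$ and $\tilde B/\tilde A$ is monotone, yet $(a_n+tv_n)\tilde A + \tilde B = t^3 + v_n t + a_n$ has only one real root for $v_n > 0$, a deficit of two. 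So the root-count picture you describe (increasing line vs.\ decreasing $-\tilde B/\tilde A$, one crossing per interval between poles) collapses when $\deg\tilde B$ exceeds $\deg\tilde A$ by two or more, because there are not enough poles to create enough intervals. What actually excludes such configurations is not multiaffineness per se nor the aggregated Wronskian, but the \emph{individual} conditions $A\,\partial_{x_i}B - B\,\partial_{x_i}A \geq 0$: in the example this would require $\partial_{x_i}B$ to be a globally nonnegative multiaffine polynomial of positive degree, which is impossible unless it is constant, forcing $\deg B \leq \deg A + 1$. You never extract such a degree/structure constraint from the individual inequalities, and ``quote the structure theory of proper position for multiaffine real stable polynomials'' is circular here, since that structure theory is essentially the theorem being proved. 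So while your sketch correctly identifies where the real work lies, the argument as written does not close, and closing it is the substantive content of Br\"and\'en's result.
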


It is also of interest to note that the polynomial in the above inequality does not depend on $x_i$ or $x_j$. One can see this by taking the partial derivative of the above expression with respect to $x_i$ or $x_j$, recalling that $p$ is multiaffine (this expression will be 0). This makes it relatively easy to determine whether or not 3-variable multiaffine polynomials are real stable, as in the following example.

\begin{counterexample}
    The polynomial
    \[
        p = q = \frac{8}{21} x_1 x_2 x_3 + \frac{80}{21} x_1 x_2 + \frac{27}{7} x_1 x_3 + x_2 x_3 + 4x_1 + 4x_2 + 4x_3 + 4
    \]
    provides a counterexample to the above conjectures.
\end{counterexample}
\begin{proof}
    First we prove that $p=q$ is real stable. By the above comment, we obtain simple expressions for the strongly Rayleigh conditions:
    \[
        \partial_{x_1}p(x) \cdot \partial_{x_2}p(x) - p(x) \cdot \partial_{x_1}\partial_{x_2}p(x) = \frac{1}{21} (7 x_3 + 4)^2
    \]
    \[
        \partial_{x_1}p(x) \cdot \partial_{x_3}p(x) - p(x) \cdot \partial_{x_1}\partial_{x_3}p(x) = \frac{4}{7} (2 x_2 + 1)^2
    \]
    \[
        \partial_{x_2}p(x) \cdot \partial_{x_3}p(x) - p(x) \cdot \partial_{x_2}\partial_{x_3}p(x) = \frac{4}{147} (22 x_1 + 21)^2
    \]
    Notice that all of these expressions are nonnegative for all $x$, which means that $p=q$ is real stable. Also, notice that these polynomials are on the boundary of the set of nonnegative polynomials, and so in some sense $p=q$ is on the boundary of the set of real stable polynomials. Note that this polynomial has 0 above its roots (with $p(0) > 0$), and it is easy to see that $-e_i \in \Ab(p)$ for all $i \in [n]$.
    
    We now compute $p \boxplus q = p \boxplus p$ as follows:
    \[
        p \boxplus q = \frac{64}{441} x_1 x_2 x_3 + \frac{1280}{441} x_1 x_2 + \frac{144}{49} x_1 x_3 + \frac{16}{21} x_2 x_3 + \frac{4768}{147} x_1 + \frac{32}{3} x_2 + \frac{226}{21} x_3 + \frac{1520}{21}
    \]
    Since $\boxplus$ preserves real stability, this polynomial is real stable. Further, we have $0 \in \Ab(p \boxplus q)$ and $(p \boxplus q)(0) > 0$, and so $(p \boxplus q)(x) \geq 0$ for all $x \in \Ab(p \boxplus q)$. With this, we show that $(p \boxplus q)(-1-e_1) < 0$ which contradicts the above conjecture:
    \[
        (p \boxplus q)(-1-e_1) = -\frac{1450}{441}
    \]
\end{proof}

\section{Proof of the Main Result}
\label{sect:proof}
%\begin{proof}[Possible outline for max root part of \ref{submodularity}]

% \begin{lemma}[Pinching]
% Let $p$ have at least 2 distinct roots and $r$ some fixed real rooted polynomial. Order roots $\lambda_1 \geq \ldots \geq \lambda_n$. Then there exists a split of $p$ into real rooted $p = \tilde{p} + \hat{p}$ obtained by (here $\lambda_1 \neq \lambda_k$):
% $$\tilde{p} = (x - \mu)^2 \prod_{i \neq 1, k} (x - \lambda_i) ~~~ (x- \rho)\prod_{i \neq 1, k} (x - \lambda_i)$$
% That satisfy the following:
% \begin{itemize}
% \item $\maxroot{\tilde{p} \boxplus_n r},\maxroot{\hat{p} \boxplus_n r} < \maxroot{p \boxplus_n r}$,
% \item $\lambda_1 > \mu > \lambda_k$
% \item $\rho > \lambda_1$
% \end{itemize}

% \end{lemma}
% \begin{proof}[Sketch of pinching]

% Let $t = \maxroot{p \boxplus_n r}$ and choose $\mu$ such that
% $$\frac{2}{t - \mu} = \frac{1}{t - \lambda_1} + \frac{1}{1-\lambda_k}$$
% Then by the same arguments in MSS we have the second and third item.

% To see the first claim, note that by the second conjecture and our choice of $\mu$ we have $ (t- \lambda(\tilde{p}))^{-1} \prec (t- \lambda(p))^{-1}$ which implies $\lambda(r \boxplus_n p(t-x)) \prec \lambda(r \boxplus_n \tilde{p}(t-x))$. Since shift commutes with additive convolution we get $(t-\lambda(r \boxplus_n \tilde{p})^{-1} \prec (t-\lambda(r \boxplus_n p))^{-1}$. Namely $\maxroot{\tilde{p} \boxplus_n r} < \maxroot{p \boxplus_n r}$.
% \end{proof}

We now set out to prove our main result, Theorem \ref{thm:main_result}. First, we need some standard notation relating polynomials with interlacing roots. If $p,q \in \R[x]$ are real-rooted polynomials with positive leading coefficients, we write $q \ll p$ if $\deg(q) \in \{\deg(p), \deg(p)-1\}$ and the following root inequalities hold for $p$ and $q$ counting multiplicities:
\[
    \cdots \leq \lambda_3(p) \leq \lambda_2(q) \leq \lambda_2(p) \leq \lambda_1(q) \leq \lambda_1(p)
\]
We also say that $q$ \emph{interlaces} $p$. It is a standard fact that $q \ll p$ or $p \ll q$ if and only if $ap + bq$ is real-rooted for all $a,b \in \R$ (assuming positive leading coefficients). Also standard is the fact that $q \ll p$ and $q \ll r$ if and only if $ap + br$ is real-rooted for all $a,b \in \R_+$. It then follows from the theory of interlacing polynomials:

\begin{lemma} \label{lem:interlacing}
    Fix real-rooted $p,q,r \in \R^n[x]$ with positive leading coefficients. If $q \ll p$, then:
    \[
        q \boxplus^n r \ll p \boxplus^n r
    \]
\end{lemma}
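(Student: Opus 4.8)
The plan is to encode the hypothesis $q \ll p$ as the real stability of an auxiliary bivariate polynomial, transport that through $(-)\boxplus^n r$, and decode the result. First I would invoke the standard fact (a restatement of the interlacing characterizations recalled just before the lemma) that for real-rooted $p,q$ with positive leading coefficients and $\deg q \in \{\deg p,\deg p - 1\}$,
\[
    q \ll p \iff F(t,y) := p(t) + y\,q(t) \ \text{is real stable in}\ (t,y).
\]
The role of the extra variable $y$ is crucial here: it records not merely \emph{that} the roots of $p$ and $q$ interlace, but \emph{which way}, and it is exactly this orientation that must survive the convolution.

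Next I would apply $T_r := (-)\boxplus^n r$ in the variable $t$ only, treating $y$ as a spectator. By bilinearity of $\boxplus^n$ this gives $T_r F(t,y) = (p\boxplus^n r)(t) + y\,(q\boxplus^n r)(t)$. The key input is that $T_r$ preserves real stability even with the spectator variable present: this is the Borcea--Br{\"a}nd{\'e}n stability-preservation of $\boxplus^n$ with a fixed second argument, which follows from the same symbol computation used for $\boxplus^\gamma$ above --- the symbol of $T_r$ is $(t+y)^n \boxplus^n r = r(t+y)$, and $r(t+y)$ is real stable because $r$ is real-rooted. Hence $(p\boxplus^n r)(t) + y\,(q\boxplus^n r)(t)$ is real stable in $(t,y)$. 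To decode, I would record the routine facts that $p\boxplus^n r$ and $q\boxplus^n r$ have positive leading coefficients and that $\deg(p\boxplus^n r) = \deg p + \deg r - n$ (read straight off the defining sum for $\boxplus^n$), so $\deg(q\boxplus^n r) \in \{\deg(p\boxplus^n r),\,\deg(p\boxplus^n r)-1\}$ since $\deg q \in \{\deg p,\,\deg p - 1\}$; running the displayed equivalence backwards then gives exactly $q\boxplus^n r \ll p\boxplus^n r$. (When $\deg p + \deg r < n$ both convolutions vanish and the statement is vacuous under the positive-leading-coefficient convention; in the intended applications all polynomials have degree $n$, so this does not arise.)

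The step I expect to be the genuine obstacle is getting the \emph{orientation} right. That $p\boxplus^n r$ and $q\boxplus^n r$ interlace in \emph{some} order is essentially free: $a(p\boxplus^n r) + b(q\boxplus^n r) = (ap+bq)\boxplus^n r$ is real-rooted for all $a,b\in\R$ by bilinearity and stability preservation, so the unoriented dichotomy follows from the interlacing facts already recalled. All the content lies in pinning down that it is $q\boxplus^n r \ll p\boxplus^n r$ and not the reverse, and the auxiliary variable $y$ above is precisely the device that does this. If one preferred to avoid the stability-preservation machinery, an alternative is to take the unoriented dichotomy for granted and fix the orientation by continuity: the set of monic degree-$n$ polynomials interlacing $p$ from below is convex (the Wronskian condition $p'f - pf' \ge 0$ is linear in $f$) and contains a polynomial whose roots are those of $p$ shifted strictly downward, for which shift-invariance (Proposition~\ref{prop:add_props}) makes $q\boxplus^n r \ll p\boxplus^n r$ transparent; since the orientation can flip along a path in this set only where the two convolutions coincide, one then rules that out (using injectivity of $(-)\boxplus^n r$ when $\deg r = n$, and an approximation argument otherwise, the conclusion being a closed condition). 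I would present the stability-preservation route as the main proof, since it is the cleanest.
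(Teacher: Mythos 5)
The paper itself gives no proof of this lemma; it simply recalls two ``standard facts'' about $\ll$ and linear combinations and states that the lemma ``follows from the theory of interlacing polynomials.'' Your proposal therefore fills a gap rather than duplicating an argument, and it does so correctly. The route you take --- encode $q \ll p$ as real stability of $p(t) + y\,q(t)$, push this through $T_r := (\cdot)\boxplus^n r$ acting on $t$ with $y$ a spectator, and decode --- is a clean Hermite--Biehler / Borcea--Br{\"a}nd{\'e}n argument, and you correctly identify the orientation of the interlacing as the only nontrivial content. The paper's two recalled facts get you the unoriented statement ``$(p\boxplus^n r)$ and $(q\boxplus^n r)$ interlace in \emph{some} order'' essentially for free (exactly as you observe), and they would then need an extra step --- a degree count when $\deg q = \deg p - 1$, or a perturbation/continuity argument in the equal-degree case --- to pin the orientation. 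Your stability-encoding approach absorbs the orientation bookkeeping into the choice of half-plane, which is precisely its advantage.

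Two small points to tighten. First, in the symbol computation you reuse $y$ for the Borcea--Br{\"a}nd{\'e}n symbol variable, colliding with the spectator variable $y$; the symbol of $T_r$ on $\R^n[t]$ should be written $T_r\bigl[(t+w)^n\bigr] = r(t+w)$ with a fresh $w$. Second, the step that $T_r$ continues to preserve real stability in the presence of the spectator $y$ deserves a precise citation: it is the tensoring principle in the Borcea--Br{\"a}nd{\'e}n symbol characterization (the symbol of $T_r \otimes \mathrm{id}$ is the product $r(t+w)\,(y+v)^1$, stable as a product of stable factors), which is standard but worth saying explicitly since it is the crux. Your alternative continuity argument is morally sound --- the set of $f \ll p$ is convex (Wronskian condition $p'f - pf' \ge 0$ is linear in $f$), shift-invariance provides a base point where the orientation is obvious, and the orientation is a closed condition that can only switch through a degeneracy --- though it would require some care to rule out the degeneracy cleanly, as you note. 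Presenting the stability-preservation route as primary is the right call.
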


Now we introduce the notation used in \cite{finiteconvolutions}. Given a monic polynomial $p$ of degree $n$ with at least 2 distinct roots, we write:
\[
    p(x) = \prod_{i=1}^n (x-\lambda_i)
\]
Order the roots $\lambda_1 \geq \cdots \geq \lambda_n$, and let $k$ be minimal such that $\lambda_1 \neq \lambda_k$. Define $\mu_0 := \frac{\lambda_1+\lambda_k}{2}$ and $\mu_1 := \lambda_1$. Further, for $\mu \in [\mu_0,\mu_1]$ we define:
\[
    \tilde{p}_\mu(x) := (x-\mu)^2 \prod_{i \neq 1,k} (x-\lambda_i)
\]
We then define:
\[
    \hat{p}_\mu(x) := p(x) - \tilde{p}_\mu(x) = ((2\mu-(\lambda_1+\lambda_k))x - (\mu^2 - \lambda_1 \lambda_k)) \prod_{i \neq 1,k} (x-\lambda_i)
\]
For $\mu > \mu_0$, we have that $\hat{p}_\mu$ is of degree $n-1$ with positive leading coefficient and the extra root is at least $\lambda_1$. (Note that when $\mu = \mu_0$, we have that $\hat{p}_\mu$ is of degree $n-2$ with negative leading coefficient.) To see this, notice:
\[
    \rho := \frac{\mu^2 - \lambda_1 \lambda_k}{2\mu-(\lambda_1+\lambda_k)} \geq \lambda_1 \Longleftrightarrow \mu^2 - 2\mu\lambda_1 + \lambda_1^2 = (\mu - \lambda_1)^2 \geq 0
\]
This then implies that for $f_\mu(x) := (x-\mu) \prod_{i \neq 1,k} (x-\lambda_i)$, we have $f_\mu \ll \tilde{p}_\mu$, $f_\mu \ll \hat{p}_\mu$, and $f_\mu \ll p$. In Figure \ref{fig:root_pos}, we illustrate one possibility for the largest roots of these polynomials.

%These interlacing relations hold even when $\mu = \frac{\lambda_1+\lambda_k}{2}$, as then $\hat{p}_\mu$ will have one less root and its leading coefficient will be negative ($\mu^2 \geq \lambda_1\lambda_k$).

\begin{figure}[h] 

\begin{tikzpicture}
\draw[latex-latex] (-3.5,0) -- (3.5,0) ; %edit here for the axis
 % edit here for the vertical lines
\draw[shift={(-2,0)},color=black] (0pt,3pt) -- (0pt,-3pt);
\draw[shift={(-2,0)},color=black] (0pt,0pt) -- (0pt,-3pt) node[below] 
{$\lambda_k$};

\node[] at (-2,.4) {$\ast$};
\draw[shift={(2,0)},color=black] (0pt,3pt) -- (0pt,-3pt);
\draw[shift={(2,0)},color=black] (0pt,0pt) -- (0pt,-3pt) node[below] 
{$\lambda_1$};
\node[] at (2,.4) {$k-1$};

\draw[latex-latex] (-3.5,-1.5) -- (3.5,-1.5) ; %edit here for the axis
 % edit here for the vertical lines
\draw[shift={(-2,-1.5)},color=black] (0pt,3pt) -- (0pt,-3pt);
\draw[shift={(-2,-1.5)},color=black] (0pt,0pt) -- (0pt,-3pt) node[below] 
{$\lambda_k$};
\node[] at (-2,-1.1) {$\ast-1$};

\draw[shift={(.5,-1.5)},color=black] (0pt,3pt) -- (0pt,-3pt);
\draw[shift={(.5,-1.5)},color=black] (0pt,0pt) -- (0pt,-3pt) node[below] 
{$\mu$};
\node[] at (.5,-1.1) {$2$};

\draw[shift={(2,-1.5)},color=black] (0pt,3pt) -- (0pt,-3pt);
\draw[shift={(2,-1.5)},color=black] (0pt,0pt) -- (0pt,-3pt) node[below] 
{$\lambda_1$};
\node[] at (2,-1.1) {$k-2$};
% \draw[*-o] (0.92,0) -- (2.08,0);
% \draw[latex-latex] (-3.5,-1.5) -- (3.5,-1.5) ;\draw[very thick] (0.92,0) -- (1.92,0);
\draw[latex-latex] (-3.5,-3) -- (3.5,-3) ;

\draw[shift={(-2,-3)},color=black] (0pt,3pt) -- (0pt,-3pt);
\draw[shift={(-2,-3)},color=black] (0pt,0pt) -- (0pt,-3pt) node[below] 
{$\lambda_k$};
\node[] at (-2,-2.6) {$\ast-1$};

\draw[shift={(3,-3)},color=black] (0pt,3pt) -- (0pt,-3pt);
\draw[shift={(3,-3)},color=black] (0pt,0pt) -- (0pt,-3pt) node[below] 
{$\rho$};
\node[] at (3,-2.6) {$1$};

\draw[shift={(2,-3)},color=black] (0pt,3pt) -- (0pt,-3pt);
\draw[shift={(2,-3)},color=black] (0pt,0pt) -- (0pt,-3pt) node[below] 
{$\lambda_1$};
\node[] at (2,-2.6) {$k-2$};

\draw[latex-latex] (-3.5,-4.5) -- (3.5,-4.5) ; 

\draw[shift={(-2,-4.5)},color=black] (0pt,3pt) -- (0pt,-3pt);
\draw[shift={(-2,-4.5)},color=black] (0pt,0pt) -- (0pt,-3pt) node[below] 
{$\lambda_k$};
\node[] at (-2,-4.1) {$\ast-1$};

\draw[shift={(.5,-4.5)},color=black] (0pt,3pt) -- (0pt,-3pt);
\draw[shift={(.5,-4.5)},color=black] (0pt,0pt) -- (0pt,-3pt) node[below] 
{$\mu$};
\node[] at (.5,-4.1) {$1$};

\draw[shift={(2,-4.5)},color=black] (0pt,3pt) -- (0pt,-3pt);
\draw[shift={(2,-4.5)},color=black] (0pt,0pt) -- (0pt,-3pt) node[below] 
{$\lambda_1$};
\node[] at (2,-4.1) {$k-2$};

\node[] at (5,0) {$p$};
\node[] at (5,-1.5) {$\tilde{p}_\mu$};
\node[] at (5,-3) {$\hat{p}_\mu$};
\node[] at (5,-4.5) {$f_\mu$};

\end{tikzpicture}
\centering
\caption{Above is an illustration of larger roots of the described polynomials. The labels of the roots are below the number line, while their respective multiplicity is above. $\ast$ is the multiplicity of $\lambda_k$.} \label{fig:root_pos}
\end{figure}
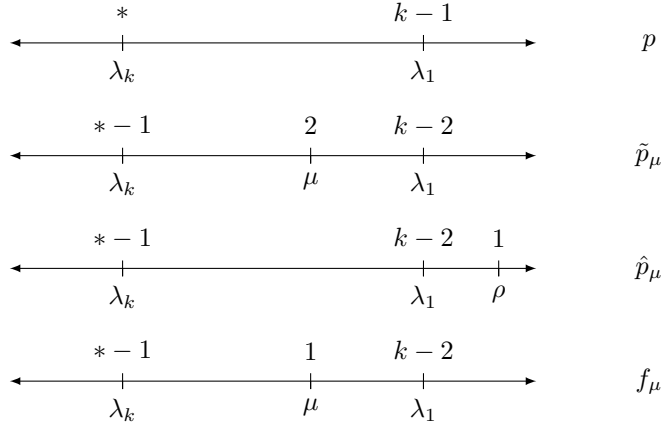

In what follows, we additionally fix a real-rooted $r \in \R[x]$ of degree $n$.

\begin{lemma} \label{lem:ordering}
    Fix any $\mu,\mu'$ with $\mu_0 \leq \mu \leq \mu' \leq \mu_1$ where $\mu_0,\mu_1$ are defined as above. We have:
    \[
        \lambda_1(\tilde{p}_{\mu_0} \boxplus^n r) \leq \lambda_1(p \boxplus^n r) \leq \lambda_1(\tilde{p}_{\mu_1} \boxplus^n r)
    \]
    \[
        \lambda_1(\tilde{p}_\mu \boxplus^n r) \leq \lambda_1(\tilde{p}_{\mu'} \boxplus^n r)
    \]
    \end{lemma}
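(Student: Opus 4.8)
The plan is to reduce everything to a single monotonicity statement: the map $\mu \mapsto \lambda_1(\tilde{p}_\mu \boxplus^n r)$ is non-decreasing on $[\mu_0,\mu_1]$, together with the identification of the two endpoints with $p \boxplus^n r$ (in the sense of the largest root). Once we have monotonicity, both displayed chains of inequalities follow: the second chain is immediate, and the first follows by combining monotonicity with the endpoint comparisons. So the real content is (a) relating $\tilde{p}_{\mu_1} \boxplus^n r$ and $\tilde{p}_{\mu_0} \boxplus^n r$ to $p \boxplus^n r$, and (b) proving monotonicity in $\mu$.

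For (a): by construction $\tilde{p}_{\mu_1}(x) = (x-\lambda_1)^2\prod_{i\neq 1,k}(x-\lambda_i)$, which is obtained from $p(x)=\prod_i(x-\lambda_i)$ by replacing the root $\lambda_k$ with the larger value $\lambda_1$; hence $\lambda(p) \prec \lambda(\tilde p_{\mu_1})$ is false, but what we actually want is just the top-root inequality $\lambda_1(p\boxplus^n r)\le\lambda_1(\tilde p_{\mu_1}\boxplus^n r)$. Since $\tilde p_{\mu_1}$ has coordinate-wise larger roots than $p$ (each $\lambda_i$ of $p$ is $\le$ the corresponding root of $\tilde p_{\mu_1}$ after sorting), shift-invariance plus a term-by-term comparison, or an interlacing argument via Lemma~\ref{lem:interlacing} applied to a chain of single-root moves, gives the inequality; monotonicity of $\lambda_1(\cdot\boxplus^n r)$ under coordinate-wise increase of roots is the clean way to phrase it. For the lower endpoint, $\tilde p_{\mu_0}(x) = (x-\mu_0)^2\prod_{i\neq 1,k}(x-\lambda_i)$ has top roots $\le$ those of $p$ (it replaces $\lambda_1$ and $\lambda_k$ by two copies of their average $\mu_0$), so the same monotonicity principle gives $\lambda_1(\tilde p_{\mu_0}\boxplus^n r)\le\lambda_1(p\boxplus^n r)$. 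Thus (a) is really just: convolving with $r$ and taking the largest root is monotone under majorization-type / coordinate-wise moves of the roots of the first argument, which follows from Lemma~\ref{lem:interlacing} (write the move as a limit/sequence of elementary steps, each producing an interlacing pair, and use that $q\ll p \Rightarrow \lambda_1(q\boxplus^n r)\le\lambda_1(p\boxplus^n r)$ since $q\boxplus^n r \ll p\boxplus^n r$).

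For (b), the monotonicity in $\mu$: fix $\mu\le\mu'$ in $[\mu_0,\mu_1]$. The key observation is that $\tilde p_{\mu'}$ is obtained from $\tilde p_\mu$ by moving the double root at $\mu$ out to $\mu'$ — equivalently, $\tilde p_\mu$ and $\tilde p_{\mu'}$ share the common interlacer $f_\mu(x)=(x-\mu)\prod_{i\neq 1,k}(x-\lambda_i)$ only at $\mu=\mu'$, so one should instead argue incrementally. Concretely, for $\mu\le\mu'$ one checks that $g(x):=(x-\mu')\prod_{i\neq 1,k}(x-\lambda_i)$ satisfies $g\ll\tilde p_\mu$ and $g\ll\tilde p_{\mu'}$ — the first because moving a single root of a double root at $\mu$ to the right to $\mu'$ keeps interlacing as long as $\mu'\le\lambda_1=\mu_1$, the second is the already-noted $f_{\mu'}\ll\tilde p_{\mu'}$ relabeled. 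Hence every positive combination $a\tilde p_\mu + b\tilde p_{\mu'}$ ($a,b\ge0$) is real-rooted, i.e. $\tilde p_\mu$ and $\tilde p_{\mu'}$ interlace; and since $\tilde p_{\mu'}-\tilde p_\mu = (\text{positive multiple of }(x-\text{something}))\prod_{i\neq1,k}(x-\lambda_i)$ has positive leading behavior pushing the top root up, we get $\tilde p_\mu \ll \tilde p_{\mu'}$ with the correct orientation, so $\lambda_1(\tilde p_\mu)\le\lambda_1(\tilde p_{\mu'})$ and more importantly $\tilde p_\mu\boxplus^n r \ll \tilde p_{\mu'}\boxplus^n r$ by Lemma~\ref{lem:interlacing}, yielding $\lambda_1(\tilde p_\mu\boxplus^n r)\le\lambda_1(\tilde p_{\mu'}\boxplus^n r)$.

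The main obstacle I anticipate is getting the \emph{orientation} of the interlacing right throughout — i.e. being sure that in each elementary step we have $q\ll p$ (so that $\lambda_1(q\boxplus^n r)\le\lambda_1(p\boxplus^n r)$) rather than $p\ll q$, since Lemma~\ref{lem:interlacing} preserves the relation but the wrong orientation gives the reverse inequality. This requires carefully tracking, via the explicit root positions in Figure~\ref{fig:root_pos} and the computation $\rho\ge\lambda_1\iff(\mu-\lambda_1)^2\ge0$, that every move under consideration (both the endpoint comparisons in (a) and the incremental $\mu\to\mu'$ step in (b)) pushes the largest root monotonically in the claimed direction. The degeneracy at $\mu=\mu_0$, where $\hat p_{\mu_0}$ drops degree and changes leading sign, needs a separate remark but does not affect $\tilde p_{\mu_0}$ itself, which remains a genuine degree-$n$ monic polynomial, so the convolution $\tilde p_{\mu_0}\boxplus^n r$ and its largest root are well-defined and the endpoint inequality goes through by continuity from $\mu>\mu_0$.
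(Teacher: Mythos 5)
Your overall plan (monotonicity of $\mu\mapsto\lambda_1(\tilde p_\mu\boxplus^n r)$ plus endpoint comparisons) matches the paper, and the upper-endpoint step ($p\ll\tilde p_{\mu_1}$ via a single root moving right, then Lemma~\ref{lem:interlacing}) is correct. But there are two genuine gaps.

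First, the lower endpoint. The passage from $p$ to $\tilde p_{\mu_0}$ is a \emph{pinch}: $\lambda_1$ moves down to $\mu_0$ while $\lambda_k$ moves \emph{up} to $\mu_0$. This is not a coordinate-wise monotone move and cannot be decomposed into elementary steps that all produce $q\ll p$ in the same orientation. Any decomposition has one step that decreases $\lambda_1(\cdot\boxplus^n r)$ (lowering $\lambda_1$) and one step that increases it (raising $\lambda_k$), so Lemma~\ref{lem:interlacing} gives no net control. The paper instead observes $\lambda(\tilde p_{\mu_0})\prec\lambda(p)$ (property (4) of Definition~\ref{def:maj}) and invokes the strictly stronger majorization-preservation Corollary~\ref{cor:maj_preservation}; something of that strength is genuinely required here.

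Second, the monotonicity step. Your claimed common interlacer $g=f_{\mu'}=(x-\mu')\prod_{i\neq1,k}(x-\lambda_i)$ does \emph{not} satisfy $g\ll\tilde p_\mu$. Concretely, take $k=2$ (the generic case): then $\lambda_1(\tilde p_\mu)=\mu$ but $\lambda_1(g)=\mu'>\mu$, violating the first inequality $\lambda_1(\tilde p_\mu)\ge\lambda_1(g)$ demanded by $g\ll\tilde p_\mu$. In fact $\tilde p_\mu$ and $\tilde p_{\mu'}$ do not have a degree-$(n-1)$ common interlacer in general; the paper instead sandwiches them through the degree-$n$ polynomial $g(x)=(x-\mu)(x-\mu')\prod_{i\neq1,k}(x-\lambda_i)$, getting the chain $\tilde p_\mu\ll g\ll\tilde p_{\mu'}$ (each is a single root of a double root sliding right, so genuinely interlaces), and then applies Lemma~\ref{lem:interlacing} twice. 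Replacing your $g$ by this one, and replacing the pinch argument by Corollary~\ref{cor:maj_preservation}, repairs the proof.
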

\begin{proof}
    The first inequality of the first line follows from the fact that the roots of $\tilde{p}_{\mu_0}$ are majorized by that of $p$ (this is because $\tilde{p}_{\mu_0}$ can obtained via a ``pinch'' of the roots of $p$; see property 4 of Definition \ref{def:maj}). The second inequality of the first line follows from Lemma \ref{lem:interlacing} and the fact that $p \ll \tilde{p}_{\mu_1}$. The second line follows from Lemma \ref{lem:interlacing} and the fact that $\tilde{p}_\mu \ll g \ll \tilde{p}_{\mu'}$ for $g(x) := (x-\mu)(x-\mu') \prod_{i \neq 1,k} (x-\lambda_i)$.
\end{proof}

\begin{corollary}
    There exists $\mu \in [\mu_0,\mu_1]$ such that $\lambda_1(\tilde{p}_\mu \boxplus r) = \lambda_1(p \boxplus r)$.
\end{corollary}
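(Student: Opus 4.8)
The plan is to apply the Intermediate Value Theorem to the function $g : [\mu_0,\mu_1] \to \R$ defined by $g(\mu) := \lambda_1(\tilde{p}_\mu \boxplus^n r)$. Lemma \ref{lem:ordering} already supplies the two ingredients needed beyond continuity: first, $g$ is monotone non-decreasing on $[\mu_0,\mu_1]$; and second, $g(\mu_0) = \lambda_1(\tilde{p}_{\mu_0} \boxplus^n r) \leq \lambda_1(p \boxplus^n r) \leq \lambda_1(\tilde{p}_{\mu_1} \boxplus^n r) = g(\mu_1)$. So once $g$ is known to be continuous, the value $\lambda_1(p \boxplus^n r)$ lies between $g(\mu_0)$ and $g(\mu_1)$ and hence is attained at some $\mu \in [\mu_0,\mu_1]$.

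Thus the only real work is continuity of $g$. First I would note that $\tilde{p}_\mu(x) = (x-\mu)^2 \prod_{i \neq 1,k}(x-\lambda_i)$ is monic of degree $n$ with coefficients that are polynomials (hence continuous functions) of $\mu$; since $\boxplus^n$ is bilinear, the coefficients of $\tilde{p}_\mu \boxplus^n r$ are likewise polynomial in $\mu$. Next I would verify that the degree of $\tilde{p}_\mu \boxplus^n r$ is exactly $n$ for every $\mu$: writing $\tilde{p}_\mu \boxplus^n r = r \boxplus^n \tilde{p}_\mu$ and extracting the coefficient of $x^n$ from the defining sum $\frac{1}{n!}\sum_{k=0}^n D^k r(x)\, D^{n-k}\tilde{p}_\mu(0)$, only the $k=0$ term contributes and it gives leading coefficient equal to that of $r$, a nonzero constant independent of $\mu$. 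Consequently $\tilde{p}_\mu \boxplus^n r$ is, up to this fixed positive scalar, a monic real-rooted polynomial (real-rootedness by Proposition \ref{prop:add_props}(5)) whose coefficients vary continuously with $\mu$, so by the standard continuity of roots in the coefficients its largest root $g(\mu)$ depends continuously on $\mu$.

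Putting these together, $g$ is a continuous non-decreasing function on $[\mu_0,\mu_1]$ with $g(\mu_0) \leq \lambda_1(p \boxplus^n r) \leq g(\mu_1)$, so the Intermediate Value Theorem yields $\mu \in [\mu_0,\mu_1]$ with $g(\mu) = \lambda_1(p \boxplus^n r)$, which is exactly the claim. I expect the only place the argument can go wrong, and hence the one point deserving genuine care, to be the degree count: one must rule out the possibility that $g$ jumps because $\tilde{p}_\mu \boxplus^n r$ drops in degree for some $\mu$, and the leading-coefficient computation above is what closes that gap. Everything else is immediate from Lemma \ref{lem:ordering}.
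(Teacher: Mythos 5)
Your argument is correct and is exactly the approach the paper takes; the paper's own proof of this corollary is simply the phrase ``The above lemma and continuity,'' and your write-up fills in that continuity step (including the degree-stability check that keeps the leading coefficient of $\tilde{p}_\mu \boxplus^n r$ fixed) carefully and accurately.
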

\begin{proof}
    The above lemma and continuity.
\end{proof}

Now, let $\mu_*$ denote the maximal $\mu \in [\mu_0,\mu_1]$ such that the previous corollary holds. For simplicity, we will denote $\tilde{p} := \tilde{p}_{\mu_*}$ and $\hat{p} := \hat{p}_{\mu_*}$.

\begin{proposition}
    For $\mu_*$ defined as above, we have that $\mu_* > \mu_0$ and:
    \[
        \lambda_1(\hat{p} \boxplus^n r) = \lambda_1(p \boxplus^n r) = \lambda_1(\tilde{p} \boxplus^n r)
    \]
\end{proposition}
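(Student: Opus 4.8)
The plan is to exploit the decomposition $p = \tilde p + \hat p$ together with the derivative/interlacing structure set up above. Since $\boxplus^n$ is bilinear, we have
\[
    p \boxplus^n r = \tilde p \boxplus^n r + \hat p \boxplus^n r .
\]
Recall that $f_{\mu_*} \ll \tilde p$, $f_{\mu_*} \ll \hat p$ and $f_{\mu_*} \ll p$, so by Lemma~\ref{lem:interlacing} the three polynomials $\tilde p \boxplus^n r$, $\hat p \boxplus^n r$, $p \boxplus^n r$ all have $f_{\mu_*} \boxplus^n r$ as a common interlacer; in particular all three are real-rooted and positive linear combinations of the first two stay real-rooted. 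The key point is that $\lambda_1(\tilde p \boxplus^n r) = \lambda_1(p\boxplus^n r) =: \Lambda$ by the choice of $\mu_*$, and I want to deduce $\lambda_1(\hat p \boxplus^n r) = \Lambda$ as well, together with $\mu_* > \mu_0$.

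First I would handle $\mu_* > \mu_0$. If $\mu_* = \mu_0$, then $\tilde p = \tilde p_{\mu_0}$ is obtained from $p$ by a strict pinch, so $\lambda(\tilde p_{\mu_0})$ is strictly majorized by $\lambda(p)$ and by Lemma~\ref{lem:ordering} combined with the strictness one expects $\lambda_1(\tilde p_{\mu_0}\boxplus^n r) < \lambda_1(p \boxplus^n r)$ unless something degenerate happens; more robustly, since $\mu \mapsto \lambda_1(\tilde p_\mu \boxplus^n r)$ is continuous and nondecreasing (Lemma~\ref{lem:ordering}) and equals $\Lambda$ at $\mu = \mu_1$ is an upper bound, the maximal $\mu$ achieving $\Lambda$ is at least the value forced by the pinch argument; I would argue $\tilde p_{\mu_0}\boxplus^n r$ cannot already reach $\Lambda$ because its largest root is controlled by the majorization corollary (Corollary~\ref{cor:maj_preservation}) giving $\lambda_1(\tilde p_{\mu_0}\boxplus^n r) \le \lambda_1(p\boxplus^n r)$ with the inequality strict unless the pinched pair did not move the top root, which is exactly the excluded case. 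So $\mu_* > \mu_0$, and hence $\hat p$ has degree $n-1$ with positive leading coefficient and its extra root $\rho \ge \lambda_1$.

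Now for the equality of all three largest roots. Evaluate the bilinear identity at $x = \Lambda$. Since $f_{\mu_*} \boxplus^n r$ interlaces each of $\tilde p \boxplus^n r$, $\hat p \boxplus^n r$, $p \boxplus^n r$ and all have positive leading coefficients, each of these polynomials is nonnegative on $[\Lambda,\infty)$; moreover $(\tilde p \boxplus^n r)(\Lambda) = 0$ and $(p\boxplus^n r)(\Lambda) = 0$ because $\Lambda$ is their largest root. The identity $p\boxplus^n r = \tilde p\boxplus^n r + \hat p\boxplus^n r$ then forces $(\hat p\boxplus^n r)(\Lambda) = 0$, so $\Lambda$ is a root of $\hat p\boxplus^n r$. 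It remains to rule out that $\hat p\boxplus^n r$ has a root strictly larger than $\Lambda$; but $\lambda_1(\hat p\boxplus^n r) \le \lambda_1(\hat p) + \lambda_1(r)$ is not by itself enough, so instead I would use that $\hat p \ll p$ when $\mu_* > \mu_0$ (indeed $p = \tilde p + \hat p$ with $f_{\mu_*}$ a common interlacer forces $\hat p \ll p$, as the extra root $\rho$ of $\hat p$ satisfies $\rho \ge \lambda_1 = \lambda_1(p)$ only if $\rho = \lambda_1$ at the boundary, and for $\mu_* > \mu_0$ one checks $\lambda_1(\hat p) \le \lambda_1(p)$ in fact — wait, $\rho \ge \lambda_1$; so instead $p \ll \hat p$ fails and I should use $f_{\mu_*} \ll \hat p$ directly). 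Concretely: since $f_{\mu_*} \ll \hat p$, Lemma~\ref{lem:interlacing} gives $f_{\mu_*}\boxplus^n r \ll \hat p\boxplus^n r$, and similarly $f_{\mu_*}\boxplus^n r \ll p\boxplus^n r$; combining with nonnegativity of $f_{\mu_*}\boxplus^n r$ on $(\lambda_1(f_{\mu_*}\boxplus^n r),\infty) \supseteq (\Lambda,\infty)$ and the fact that $\tilde p\boxplus^n r > 0$ strictly on $(\Lambda,\infty)$ (its largest root being exactly $\Lambda$), the identity shows $\hat p\boxplus^n r = (p - \tilde p)\boxplus^n r < 0$ would be needed for a root above $\Lambda$, contradicting nonnegativity. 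Hence $\lambda_1(\hat p\boxplus^n r) = \Lambda$.

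The main obstacle is the strictness argument that pins down $\mu_* > \mu_0$: one must show the pinch at $\mu_0$ genuinely decreases $\lambda_1(\tilde p_\mu \boxplus^n r)$ below $\lambda_1(p\boxplus^n r)$, which requires ruling out a degenerate coincidence where moving the top root of $p$ inward leaves the top root of the convolution with $r$ unchanged; I would handle this via strict majorization (Corollary~\ref{cor:maj_preservation} applied carefully, noting that the pinch strictly majorizes unless $\lambda_1 = \lambda_k$, which is excluded since $k$ is chosen minimal with $\lambda_1 \ne \lambda_k$) together with the monotonicity in Lemma~\ref{lem:ordering}. The rest is bookkeeping with the bilinearity of $\boxplus^n$ and the common-interlacer sign information.
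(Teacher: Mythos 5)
Your outline of the easy direction is right: by bilinearity $\hat p \boxplus^n r = p\boxplus^n r - \tilde p\boxplus^n r$ vanishes at $\Lambda := \lambda_1(p\boxplus^n r)$, so $\lambda_1(\hat p\boxplus^n r)\ge\Lambda$, and the real work is the reverse inequality together with $\mu_*>\mu_0$. But both of your remaining steps have real gaps, and the paper's proof handles them by an argument you did not find.

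For $\mu_*>\mu_0$: you appeal to a ``strict'' version of Corollary~\ref{cor:maj_preservation}. That corollary only gives the weak relation $\lambda(\tilde p_{\mu_0}\boxplus^n r)\prec\lambda(p\boxplus^n r)$, hence $\lambda_1(\tilde p_{\mu_0}\boxplus^n r)\le\lambda_1(p\boxplus^n r)$; strictness of the top sum is \emph{not} a consequence of strict majorization of the inputs, and no such strict version appears in the paper. You flag this yourself as ``the main obstacle,'' but ``applied carefully'' is not a proof, and in fact $\mu_*=\mu_0$ is not excluded by a pinch-strictness argument at all (note $\mu_*$ is defined as the \emph{maximal} $\mu$ achieving equality; $\mu_*=\mu_0$ can happen only in the degenerate situation where $\lambda_1(\tilde p_\mu\boxplus^n r)$ jumps strictly above $\Lambda$ for every $\mu>\mu_0$). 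The paper rules this out by a completely different mechanism: it tracks $\hat p_\mu\boxplus^n r$ as $\mu\to\mu_0^+$, observes its leading coefficient (positive, degree $n-1$) tends to zero while its largest root stays bounded above, forcing a root to escape to $-\infty$, which pins the sign of the next-to-leading coefficient and contradicts the fact that $\hat p_{\mu_0}\boxplus^n r$ has degree $n-2$ with negative leading coefficient. You should look for an argument of this type rather than hoping for strict majorization.

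For $\lambda_1(\hat p\boxplus^n r)\le\Lambda$: your assertion that $\hat p\boxplus^n r$ is ``nonnegative on $[\Lambda,\infty)$'' is circular --- that is essentially what you are trying to prove. The common-interlacer fact $f_{\mu_*}\boxplus^n r\ll\hat p\boxplus^n r$ gives at most one root of $\hat p\boxplus^n r$ strictly above $\lambda_1(f_{\mu_*}\boxplus^n r)$, and you already know $\Lambda$ is a root; but this only finishes if $\Lambda>\lambda_1(f_{\mu_*}\boxplus^n r)$, and you have not ruled out equality. The paper avoids exactly this boundary case by working at $\mu>\mu_*$, where Lemma~\ref{lem:ordering} gives $\lambda_1(\tilde p_\mu\boxplus^n r)>\Lambda$ and hence $\hat p_\mu\boxplus^n r>0$ at that point (a genuine strict inequality to leverage), then concluding $\lambda_1(\hat p_\mu\boxplus^n r)<\lambda_1(\tilde p_\mu\boxplus^n r)$ and letting $\mu\to\mu_*^+$. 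The paper also handles $\mu_*=\mu_1$ separately via $\hat p\ll p$, which your argument does not address. In short: the decomposition and interlacing toolkit you chose is the right one, but the crucial limit-and-sign bookkeeping that makes the two hard steps go through is missing.
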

\begin{proof}
    The second equality follows from the definition of $\mu_*$. So we only need to prove the first equality. By linearity $\hat{p} \boxplus^n r$ has a root at $\lambda_1(p \boxplus^n r)$, and so $\lambda_1(\hat{p} \boxplus r) \geq \lambda_1(p \boxplus r)$. So in fact we only need to show that $\lambda_1(\hat{p} \boxplus r) \leq \lambda_1(p \boxplus r)$.
    
    If $\mu_* = \mu_1$, then $\lambda_1(\hat{p}) = \lambda_1(p)$ and $\hat{p} \ll p$. This implies $\lambda_1(\hat{p} \boxplus r) \leq \lambda_1(p \boxplus r)$. Otherwise $\mu_0 \leq \mu_* < \mu_1$. Then for $\mu > \mu_*$, we have $\lambda_1(\tilde{p}_\mu \boxplus r) > \lambda_1(p \boxplus r)$ by Lemma \ref{lem:ordering} which implies $\hat{p}_\mu \boxplus r > 0$ at $\lambda_1(\tilde{p}_\mu \boxplus r)$. Recalling the definition of $f_\mu$ above, $f_\mu \ll \tilde{p}_\mu$ implies $\lambda_1(f_\mu \boxplus r) \leq \lambda_1(\tilde{p}_\mu \boxplus r)$, and $f_\mu \ll \hat{p}_\mu$ implies $\hat{p}_\mu \boxplus r$ has at most one root greater than $\lambda_1(f_\mu \boxplus r)$. Combining all this with the fact that $\hat{p}_\mu \boxplus r$ has positive leading coefficient gives $\lambda_1(\hat{p}_\mu \boxplus r) < \lambda_1(\tilde{p}_\mu \boxplus r)$. Limiting $\mu \to \mu_*$ from above then implies $\lambda_1(\hat{p} \boxplus r) \leq \lambda_1(\tilde{p} \boxplus r) = \lambda_1(p \boxplus r)$.
    
    Now suppose that $\mu_* = \mu_0$, so as to get a contradiction. As $\mu \to \mu_*$ from above, $\hat{p}_\mu \boxplus r$ has positive leading coefficient limiting to zero. So $\hat{p} \boxplus r$ then has one less root, and has negative leading coefficient as discussed above. However, since $\lambda_1(\hat{p}_\mu \boxplus r) < \lambda_1(\tilde{p}_\mu \boxplus r) \leq \lambda_1(\tilde{p}_{\mu_1} \boxplus r)$ for all $\mu > \mu_*$ (as noted earlier in this proof), $\hat{p}_\mu \boxplus r$ must have a root limiting to $-\infty$ as $\mu \to \mu_*$. Therefore the second-from-leading coefficient of $\hat{p}_\mu \boxplus r$ (the sum of negated roots scaled by the leading coefficient) is eventually non-negative as $\mu \to \mu_*$. This contradicts the fact that $\hat{p} \boxplus r$ has negative leading coefficient. (Note that this crucially uses the fact that $\mu_*$ is maximal.)
\end{proof}

The next lemma provides the base case to a more streamlined induction for the proof. In fact, it may even lead to a proof of some sort of majorization relation.

\begin{definition}
    For real-rooted $p \in \R_n[x]$ not necessarily of degree $n$, let $\lambda^n(p) \in \R^n$ be the list of roots of $p$, padded with the mean of the roots, and then ordered in non-increasing order.
\end{definition}

\begin{lemma}
    Fix real-rooted $p,q,r \in \R_n[x]$ such that $\deg(q) = \deg(r) = n$ and $\deg(p) = 1$. Then:
    \[
        \lambda^n(p \boxplus^n q \boxplus^n r) + \lambda^n(r) \prec \lambda^n(p \boxplus^n r) + \lambda^n(q \boxplus^n r)
    \]
\end{lemma}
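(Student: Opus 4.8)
The plan is to exploit the fact that $\deg p = 1$ makes the outer convolution essentially a translation, which collapses the claim onto the majorization-preservation statement Corollary \ref{cor:maj_preservation}. \textbf{Degree-one convolutions.} Let $\bar q, \bar r$ be the means of the roots of $q,r$. For monic $q$ of degree $n$, a one-line computation from the definition of $\boxplus^n$ --- only the $k=0$ and $k=1$ terms survive, since $D^k x = 0$ for $k\ge 2$ --- gives $x\boxplus^n q = x - \bar q$; equivalently, $x^n$ is the identity element of $\boxplus^n$ and one invokes shift-invariance from Proposition \ref{prop:add_props}. Hence if $p$ has degree $1$ with root $a$, then $p\boxplus^n q$ has degree $1$ with single root $a+\bar q$ (leading coefficients are irrelevant for the root location). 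Since $q\boxplus^n r$ has degree exactly $n$ with mean of roots $\bar q + \bar r$ (the first moment is additive under $\boxplus^n$, immediate from the definition), associativity of $\boxplus^n$ shows $p\boxplus^n q\boxplus^n r$ has degree $1$ with root $a+\bar q+\bar r$. As padding a single-root list with ``the mean of the roots'' merely repeats that root,
\[
    \lambda^n(p\boxplus^n q\boxplus^n r) = (a+\bar q+\bar r)\,\mathbf{1}, \qquad \lambda^n(p\boxplus^n r) = (a+\bar r)\,\mathbf{1},
\]
where $\mathbf{1} = (1,\dots,1)\in\R^n$, whereas $\lambda^n(r) = \lambda(r)$ and $\lambda^n(q\boxplus^n r) = \lambda(q\boxplus^n r)$ since these have degree $n$.

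\textbf{Reduction and the majorization-preservation step.} Substituting into the claimed relation and cancelling the common constant vector $(a+\bar r)\,\mathbf{1}$ from both sides --- valid since $u\prec v$ iff $u+c\,\mathbf{1}\prec v+c\,\mathbf{1}$ --- the lemma becomes equivalent to $\bar q\,\mathbf{1} + \lambda(r) \prec \lambda(q\boxplus^n r)$, with the left-hand vector already in non-increasing order. To prove this, set $q_0 := (x-\bar q)^n$, so $\lambda(q_0) = \bar q\,\mathbf{1}$. The average of the $k$ largest roots of $q$ is at least $\bar q$ for every $k$, with equality at $k=n$, so $\lambda(q_0)\prec\lambda(q)$; moreover $q_0, q, r$ all have degree exactly $n$. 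Corollary \ref{cor:maj_preservation} then gives $\lambda(q_0\boxplus^n r)\prec\lambda(q\boxplus^n r)$. On the other hand, $x^n$ being the identity together with shift-invariance yields $q_0\boxplus^n r = r(x-\bar q)$, whose root vector is $\lambda(r)+\bar q\,\mathbf{1}$. Combining these two facts is exactly the desired relation, which finishes the proof.

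I do not expect a substantive obstacle: the entire content is recognizing the reduction, after which the computations are routine. The two points needing a little care are the ``pad with the mean'' convention --- it is precisely what makes both degree-one terms contribute constant vectors, so that they cancel cleanly --- and the bookkeeping that recasts the claim as an instance of Corollary \ref{cor:maj_preservation} applied to $(x-\bar q)^n \prec q$. As the preceding remark anticipates, the argument in fact proves a majorization statement that has nothing to do with $\deg p = 1$: it is the assertion that $r \mapsto q\boxplus^n r$ spreads the roots of $r$ out by at least the mean of $q$, i.e. $\bar q\,\mathbf{1} + \lambda(r) \prec \lambda(q\boxplus^n r)$ for all real-rooted $q,r$ of degree $n$.
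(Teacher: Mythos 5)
Your argument is correct and is essentially the paper's own proof: both reduce the claim, via additivity of the first moment and explicit evaluation of the degree-one convolution, to the majorization $\bar q\,\mathbf{1} + \lambda(r) \prec \lambda(q \boxplus^n r)$ (the paper phrases this as $\lambda^n(r) \prec \lambda^n(q \boxplus^n r)$ after a WLOG shift centering all roots at zero), and both then obtain it from Corollary \ref{cor:maj_preservation} applied to $(x-\bar q)^n \prec q$ together with $(x-\bar q)^n \boxplus^n r = r(x-\bar q)$. The only difference is that the paper normalizes the means to zero up front whereas you carry them along explicitly, which is purely cosmetic.
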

\begin{proof}
    By shifting, we may assume WLOG that $p,q,r$ all have have roots which sum to 0. Since $\deg(p) = 1$, the result is then equivalent to the following:
    \[
        \lambda^n(D^{n-1}(q \boxplus^n r)) + \lambda^n(r) \prec \lambda^n(D^{n-1}r) + \lambda^n(q \boxplus^n r)
    \]
    Since $\boxplus$ preserves the set of polynomials whose roots sum to 0, this is equivalent to:
    \[
        \lambda^n(r) \prec \lambda^n(q \boxplus^n r)
    \]
    Since $r = x^n \boxplus^n r$ and $\lambda(x^n) \prec \lambda(q)$, the result follows from Corollary \ref{cor:maj_preservation}.
\end{proof}

The following is an immediate corollary of the previous lemma.

\begin{corollary}
    
    Fix real-rooted $p,q,r \in \R_n[x]$ such that $\deg(q) = \deg(r) = n$ and $\deg(p) = 1$. Then:
    \[
        \lambda_1(p \boxplus^n q \boxplus^n r) + \lambda_1(r) \leq \lambda_1(p \boxplus^n r) + \lambda_1(q \boxplus^n r)
    \]
\end{corollary}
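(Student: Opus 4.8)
The plan is to read off the largest coordinate of the majorization relation supplied by the preceding lemma. That lemma gives, under our hypotheses $\deg(q)=\deg(r)=n$ and $\deg(p)=1$,
\[
\lambda^n(p\boxplus^n q\boxplus^n r)+\lambda^n(r)\ \prec\ \lambda^n(p\boxplus^n r)+\lambda^n(q\boxplus^n r),
\]
and by condition (1) of Definition~\ref{def:maj} taken at $k=1$, a relation $y\prec x$ forces $y_1^\downarrow\le x_1^\downarrow$, i.e.\ the maximum entry of the left-hand vector is at most the maximum entry of the right-hand vector.

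So the first step is to compute those maximum entries. For vectors $u,v$ each sorted in non-increasing order one has $u_1+v_1\ge u_i+v_i$ for every $i$, hence $\max(u+v)=u_1+v_1$; applying this to both sides turns the displayed majorization into
\[
\lambda_1^n(p\boxplus^n q\boxplus^n r)+\lambda_1^n(r)\ \le\ \lambda_1^n(p\boxplus^n r)+\lambda_1^n(q\boxplus^n r).
\]
The second step is to drop the superscripts. Since $\lambda^n(s)$ is obtained from the root list of $s$ by padding with the arithmetic mean of those roots, and a mean never exceeds a maximum, the top entry of $\lambda^n(s)$ equals $\lambda_1(s)$ provided $s$ has at least one root. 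Here $r$ and $q\boxplus^n r$ have degree $n$, while $p\boxplus^n q\boxplus^n r$ and $p\boxplus^n r$ have degree $1$ (the finite free convolution of a degree-$1$ polynomial with a degree-$n$ polynomial has degree $1$; e.g.\ use shift-invariance together with the identity $s\boxplus^n x=\tfrac{1}{n!}D^{n-1}s$, and note the leading coefficient does not cancel). Each of the four polynomials being nonconstant and real-rooted, every $\lambda_1^n$ above coincides with the genuine largest root $\lambda_1$, and the inequality becomes precisely the claimed one.

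I do not expect a real obstacle here: the substance is entirely inside the previous lemma, and what remains are two elementary observations — that the maximum is additive on co-sorted vectors, and that mean-padding leaves the maximum coordinate fixed. The only point that warrants a sentence of care is the degree bookkeeping, namely checking that none of the four polynomials collapses to a constant, so that writing $\lambda_1$ is legitimate.
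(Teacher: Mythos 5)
Your proof is correct and is exactly the argument the paper leaves implicit when it calls this ``an immediate corollary of the previous lemma'': apply condition (1) of Definition~\ref{def:maj} at $k=1$, observe that the top coordinate of a sum of two non-increasing vectors is the sum of their top coordinates, and note that mean-padding does not change the maximum coordinate so $\lambda_1^n = \lambda_1$ throughout. Your degree bookkeeping (that $p\boxplus^n r$ and $p\boxplus^n q\boxplus^n r$ have degree exactly $1$, so all four $\lambda_1$'s are well defined) is a worthwhile sanity check, though strictly speaking the padded $\lambda^n$ notation already handles any degree drop.
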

% \begin{proof}
%     Since $\deg(p) = 1$, this is equivalent to the following, where $\alpha_p$ is the root of $p$:
%     \[
%         \maxroot(D^{n-1}(q \boxplus^n r)) + \alpha_p + \maxroot(r) \leq \maxroot(D^{n-1}r) + \alpha_p + \maxroot(q \boxplus^n r)
%     \]
%     Letting $\alpha_f$ denote the average of the roots of a given polynomial $f$ (note that this generalizes the definition of $\alpha_p$), this is equivalent to:
%     \[
%         \alpha_{q \boxplus^n r} + \alpha_p + \maxroot(r) \leq \alpha_r + \alpha_p + \maxroot(q \boxplus^n r)
%     \]
%     Now, $\alpha_{q \boxplus^n r} = \alpha_q + \alpha_r$. Substituting this and simplifying gives:
%     \[
%         \alpha_q + \maxroot(r) \leq \maxroot(q \boxplus^n r)
%     \]
%     Since $\alpha_q + \maxroot(r) = \maxroot((x - \alpha_q)^n \boxplus^n r)$ and $q$ majorizes $(x - \alpha_q)^n$, the result follows.
% \end{proof}

We now prove the main result.

\begin{theorem}
    Fix real-rooted $p,q,r \in \R_n[x]$ such that $\deg(q) = \deg(r) = n$ and $\deg(p) = k \leq n$. Then:
    \[
        \lambda_1(p \boxplus^n q \boxplus^n r) + \lambda_1(r) \leq \lambda_1(p \boxplus^n r) + \lambda_1(q \boxplus^n r)
    \]
\end{theorem}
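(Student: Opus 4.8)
The plan is to induct on $k = \deg(p)$, using the degree-one case (the corollary immediately preceding the theorem) as the base case, together with the machinery of the pinch/interlacing decomposition developed in the lemmas and propositions just above. So assume the result holds whenever the first polynomial has degree less than $k$, and let $p$ have degree $k \geq 2$. We may shift so that $p$ has at least two distinct roots (if $p$ has a single root of multiplicity $k$, then $p = (x-\mu)^k \cdot x^{n-k}$ after shifting... actually more carefully, if all roots of $p$ coincide then $p$ is a shift of $x^k$ times nothing — one handles this degenerate case separately by noting $p \boxplus^n(\cdot)$ is then a scaled iterated derivative composed with a shift, and the inequality reduces to the triangle inequality or a lower-degree instance). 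In the main case $p$ has at least two distinct roots, so the constructions $\tilde p = \tilde p_{\mu_*}$ and $\hat p = \hat p_{\mu_*}$ from the preceding pages are available, and by the Proposition just proved we have $\mu_* > \mu_0$, $\deg(\hat p) = k - 1$ with positive leading coefficient, and
\[
    \lambda_1(\hat p \boxplus^n r) = \lambda_1(p \boxplus^n r) = \lambda_1(\tilde p \boxplus^n r),
\]
and moreover $\tilde p = p - \hat p$ so that $\tilde p \boxplus^n q = p \boxplus^n q - \hat p \boxplus^n q$ by bilinearity.

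The key step is to transfer this decomposition through the convolution with $q$. First I would observe that by Lemma \ref{lem:interlacing} (interlacing is preserved by $\boxplus^n r$ and by $\boxplus^n q$), and the fact that $f_{\mu_*} \ll \tilde p$ and $f_{\mu_*} \ll \hat p$ and $f_{\mu_*} \ll p$, the polynomials $\hat p \boxplus^n q \boxplus^n r$ and $\tilde p \boxplus^n q \boxplus^n r$ both interlace with a common degree-$(n-1)$-ish polynomial, so in particular $\lambda_1(p \boxplus^n q \boxplus^n r) = \max$ controlled by the larger of $\lambda_1(\tilde p \boxplus^n q \boxplus^n r)$ and $\lambda_1(\hat p \boxplus^n q \boxplus^n r)$ — and because $\tilde p \boxplus^n q \boxplus^n r$ has a double-root structure inherited from the $(x-\mu_*)^2$ factor while $\hat p \boxplus^n q \boxplus^n r$ sits "below" via the $f_{\mu_*}$ interlacing, one gets
\[
    \lambda_1(p \boxplus^n q \boxplus^n r) \leq \max\bigl(\lambda_1(\tilde p \boxplus^n q \boxplus^n r),\ \lambda_1(\hat p \boxplus^n q \boxplus^n r)\bigr).
\]
Now $\hat p$ has degree $k-1$, so the inductive hypothesis gives $\lambda_1(\hat p \boxplus^n q \boxplus^n r) + \lambda_1(r) \leq \lambda_1(\hat p \boxplus^n r) + \lambda_1(q \boxplus^n r) = \lambda_1(p \boxplus^n r) + \lambda_1(q \boxplus^n r)$, which is exactly the bound we want. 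So it remains only to handle the $\tilde p$ term.

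For the $\tilde p$ term, the trick is that $\tilde p_{\mu_*}(x) = (x-\mu_*)^2 \prod_{i \neq 1,k}(x - \lambda_i)$ factors, and one writes $\tilde p_{\mu_*} = (x - \mu_*) \cdot g$ where $g(x) = (x-\mu_*)\prod_{i\neq 1,k}(x-\lambda_i)$ has degree $k-1$; but $\boxplus^n$ does not interact multiplicatively with ordinary products, so instead I would use that $(x - \mu_*)^2$-type factors let us write $\tilde p_{\mu_*} \boxplus^n(\text{anything})$ in terms of a polynomial of the form $\ell \boxplus^n(\text{lower degree thing})$ where $\ell$ is linear — concretely, $\tilde p_{\mu_*} = (x-\mu_*) \boxplus^? $ is not literally true, so the honest route is: apply the base-case corollary to the linear polynomial $(x - \mu_*)$ against $q \boxplus^n(\text{the degree }n\text{ polynomial obtained from } g)$... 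The cleanest version, which I expect the authors use, is to note that since $\lambda_1(\tilde p \boxplus^n r) = \lambda_1(p\boxplus^n r)$ and $\tilde p$ has a root of multiplicity two at $\mu_*$, we can peel off one factor of $(x-\mu_*)$: there is a real-rooted degree-$(n)$ polynomial (pad $g$ with the mean of its roots) call it $\tilde q$ with $\tilde p \boxplus^n(\cdot) $ related to $(x - \mu_*)\boxplus^n \tilde q \boxplus^n(\cdot)$ up to iterated derivatives and shifts, reducing the $\tilde p$ estimate to the $\deg = 1$ base case plus the $\deg = k-1$ inductive hypothesis applied to $g$. The main obstacle — and the step I would spend the most care on — is exactly this: making rigorous the claim that $\lambda_1(p \boxplus^n q \boxplus^n r)$ is dominated by the max of the $\tilde p$ and $\hat p$ contributions (an interlacing-of-sums argument that must track leading coefficients and degrees carefully, since $\hat p$ drops a degree), and then the bookkeeping that reduces the $(x-\mu_*)^2$ factor of $\tilde p$ to a linear-degree instance so the base case applies. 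Everything else is a routine combination of Lemma \ref{lem:interlacing}, the Proposition on $\mu_*$, and the inductive hypothesis.
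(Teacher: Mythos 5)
Your setup is right --- induction on $k = \deg(p)$, base case $k=1$, and the pinch construction $\tilde p = \tilde p_{\mu_*}$, $\hat p = \hat p_{\mu_*}$ with the Proposition guaranteeing $\mu_* > \mu_0$ and $\lambda_1(\hat p \boxplus^n r) = \lambda_1(p \boxplus^n r) = \lambda_1(\tilde p \boxplus^n r)$. The inequality you write,
\[
    \lambda_1(p \boxplus^n q \boxplus^n r) \leq \max\bigl(\lambda_1(\tilde p \boxplus^n q \boxplus^n r),\ \lambda_1(\hat p \boxplus^n q \boxplus^n r)\bigr),
\]
is even true (once $\mu_* > \mu_0$ both summands have positive leading coefficient, so their sum cannot have a root beyond the larger of their largest roots). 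But it is not useful in the way you need it to be, and here is the genuine gap: exactly when the theorem is in danger of failing for $p$, the max on the right is attained by the $\tilde p$ term, not the $\hat p$ term, and $\tilde p$ still has degree $k$, so the inductive hypothesis says nothing about it. Your attempt to then peel a $(x - \mu_*)$ factor off $\tilde p$ and reduce to the linear base case cannot be repaired: $\boxplus^n$ is not multiplicative in the way that would allow a factorization of $\tilde p$ to pass through the convolution, and you acknowledge as much. So the argument as written does not close.

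The paper's proof sidesteps this by a variational argument rather than a direct bound. Define $\beta(p) := \lambda_1(p \boxplus^n q \boxplus^n r) + \lambda_1(r) - \lambda_1(p \boxplus^n r) - \lambda_1(q \boxplus^n r)$, restrict to polynomials of degree $k$ with roots in a fixed box $[-R,R]$, and let $p$ be a maximizer (compactness). Assume for contradiction that $\beta(p) > 0$. Applying the inductive hypothesis to $\hat p$ (degree $k-1$), together with $\lambda_1(\hat p \boxplus^n r) = \lambda_1(p \boxplus^n r)$, gives $\lambda_1(\hat p \boxplus^n q \boxplus^n r) \leq \lambda_1(p \boxplus^n q \boxplus^n r) - \beta(p) < \lambda_1(p \boxplus^n q \boxplus^n r)$. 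Since $\hat p$ has positive leading coefficient, $\hat p \boxplus^n q \boxplus^n r$ is strictly positive at $\lambda_1(p \boxplus^n q \boxplus^n r)$, hence $\tilde p \boxplus^n q \boxplus^n r = (p - \hat p) \boxplus^n q \boxplus^n r$ is strictly negative there, hence $\lambda_1(\tilde p \boxplus^n q \boxplus^n r) > \lambda_1(p \boxplus^n q \boxplus^n r)$. Combined with $\lambda_1(\tilde p \boxplus^n r) = \lambda_1(p \boxplus^n r)$, this forces $\beta(\tilde p) > \beta(p)$, and the roots of $\tilde p$ still lie in $[-R,R]$ --- contradicting maximality. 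The point is that one never bounds $\lambda_1(\tilde p \boxplus^n q \boxplus^n r)$ directly; one shows a putative counterexample can always be pinched to a strictly worse counterexample of the same degree in a compact set, which is impossible. That compactness/extremality step is the idea your proposal is missing.
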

\begin{proof}
    We induct on $k$, using the previous corollary as the base case. Let $p$ be a polynomial of degree $k$ with roots in $[-R,R]$ (for any fixed $R$) which maximizes (by compactness):
    \[
        \beta(p) := \lambda_1(p \boxplus^n q \boxplus^n r) + \lambda_1(r) - \lambda_1(p \boxplus^n r) - \lambda_1(q \boxplus^n r)
    \]
    To get a contradiction, we assume $\beta(p) > 0$. In particular this implies $p$ has at least 2 distinct roots, allowing us to apply the above discussion, notation, and results. By induction we have $\beta(\hat{p}) \leq 0$, which implies:
    \[
        \begin{split}
            \lambda_1(\hat{p} \boxplus^n q \boxplus^n r) &\leq \lambda_1(\hat{p} \boxplus^n r) + \lambda_1(q \boxplus^n r) - \lambda_1(r) \\
                &= \lambda_1(p \boxplus^n r) + \lambda_1(q \boxplus^n r) - \lambda_1(r) \\
                &= \lambda_1(p \boxplus^n q \boxplus^n r) - \beta(p)
        \end{split}
    \]
    Since $\mu_* > \mu_0$ by the previous proposition, $\hat{p}$ has positive leading coefficient. This implies $\tilde{p} \boxplus^n q \boxplus^n r = (p - \hat{p}) \boxplus^n q \boxplus^n r < 0$ when evaluated at $\lambda_1(p \boxplus^n q \boxplus^n r)$. Since $\tilde{p}$ has positive leading coefficient, this gives:
    \[
        \beta(\tilde{p}) - \beta(p) = \lambda_1(\tilde{p} \boxplus^n q \boxplus^n r) - \lambda_1(p \boxplus^n q \boxplus^n r) > 0
    \]
    This contradicts the maximality of $\beta(p)$, since all of the roots of $\tilde{p}$ are contained in $[-R,R]$.
\end{proof}

\begin{corollary}
    Fix real-rooted $p,q,r \in \R_n[x]$. If all polynomials involved are of degree at least 1, then:
    \[
        \lambda_1(p \boxplus^n q \boxplus^n r) + \lambda_1(r) \leq \lambda_1(p \boxplus^n r) + \lambda_1(q \boxplus^n r)
    \]
    Note that the following condition is equivalent to the degree restriction:
    \[
        2n < \deg(p) + \deg(q) + \deg(r) \Longleftrightarrow (n-\deg(p)) + (n-\deg(q)) + (n-\deg(r)) < n
    \]
\end{corollary}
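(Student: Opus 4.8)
The plan is to derive the corollary from the preceding theorem by a short perturbation argument that lifts $q$ and $r$ to polynomials of degree exactly $n$ while leaving all the relevant largest roots intact in the limit. For $\epsilon>0$ put
\[
    q_\epsilon(x):=q(x)\,(\epsilon x+1)^{\,n-\deg q},\qquad r_\epsilon(x):=r(x)\,(\epsilon x+1)^{\,n-\deg r}.
\]
Each is real-rooted (a product of real-rooted factors) and of degree exactly $n$, since the extra roots all sit at $-1/\epsilon$; moreover $q_\epsilon\to q$ and $r_\epsilon\to r$ coefficientwise as $\epsilon\to 0^+$, and once $\epsilon$ is small enough that $-1/\epsilon$ lies below every root of $q$ and of $r$ we have $\lambda_1(q_\epsilon)=\lambda_1(q)$ and $\lambda_1(r_\epsilon)=\lambda_1(r)$. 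The hypothesis $2n<\deg p+\deg q+\deg r$ forces $\deg p\ge 1$, so the preceding theorem applies to $(p,q_\epsilon,r_\epsilon)$ and gives, for every $\epsilon>0$,
\[
    \lambda_1(p\boxplus^n q_\epsilon\boxplus^n r_\epsilon)+\lambda_1(r_\epsilon)\le \lambda_1(p\boxplus^n r_\epsilon)+\lambda_1(q_\epsilon\boxplus^n r_\epsilon).
\]
It then remains to let $\epsilon\to 0^+$ and check that each of the four largest-root quantities converges to the corresponding quantity for $(p,q,r)$.

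Since $\boxplus^n$ is bilinear, hence continuous in the coefficients, the polynomials $p\boxplus^n q_\epsilon\boxplus^n r_\epsilon$, $p\boxplus^n r_\epsilon$, and $q_\epsilon\boxplus^n r_\epsilon$ converge coefficientwise to $p\boxplus^n q\boxplus^n r$, $p\boxplus^n r$, and $q\boxplus^n r$. The degree restriction is exactly what guarantees these three limits are nonzero of degree at least $1$: one has $\deg(p\boxplus^n q\boxplus^n r)=\deg p+\deg q+\deg r-2n\ge 1$, and since each of $\deg p,\deg q,\deg r$ is at most $n$ also $\deg(p\boxplus^n r)=\deg p+\deg r-n\ge 1$ and $\deg(q\boxplus^n r)=\deg q+\deg r-n\ge 1$. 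This same elementary bookkeeping is what establishes the asserted equivalence of the degree condition with ``all polynomials involved are of degree at least $1$''.

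The only point requiring care — and the one I expect to be the main obstacle — is that when real-rooted polynomials converge coefficientwise to a real-rooted polynomial of strictly smaller degree, some roots escape to infinity, and if one escaped to $+\infty$ the convergence of $\lambda_1$ would fail. Here this cannot happen, because the triangle inequality of Proposition \ref{prop:add_props}, applied iteratively, bounds all the largest roots in play uniformly from above:
\[
    \lambda_1(p\boxplus^n q_\epsilon\boxplus^n r_\epsilon)\le \lambda_1(p)+\lambda_1(q_\epsilon)+\lambda_1(r_\epsilon)=\lambda_1(p)+\lambda_1(q)+\lambda_1(r),
\]
and likewise for $\lambda_1(p\boxplus^n r_\epsilon)$ and $\lambda_1(q_\epsilon\boxplus^n r_\epsilon)$. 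A coefficientwise-convergent family of real-rooted polynomials whose limit has degree $\ge 1$ and whose largest roots are bounded above must have all its escaping roots running off to $-\infty$ only; hence for small $\epsilon$ the largest root is one of the finitely many roots that converge to the roots of the limit, so $\lambda_1$ of each family converges to $\lambda_1$ of its limit. Since also $\lambda_1(r_\epsilon)=\lambda_1(r)$ for small $\epsilon$, letting $\epsilon\to 0^+$ in the displayed inequality yields the claimed bound.
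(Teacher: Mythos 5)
Your proof is correct and follows the same route as the paper: approximate $q$ and $r$ by degree-$n$ real-rooted polynomials with extra roots pushed to $-\infty$, apply the theorem to the approximants, and pass to the limit, using a uniform upper bound on the relevant largest roots to justify that $\lambda_1$ is continuous in this limit. The paper's one-line proof invokes Lemma \ref{lem:interlacing} (interlacing under $\boxplus^n$) to obtain that uniform upper bound, whereas you use the triangle inequality from Proposition \ref{prop:add_props}; your choice is arguably cleaner here, since with repeated extra roots at $-1/\epsilon$ the interlacing relation $q_{\epsilon'} \ll q_{\epsilon}$ need not hold, while the triangle inequality gives the needed bound $\lambda_1(p)+\lambda_1(q)+\lambda_1(r)$ directly and uniformly in $\epsilon$. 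The degree bookkeeping (that $2n<\deg p+\deg q+\deg r$ forces each of $p$, $q$, $r$, $p\boxplus^n r$, $q\boxplus^n r$, and $p\boxplus^n q\boxplus^n r$ to have degree at least $1$, using $\deg p,\deg q,\deg r\le n$) is correct, and this is exactly what makes $\lambda_1$ well-defined and the coefficientwise limits nondegenerate.
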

\begin{proof}
    Consider polynomials of degree $n$ whose roots limit to the roots of $p,q,r$ and extra roots limit to $-\infty$. The previous theorem and continuity (and use of Lemma \ref{lem:interlacing} to bound the largest roots away from $+\infty$) then imply the result.
\end{proof}

\section*{Conclusion}

Despite its connections to important problems like the paving conjecture and the entropy conjecture, it is still not fully understood how the additive convolution affects the roots of real-rooted polynomials. In \cite{finiteconvolutions}, Marcus, Spielman, and Srivastava began the study of root movement by investigating the effect of differential operators of the form $I - \alpha D$ on the largest root. In this paper we extended their result to all differential operators which preserve real-rootedness. This extension alone doesn't have any immediate applications we are aware of. 

The resolution of Horn's conjecture by Knutson and Tao (see \cite{knutson2001honeycombs}) gave a full characterization of the eigenvalues of the sum of two Hermitian matrices. We were able to obtain Horn's inequalities for the additive convolution as well via hyperbolicity, but understanding the full effect of the additive convolution on roots remains a mystery. The entropy conjecture, which quantifies the effect of the additive convolution on the discriminant of a polynomial, is one approach to understanding the effect of the roots holistically. Our submodular majorization (and generalized Horn's inequalities) conjectures provide another insight into the workings of the inner roots. Because submodularity is unique to the additive convolution, we believe it will require a new framework (beyond traditional hyperbolicity tools) to tackle these conjectures.

Another possible future direction is extending submodularity results to the $b$-additive convolution, in which derivatives are replaced by certain finite differences. Such convolutions have an intimate connection to the \emph{mesh} of a real-rooted polynomial, which is the minimal distance between any two roots (e.g., see \cite{finitemesh} and \cite{leake2017connecting}). In our testing we found several submodularity relations among such $b$-additive convolutions. The additive convolution can be obtained by limiting $b \to 0$, and so any results for the $b$-additive convolution are strictly stronger than the conjectures in this paper. The advantage of trying to prove these statements in the finite difference case comes in the limited structures available: fewer operations interact nicely with the mesh of a polynomial compared to those operations which preserve real-rootedness, and this may better direct the study of the roots.

Finally in the multivariate realm, little is known. And, many of the natural extensions of these results seem to fail in the multivariate case. The state of the art in this direction is currently the ad hoc barrier function arguments used by MSS in their resolution of Kadison-Singer. That said, an important next step for their work is to encapsulate their techniques in a more coherent theory. We believe that our results and conjectures are a step in the right direction.

\bibliographystyle{amsalpha}
\bibliography{main}

% \[p \boxtimes^n r = \frac{1}{n!}\sum_{\sigma \in \text{S}_n} \prod_i (x - a_i b_{\sigma(i)})\]
% Since $\binom{n}{k}_q q^{\binom{k}{2}} = \sum_{S \subset [n], |S| = k} q^{\Sigma S}$ where $\Sigma S$ is the sum of the elements in $S$.
% \begin{align*}p \boxtimes^n_q r &= \sum_{k=0}^n \binom{n}{k}_q^{-1} (-1)^k q^{-\binom{k}{2}}p_k r_k x^k\\
% &=  \sum_{k=0}^n \left(\sum_{S \subset [n], |S| = k} q^{\Sigma S} \right)^{-1} \left( \sum_{S \subset [n], |S| = n-k} a_k \right) \left( \sum_{S \subset [n], |S| = n-k} b_k \right) (-x)^k\\
% \end{align*}

% \begin{align*}p \boxtimes^n_q r &= \frac{1}{(n)_q!}\sum_{k=0}^n  (k)_q!(n-k)_q!  q^{-\binom{k}{2}}p_k r_k (-x)^k\\ &=\frac{1}{(n)_q!}\sum_{k=0}^n  (k)_{q^{-1}}!(n-k)_q!  p_k r_k (-x)^k\\
% &=\frac{1}{(n)_q!}\sum_{k=0}^n  (k)_{q^{-1}}!(n-k)_q! e_{n-k}(\mathbf{a}) e_{n-k}(\mathbf{b}) (-x)^k\\
% \end{align*}

\end{document}